\let\expandafter\oldproof\csname\string\proof\endcsname
\let\oldendproof\endproof
\renewenvironment{proof}[1][\proofname]{%
	\oldproof[\bf #1]%
}{\oldendproof}
\theoremstyle{plain}
\newtheorem{theorem}{Theorem}[section]
\newtheorem{lemma}[theorem]{Lemma}
\newtheorem{claim}[theorem]{Claim}
\newtheorem{proposition}[theorem]{Proposition}
\newtheorem{corollary}[theorem]{Corollary}
\newtheorem{problem}[theorem]{Problem}
\theoremstyle{definition}
\newtheorem{remark}[theorem]{Remark}
\newtheorem{definition}[theorem]{Definition}
\newcommand{\Bin}{\ensuremath{\textrm{Bin}}}
\newcommand{\eps}{\varepsilon}
\definecolor{RED}{rgb}{1,0,0}\definecolor{BLUE}{rgb}{0,0,1} 
\newcommand{\ex}{\text{ex}}
\title{\vspace{-0.9cm}Induced subgraphs of $K_r$-free graphs and the Erd\H os--Rogers problem}
\author{
Lior Gishboliner\thanks{Department of Mathematics, University of Toronto, Canada. Email: lior.gishboliner@utoronto.ca}
\and 
Oliver Janzer\thanks{Department of Pure Mathematics and Mathematical Statistics, University of Cambridge, United Kingdom. Research supported by a fellowship at Trinity College. Email: oj224@cam.ac.uk}
\and 
Benny Sudakov\thanks{Department of Mathematics, ETH Z\"urich, Switzerland. Email: benjamin.sudakov@math.ethz.ch. Research supported in part by SNSF grant 200021-228014.}
}
\date{}
\begin{document}

\maketitle

\begin{abstract}
    For two graphs $F,H$ and a positive integer $n$, the function $f_{F,H}(n)$ denotes the largest $m$ such that every $H$-free graph on $n$ vertices contains an $F$-free induced subgraph on $m$ vertices. This function has been extensively studied in the last 60 years when $F$ and $H$ are cliques and became known as the Erd\H os--Rogers function. Recently, Balogh, Chen and Luo, and Mubayi and Verstra\"ete initiated the systematic study of this function in the case where $F$ is a general graph.
    
    Answering, in a strong form, a question of Mubayi and Verstra\"ete, we prove that for every positive integer $r$ and every $K_{r-1}$-free graph $F$, there exists some $\eps_F>0$ such that $f_{F,K_r}(n)=O(n^{1/2-\eps_F})$. This result is tight in two ways. Firstly, it is no longer true if $F$ contains $K_{r-1}$ as a subgraph. Secondly, we show that for all $r\geq 4$ and $\eps>0$, there exists a $K_{r-1}$-free graph $F$ for which $f_{F,K_r}(n)=\Omega(n^{1/2-\eps})$. Along the way of proving this, we show in particular that for every graph $F$ with minimum degree $t$, we have $f_{F,K_4}(n)=\Omega(n^{1/2-6/\sqrt{t}})$. This answers (in a strong form) another question of Mubayi and Verstra\"ete. Finally, we prove that there exist absolute constants $0<c<C$ such that for each $r\geq 4$, if $F$ is a bipartite graph with sufficiently large minimum degree, then $\Omega(n^{\frac{c}{\log r}})\leq f_{F,K_r}(n)\leq O(n^{\frac{C}{\log r}})$. This shows that for graphs $F$ with large minimum degree, the behaviour of $f_{F,K_r}(n)$ is drastically different from that of the corresponding off-diagonal Ramsey number $f_{K_2,K_r}(n)$.
\end{abstract}

\section{Introduction}

The Ramsey number $R(r,t)$ is the smallest $n$ such that every $n$-vertex graph contains a clique of size~$r$ or an independent set of size $t$. The study of this function is one of the most important problems in discrete mathematics. The instances that have  received the most attention are the ``diagonal case'' concerning $r=t$, and the case where $r$ is fixed and $t\rightarrow \infty$ (which is often called the ``off-diagonal case''). In this paper we will focus on the latter.

The first bound on this function was obtained by Erd\H os and Szekeres \cite{ESz35} in 1935, who proved that $R(r,t)=O(t^{r-1})$ for any fixed $r$ and $t\rightarrow \infty$.
Despite extensive research on the topic, the only (non-trivial) off-diagonal Ramsey number whose order of magnitude is known is $R(3,t)$. It was shown by Kim \cite{Kim95} in 1995 that $R(3,t)=\Omega(t^2/\log t)$, which matches an earlier upper bound by Ajtai, Koml\'os and Szemer\'edi \cite{AKSz80}.
Recently, a major breakthrough was obtained by Mattheus and Verstra\"ete \cite{Mattheus_Verstraete}, who proved that $R(4,t)\geq \Omega(t^{3}/(\log t)^4)$, matching the best known upper bound up to a polylogarithmic factor. Nevertheless, the problem of estimating $R(r,t)$ remains wide open for all $r\geq 5$, with the best bounds being $$c_1(r)\frac{t^{\frac{r+1}{2}}}{(\log t)^{\frac{r+1}{2}-\frac{1}{r-2}}}\leq R(r,t)\leq c_2(r) \frac{t^{r-1}}{(\log t)^{r-2}},$$
due to Bohman and Keevash \cite{BK10} and Ajtai, Koml\'os and Szemer\'edi \cite{AKSz80}, respectively.

In 1962, Erd\H os and Rogers \cite{ER62} considered the following generalization of the off-diagonal Ramsey problem. For positive integers $2\leq s<r$ and $n$, let $f_{s,r}(n)$ denote the largest $m$ such that every $K_r$-free graph on $n$ vertices contains a $K_s$-free induced subgraph on $m$ vertices. Note that the Ramsey problem is recovered as the special case $s=2$. The function $f_{s,r}$ has since become known as the Erd\H os--Rogers function and has attracted an extensive amount of research over the last 60 years (see, e.g. \cite{ER62,BH91,Kri94,Kri95,AK97,Sudakov,Sudakov_DRC,DR11,Dudek_Rodl,Wolfovitz,DM14,Dudek_Retter_Rodl,Gowers_Janzer,Janzer_Sudakov,MV_improved_bound}). 

In the last decade or so, there has been major progress towards finding the value of $f_{r-1,r}(n)$. Building on earlier work of Dudek and R\"odl \cite{DR11}, Wolfovitz \cite{Wolfovitz} proved that $f_{3,4}(n)\leq n^{1/2+o(1)}$, matching the easy lower bound $f_{3,4}(n)\geq n^{1/2}$ up to the $o(1)$ term. Later, it was shown by Dudek, Retter and R\"odl \cite{Dudek_Retter_Rodl} that for all $r\geq 4$, we have $f_{r-1,r}(n)\leq n^{1/2+o(1)}$, which is again tight up to the $o(1)$ term. Very recently, the upper bound was improved by Mubayi and Verstra\"ete \cite{MV_improved_bound}, who showed that $f_{r-1,r}(n)= O(n^{1/2}\log n)$, coming close to the best known lower bound $f_{r-1,r}(n)=\Omega\left(\frac{n^{1/2}(\log n)^{1/2}}{(\log \log n)^{1/2}}\right)$, observed in \cite{DM14}.

While the $s=r-1$ case is more or less settled, the next case $s=r-2$ is already open in general. For $r=4$, this problem is equivalent to determining the Ramsey numbers $R(4,k)$, and it follows from the recent breakthrough of Mattheus and Verstra\"ete \cite{Mattheus_Verstraete} that $f_{2,4}(n)\leq n^{1/3+o(1)}$, which is tight. Janzer and Sudakov \cite{Janzer_Sudakov} generalized this upper bound by proving that $f_{r-2,r}(n)\leq n^{\frac{1}{2}-\frac{1}{8r-26}+o(1)}$ holds for all $r\geq 4$. It is unknown whether this is tight for $r>4$; the best lower bound is $f_{r-2,r}(n)\geq n^{\frac{1}{2}-\frac{1}{6r-18}+o(1)}$, due to Sudakov \cite{Sudakov_DRC}.

Recently, Balogh, Chen and Luo \cite{Balogh_Chen_Luo} and Mubayi and Verstra\"ete \cite{MV_general_graphs} initiated a systematic study of the following generalization of the classical Erd\H os--Rogers function (see also \cite{HL24} for an earlier paper in this direction). For graphs $F$ and $H$ and a positive integer $n$, we write $f_{F,H}(n)$ for the largest $m$ such that every $H$-free graph on $n$ vertices contains an $F$-free induced subgraph on $m$ vertices. (Here $H$-free and $F$-free mean that they do not contain $H$ or $F$ as a not necessarily induced subgraph.) Both \cite{Balogh_Chen_Luo} and \cite{MV_general_graphs} are in fact mainly concerned with the case where $H$ is a clique, and this will be the focus of our paper as well. Note that this problem still closely resembles the original Ramsey problem; the only difference is that we are looking for a large $F$-free induced subgraph rather than a large independent set.

Among other results, Mubayi and Verstra\"ete \cite {MV_general_graphs} proved that for every (non-empty) triangle-free graph $F$, we have $f_{F,K_3}(n)=n^{1/2+o(1)}$, thereby resolving the case where $H$ is the triangle. Regarding the next case, namely that of $H=K_4$, they posed the following problem.

\begin{problem}[Mubayi--Verstra\"ete \cite{MV_general_graphs}] \label{problem:trianglefree vs K4}
    Is it true that for every triangle-free graph $F$, there exists $\eps=\eps_F>0$ such that $f_{F,K_4}(n)=O(n^{1/2-\eps})$?
\end{problem}

Our first result is an affirmative answer to Problem \ref{problem:trianglefree vs K4} in a more general form.

\begin{theorem} \label{thm:cliquefree vs clique}
    For every $r\geq 4$ and every $K_{r-1}$-free graph $F$, there exists $\eps=\eps_F>0$ such that $f_{F,K_r}(n)=O(n^{1/2-\eps})$.
\end{theorem}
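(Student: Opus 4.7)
The plan is to construct a $K_r$-free graph $G$ on $n$ vertices such that every induced subgraph on $n^{1/2-\eps_F}$ vertices contains $F$, for some $\eps_F>0$ depending on $F$. I would take $G$ to be a suitable pseudorandom $K_r$-free graph of edge density $p=n^{-\alpha}$, with $\alpha>0$ chosen in terms of the parameters of $F$. Natural candidates include standard algebraic pseudorandom $K_r$-free constructions, a random $K_r$-free graph obtained via edge deletions from $G(n,p)$, or an iterated/random blow-up of a smaller pseudorandom $K_r$-free graph; the density $p$ should be pushed as close as possible to the maximum density compatible with $K_r$-freeness.

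Given any $W\subseteq V(G)$ with $|W|=n^{1/2-\eps_F}$, I would iteratively extract common neighborhoods inside $W$: pick $v_1\in W$ with $|N(v_1)\cap W|=\Omega(p|W|)$, set $W_1=N(v_1)\cap W$, then pick $v_2\in W_1$ with $|N(v_2)\cap W_1|=\Omega(p|W_1|)$, set $W_2=N(v_2)\cap W_1$, and so on. After $k$ steps, $W_k$ lies in the common neighborhood of a $K_k\subseteq G$, so $G[W_k]$ is $K_{r-k}$-free, it inherits pseudorandom properties from $G$, and its size is $\Omega(p^k|W|)$. Since $F$ is $K_{r-1}$-free, it is also $K_{r-k}$-free for every $k\le r-1$, so once $W_k$ is large enough, embedding $F$ into $G[W_k]$ reduces to embedding a $K_{r-k}$-free graph into a pseudorandom $K_{r-k}$-free graph; this can be carried out by supersaturation/counting for small $F$, by dependent random choice for bipartite $F$, or by passing to a suitable multipartite substructure indexed by a proper colouring of $F$ in general.

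The main obstacle is the quantitative balancing of $p$, the number of iterations $k$, and the embedding parameters. Pseudorandom $K_r$-free graphs have edge density at most roughly $n^{-1/(r-1)}$, so after $k$ neighborhood passes the set $W_k$ has shrunk by a factor of $p^k$ and may be too small to host $F$; at the same time, the pseudorandom control must survive the iterated restriction so that $G[W_k]$ really does look like a typical $K_{r-k}$-free graph. The $K_{r-1}$-freeness of $F$ is essential: the moment $F\supseteq K_{r-1}$, the embedding into a $K_{r-1}$-free subgraph of $G$ fails, which is precisely why the classical bound $f_{K_{r-1},K_r}(n)=n^{1/2+o(1)}$ cannot be broken, and one should therefore expect $\eps_F$ to tend to $0$ as $F$ becomes larger or denser. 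Making the trade-off sharp enough to yield a positive $\eps_F$ for every $K_{r-1}$-free $F$ is the technical heart of the argument and may require a case split between bipartite and non-bipartite $F$ together with tighter pseudorandomness than any off-the-shelf construction directly provides.
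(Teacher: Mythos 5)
Your plan has a basic logical error that undermines the iterated neighbourhood scheme. You write that since $F$ is $K_{r-1}$-free, ``it is also $K_{r-k}$-free for every $k\le r-1$''. This is backwards: $K_{r-1}$-free implies $K_m$-free only for $m\ge r-1$. In general a $K_{r-1}$-free graph $F$ can contain $K_{r-2}$ (and indeed must, in the hardest instances of the theorem, e.g.\ complete $(r-2)$-partite $F$), so the moment you pass to $W_2$, which lies in the common neighbourhood of an edge and hence is $K_{r-2}$-free, it \emph{cannot} contain $F$. Your iteration therefore cannot go past $k=1$ for general $K_{r-1}$-free $F$, which leaves you with the unresolved problem of forcing a copy of $F$ inside a $K_{r-1}$-free set of size $\approx p\,|W|$, precisely the kind of embedding your sketch defers to ``supersaturation/dependent random choice/multipartite structure'' without an argument. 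There is no reason a generic pseudorandom $K_{r-1}$-free graph contains $F$ in every large subset; the existence of a suitable pseudorandom construction with that property is exactly what needs to be proved, not assumed, and no ``off-the-shelf'' construction supplies it.

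The paper takes a different route that side-steps the embedding difficulty entirely: copies of $F$ are built into the host graph by design. Starting from the algebraic $C_4$-free, $K_4$-subdivision-free bipartite incidence graph $K$ on $X\cup Y$ (Proposition~\ref{prop:algebraic graph}), a graph $H$ on $X$ is formed by, for each $y\in Y$, randomly partitioning $N_K(y)$ into $s=|V(F)|$ parts and placing a blow-up of $F$ there. $K_r$-freeness of $H$ follows from the clique-partition lemma (Lemma~\ref{lem:clique partition}) together with the structural properties of $K$. Then, rather than embedding $F$, the paper \emph{counts} the $F$-free $t$-subsets of $X$ using the hypergraph container method (Lemmas~\ref{lem:bounded degree}--\ref{cor:few sets}), and finishes with a random sparsification of $X$ so that with positive probability no $F$-free set of the target size survives (Lemma~\ref{lem:few F-free} and the proof of Theorem~\ref{thm:cliquefree vs clique precise}). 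The container-plus-sparsification strategy is the genuine engine here; the analytic role played in your sketch by ``pseudorandomness'' is instead done by the explicit incidence structure of $K$ and the randomized $F$-blow-ups.
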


The assumption that $F$ is $K_{r-1}$-free is necessary since if $F$ contains $K_{r-1}$, then $f_{F,K_r}(n)\geq f_{K_{r-1},K_r}(n)\geq n^{1/2+o(1)}$ for any $r$. Mubayi and Verstra\"ete also conjectured that the $1/2$ in the exponent in Problem \ref{problem:trianglefree vs K4} cannot be replaced by anything smaller, and that this is witnessed by taking $F=K_{t,t}$ for large enough $t$.

\begin{problem}[Mubayi--Verstra\"ete \cite{MV_general_graphs}] \label{problem:Ktt vs K4}
    Prove (or disprove) that for each $\eps>0$ there exists $t$ such that $f_{K_{t,t},K_4}(n)=\Omega(n^{1/2-\eps})$.
\end{problem}

We prove that this is indeed the case in the following more general form.

\begin{theorem} \label{thm:Ktt vs K4}
    For every $t$ and every graph $F$ with minimum degree $t$, $f_{F,K_4}(n)=\Omega(n^{1/2-6/\sqrt{t}})$.
\end{theorem}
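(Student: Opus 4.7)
The plan is to reduce to finding a large induced subgraph of small maximum degree, and then to construct such a subgraph by combining the classical Erdős--Rogers argument with an iterative random-sampling procedure. The reduction is immediate: since $F$ has minimum degree $t$, any induced subgraph $H \subseteq G$ with $\Delta(H) < t$ is automatically $F$-free, because any embedding of $F$ into $H$ would force some vertex of $H$ to have degree at least $t$. Hence it suffices to show that every $K_4$-free graph $G$ on $n$ vertices contains an induced subgraph of maximum degree less than $t$ and size at least $\Omega(n^{1/2 - 6/\sqrt t})$.

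For the construction, the plan is to first apply the classical bound $f_{3,4}(n) \geq n^{1/2}$ to extract a triangle-free induced subgraph $H_0 \subseteq G$ of size at least $cn^{1/2}$: either some vertex of $G$ has degree at least $n^{1/2}$, in which case its neighborhood is triangle-free since $G$ is $K_4$-free; or a greedy argument produces an independent set of size $n^{1/2}$. Working inside $H_0$, I would then iteratively reduce the maximum degree over $\Theta(\sqrt{t})$ stages. At each stage, given the current triangle-free subgraph $H_i$ of order $m_i$ and maximum degree $\Delta_i$, I would sample each vertex of $H_i$ independently with a carefully chosen probability $p_i < 1$ and delete the (rare) vertices that end up with $\geq t$ neighbors in the sample, where a Chernoff bound shows these are few; I pass to the resulting $H_{i+1}$ of order roughly $p_i m_i$ with max-degree reduced accordingly. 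Whenever sampling alone is insufficient, I would instead pass to the neighborhood of a highest-degree vertex in $H_i$: inside a triangle-free graph, this is an independent set, and hence has max-degree $0$. With the probabilities $p_i$ tuned so that the cumulative size loss across all $\Theta(\sqrt{t})$ stages is bounded by $n^{6/\sqrt{t}}$, the final subgraph has size at least $n^{1/2 - 6/\sqrt{t}}$ and maximum degree less than $t$, giving the desired $F$-free induced subgraph.

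The main obstacle is the quantitative bookkeeping across these iterations. The sampling probabilities and the criterion distinguishing between the two kinds of moves at each stage must be chosen so that (i) Chernoff concentration is strong enough for the alteration step to succeed via a union bound, (ii) the total multiplicative size loss across $\Theta(\sqrt{t})$ stages stays within $n^{6/\sqrt{t}}$, and (iii) the maximum degree decreases by a sufficient polynomial factor at each stage, so that after $\Theta(\sqrt{t})$ stages the max-degree falls below $t$. Verifying these conditions simultaneously, and in particular showing that the interleaving of sampling and neighborhood-passing moves can be orchestrated to give the target exponent $1/2 - 6/\sqrt{t}$ throughout the entire range of $t$, is the delicate technical heart of the argument.
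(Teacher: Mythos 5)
Your first step --- reducing the theorem to finding a large induced subgraph of maximum degree less than $t$ --- is where the argument breaks, and it cannot be repaired within this framework. Any set $S$ for which $G[S]$ has maximum degree less than $t$ is $t$-colourable, so $\alpha(G) \geq |S|/t$. But by the construction behind the Mattheus--Verstra\"ete bound $R(4,k) = \Omega(k^3/(\log k)^4)$, there exist $K_4$-free graphs $G$ on $n$ vertices with $\alpha(G) = n^{1/3+o(1)}$; in such graphs every induced subgraph of maximum degree less than $t$ has at most $t\cdot n^{1/3+o(1)}$ vertices. As soon as $t$ is large enough that $\frac{1}{2} - \frac{6}{\sqrt t} > \frac{1}{3}$ (that is, $t > 1296$), this is asymptotically smaller than the target $n^{1/2-6/\sqrt t}$, so the reduction fails in exactly the regime that matters (Problem~\ref{problem:Ktt vs K4} asks about $\eps \to 0$). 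The point is that bounded maximum degree is a far stronger requirement than $F$-freeness: a $C_4$-free graph on $m$ vertices can have maximum degree $\Theta(\sqrt m)$ while containing no $K_{2,2}$, let alone $K_{t,t}$. Passing first to a triangle-free subgraph on $\sqrt n$ vertices only compounds the loss, since the independence number of that host is at most $\sqrt n$.

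The paper avoids this trap by using a much weaker consequence of minimum degree $t$: $F$ has a small $s$-dominating set $A$ (of size at most $\delta\cdot v(F)$, Lemma~\ref{lem:domination}), meaning every vertex of $F$ outside $A$ has at least $s$ neighbours in $A$. The key Lemma~\ref{claim:sampling} is a sampling-and-codegree argument: if $\alpha_F(G)$ were below the threshold, a random vertex subset $U$ would have bounded $s$-wise codegrees, forcing few copies of $F$ in $U$ (one only needs to place the small set $A$ freely; every other vertex of $F$ is then pinned to $O(1)$ choices); a further subsample then yields a large $F$-free set, a contradiction. This shows there are many $s$-sets with common neighbourhood of size at least $n^{1/2}$, and an averaging argument over edges and their $K_{r-2}$-free (hence $F$-free) common neighbourhoods finishes the proof (Theorem~\ref{thm:F vs Kr domination}). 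Nowhere does the argument need the final set to have bounded degree; it controls $F$-copies directly via the dominating set.
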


In fact, using the same method, we prove that our Theorem \ref{thm:cliquefree vs clique} is tight for all $r$.

\begin{theorem} \label{thm:F vs Kr}
    For every $r\geq 4$ and $\eps>0$ there is a $K_{r-1}$-free graph $F$ such that $f_{F,K_r}(n)=\Omega(n^{1/2-\eps})$.
\end{theorem}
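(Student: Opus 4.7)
The plan is to establish a common generalization of Theorem~\ref{thm:cliquefree vs clique} and Theorem~\ref{thm:Ktt vs K4}: for every $r\geq 4$ and every $K_{r-1}$-free graph $F$ with minimum degree at least $t$, we have $f_{F,K_r}(n) = \Omega(n^{1/2 - g(r,t)})$, where $g(r,t) \to 0$ as $t\to\infty$ for each fixed $r$. Theorem~\ref{thm:F vs Kr} will then be immediate: given $r$ and $\eps$, choose $t=t(r,\eps)$ large enough so that $g(r,t) < \eps$, and let $F$ be any $K_{r-1}$-free graph of minimum degree at least $t$. For concreteness, one can take $F$ to be the balanced complete $(r-2)$-partite graph $K_{t,t,\dots,t}$, which has minimum degree $(r-3)t$ and is $K_{r-1}$-free.

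To prove the generalization, I would follow the same probabilistic/random-subset strategy used for Theorem~\ref{thm:Ktt vs K4}. Let $G$ be a $K_r$-free graph on $n$ vertices and let $m = \Theta(n^{1/2 - g(r,t)})$. Sampling a uniformly random set $U \subseteq V(G)$ of size $m$, the expected number of copies of $F$ in $G[U]$ is at most $N(G,F) \cdot (m/n)^{|V(F)|}$, where $N(G,F)$ denotes the total number of copies of $F$ in $G$. Thus it suffices to prove the counting bound $N(G,F) \leq n^{|V(F)|(1/2+o_t(1))}$, with the $o_t(1)$ tending to $0$ as $t\to\infty$ for each fixed $r$.

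To obtain this counting bound, I would exploit the $K_r$-freeness of $G$ via the structural fact that for any clique of size $j \leq r-1$ in $G$, the common neighborhood of the clique is $K_{r-j}$-free; in particular the common neighborhood of any $K_{r-2}$ in $G$ is triangle-free, and of any $K_{r-1}$ is an independent set. One then fixes an appropriate ordering of $V(F)$ and embeds its vertices into $G$ one at a time. The minimum-degree hypothesis guarantees that each new vertex of $F$ has many previously embedded neighbors, and the $K_r$-free assumption on $G$ forces the intersection of the corresponding neighborhoods in $G$ to be sparse whenever those embedded neighbors span a large clique in~$G$. A Hölder-type averaging over all orderings—analogous to what drives the $6/\sqrt{t}$ loss in Theorem~\ref{thm:Ktt vs K4}—should yield $g(r,t) = O_r(1/\sqrt{t})$, which is more than enough for Theorem~\ref{thm:F vs Kr}.

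The main obstacle is the bookkeeping in the greedy embedding argument: one has to orchestrate the ordering of $V(F)$ and the application of codegree/Kővári–Sós–Turán-type bounds so that at a sufficiently large fraction of embedding steps we are taking intersections of neighborhoods in $G$ of sets which already span a clique of size close to $r-1$. This is where the clique number $r$ enters the loss $C_r/\sqrt{t}$, and generalizing the $r=4$ argument should require only quantitative, rather than conceptual, adjustments, since the relevant structural property (common neighborhoods of cliques being $K_{r-j}$-free) degrades in a uniform way as $r$ grows.
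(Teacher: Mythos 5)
Your stated ``common generalization''---that $f_{F,K_r}(n) = \Omega(n^{1/2-g(r,t)})$, with $g(r,t) \to 0$ as $t\to\infty$ for each fixed $r$, holds for \emph{every} $K_{r-1}$-free $F$ of minimum degree at least $t$---is false. Take $F = K_{t,t}$, which is triangle-free and hence $K_{r-1}$-free for every $r\geq 4$. By Theorem~\ref{thm:Ktt vs Kr upper bound}, $f_{K_{t,t},K_r}(n) = O(n^{C/\log r})$ for an absolute constant $C$, \emph{independent of $t$}; for any fixed $r$ with $\log r > 2C$ this exponent is strictly below $1/2$, so no amount of increasing $t$ can produce $\Omega(n^{1/2-g(r,t)})$. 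This divergence between bipartite $F$ and $F$ which contains $K_{r-2}$ is precisely the contrast the paper draws between Theorems~\ref{thm:Ktt vs K4},~\ref{thm:F vs Kr} and Theorems~\ref{thm:Ktt vs Kr lower bound},~\ref{thm:Ktt vs Kr upper bound}.

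The hypothesis you are missing is that $F$ contains a copy of $K_{r-2}$. With that added, the lower bound does hold with $g(r,t) = 6/\sqrt{t}$ (Theorem~\ref{thm:F vs Kr main}), and your chosen example $F = K_{t,\dots,t}$ with $r-2$ parts does satisfy it, so the deduction of Theorem~\ref{thm:F vs Kr} is immediate. For $r = 4$ the condition $K_{r-2}=K_2\subseteq F$ is automatic, which is why Theorem~\ref{thm:Ktt vs K4} carries no such clause; but for $r\geq 5$ it is indispensable. Its structural role, which you gesture at but do not place correctly, is this: in a $K_r$-free graph $G$, the common neighbourhood of any edge is $K_{r-2}$-free and therefore $F$-free, so it has size at most $\alpha_F(G)$. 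This is the lever in Theorem~\ref{thm:F vs Kr domination} that converts an assumed bound on $\alpha_F(G)$ into local sparsity, which via Lemma~\ref{claim:sampling} produces a large independent set, a contradiction.

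Separately, the reduction to a naked counting bound $N(G,F) \leq n^{v(F)(1/2+o_t(1))}$ in a $K_r$-free $G$ has no chance as stated: the Tur\'an graph $T(n,r-1)$ is $K_r$-free and contains $\Theta(n^{v(F)})$ copies of $F = K_{t,\dots,t}$. You must work under the contradiction hypothesis $\alpha_F(G) < n^{1/2-2\delta}$; but even then a routine supersaturation argument forces $N(G,F) \gtrsim (n/\alpha_F(G))^{v(F)} \geq n^{v(F)(1/2+2\delta)}$, so the global count is too large to close the sampling argument by a margin. The paper avoids relying on it: Lemma~\ref{claim:sampling} first samples a set $U$ in which no $s$-set has a large common neighbourhood, uses the $s$-domination set $A$ of $F$ (Lemma~\ref{lem:domination}) to show $U$ contains few copies of $F$, and only then samples again to destroy them. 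To make your one-shot embedding argument rigorous you would need to reproduce this two-stage mechanism or an equivalent one, and you would need the $K_{r-2}\subseteq F$ hypothesis to enter exactly where it does in Theorem~\ref{thm:F vs Kr domination}.
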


As a corollary of Theorem \ref{thm:Ktt vs K4}, we obtain the following result about graphs with large Tur\'an number.

\begin{corollary} \label{cor:large Turan}
    For every $\eps>0$ there exists some $\delta>0$ such that if a bipartite graph $F$ satisfies $\ex(m,F)=\Omega(m^{2-\delta})$, then $f_{F,K_4}(n)=\Omega(n^{1/2-\eps})$.
\end{corollary}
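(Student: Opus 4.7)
My plan is to reduce Corollary \ref{cor:large Turan} to Theorem \ref{thm:Ktt vs K4} by extracting from $F$ a subgraph with large minimum degree. Given $\eps>0$, I will first fix an integer $t$ large enough that $6/\sqrt{t}\le\eps$. The key external ingredient I need is a degeneracy-based Turán bound of Alon, Krivelevich and Sudakov: every $r$-degenerate bipartite graph $H$ satisfies $\ex(m,H)=O(m^{2-\gamma(r)})$ for some $\gamma(r)>0$ (in fact one may take $\gamma(r)=1/(4r)$). Applied with $r=t-1$, this produces a threshold $\delta=\delta(\eps)>0$ such that every $(t-1)$-degenerate bipartite graph $H$ has $\ex(m,H)=o(m^{2-\delta})$.

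Now suppose $F$ is bipartite with $\ex(m,F)=\Omega(m^{2-\delta})$. By the bound above, $F$ cannot be $(t-1)$-degenerate, so iteratively deleting vertices of degree less than $t$ from $F$ must terminate with a non-empty subgraph $F'\subseteq F$ of minimum degree at least $t$. Since $F'$ is a subgraph of $F$ (but we only need its abstract minimum degree to invoke Theorem \ref{thm:Ktt vs K4}), this yields
\[
f_{F',K_4}(n)=\Omega\bigl(n^{1/2-6/\sqrt{t}}\bigr)=\Omega\bigl(n^{1/2-\eps}\bigr).
\]

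To conclude I will use the trivial monotonicity $f_{F,K_4}(n)\ge f_{F',K_4}(n)$, valid whenever $F'\subseteq F$: any copy of $F$ in a graph $G$ contains a copy of $F'$, so every $F'$-free graph is automatically $F$-free, and in particular every $F'$-free induced subgraph is an $F$-free induced subgraph. Combining the two inequalities gives the desired lower bound. There is no serious obstacle in this plan; the only real content is the AKS-style degeneracy bound on $\ex(m,H)$, which must be quoted with the correct quantitative form so that $\delta$ can be chosen strictly below the exponent gain, and the rest of the argument (greedy degeneracy removal and the monotonicity observation) is immediate.
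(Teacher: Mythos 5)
Your proof is correct and follows essentially the same route as the paper: invoke the Alon--Krivelevich--Sudakov degeneracy-based Tur\'an bound (Theorem~\ref{thm:degeneracy Turan}) to extract a subgraph $F'\subseteq F$ of minimum degree at least $t$ from the Tur\'an lower bound, apply Theorem~\ref{thm:Ktt vs K4} to $F'$, and conclude via the monotonicity $f_{F,K_4}(n)\ge f_{F',K_4}(n)$. The only difference is presentational (you fix $t$ first and then choose $\delta$, whereas the paper fixes $\delta=\eps^2/180$ at the outset and sets $t=\lfloor 1/(5\delta)\rfloor$), and you make the monotonicity step explicit while the paper leaves it implicit.
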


This shows that for bipartite graphs $F$ with large Tur\'an number, the exponent in $f_{F,K_4}(n)$ is close to $1/2$. This complements a result of Balogh, Chen and Luo \cite{Balogh_Chen_Luo} which states that if $\ex(m,F)=O(m^{1+\alpha})$ for some $\alpha\in [0,1/2)$, then $f_{F,K_4}(n)\leq n^{\frac{1}{3-2\alpha}+o(1)}$.

Motivated by Theorem \ref{thm:Ktt vs K4}, 
it is natural to ask what happens if instead of $K_4$ one considers the case of a general clique $K_r$. Our methods allow us to address this question as well, and we obtain the following rather accurate estimates on $f_{F,K_r}(n)$ when $F$ is a bipartite graph with large minimum degree.

\begin{theorem} \label{thm:Ktt vs Kr lower bound}
    For each $r\geq 4$ and $\varepsilon > 0$ there is $t_0$ such that for every $t \geq t_0$ and every graph $F$ with minimum degree $t$, we have
    $$
    f_{F,K_r}(n) = \Omega(n^{\frac{1}{\lceil \log_2 r \rceil} - \varepsilon}).
    $$
\end{theorem}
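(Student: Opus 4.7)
We plan to prove this by induction on $k:=\lceil\log_2 r\rceil$. The base case $k=2$ (so $r\in\{3,4\}$) is supplied by Theorem~\ref{thm:Ktt vs K4} for $r=4$, and is trivial for $r=3$ since any triangle-free graph has an independent set of size $\Omega(\sqrt{n/\log n})$, which is automatically $F$-free. The key observation used throughout is that $G[S]$ is $F$-free whenever $\Delta(G[S])<t$, since $F$ has minimum degree $t$. Our task thus reduces to finding, in every $K_r$-free graph $G$ on $n$ vertices, an induced subgraph of size at least $n^{1/k-\varepsilon}$ whose maximum degree is strictly less than $t$.

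For the inductive step, fix $k\ge 3$ and set $s:=r-2^{k-1}\in\{1,\ldots,2^{k-1}\}$, so that after taking $s$ nested common neighbourhoods we arrive at a $K_{2^{k-1}}$-free graph, to which the inductive hypothesis applies at level $k-1$. Starting from $G_0:=G$, we greedily choose $v_i$ to be a vertex of (near-)maximum degree in $G_{i-1}:=G[\bigcap_{j<i}N_G(v_j)]$, so that $G_i$ is $K_{r-i}$-free with $|V(G_i)|=\deg_{G_{i-1}}(v_i)$. If at every step $i\le s$ we can maintain $|V(G_i)|\ge |V(G_{i-1})|\cdot n^{-1/(sk)}$, then the Bernoulli-type estimate gives $|V(G_s)|\ge n^{(k-1)/k}$; applying the inductive hypothesis to $G_s$ with a slightly smaller error parameter yields an $F$-free induced subgraph of size at least $|V(G_s)|^{1/(k-1)-\varepsilon/2}\ge n^{1/k-\varepsilon}$, as required.

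When the greedy iteration fails at some step $i\le s$, i.e.\ $\Delta(G_{i-1})<|V(G_{i-1})|\cdot n^{-1/(sk)}$, we short-circuit. In the case $s=1$ (so $r=2^{k-1}+1$), a random vertex subset of $G_{i-1}$ of size $\Theta(t|V(G_{i-1})|/\Delta(G_{i-1}))=\Omega(tn^{1/k})$, followed by a Chernoff-type tail estimate and deletion of any residual vertex of degree at least $t$ (the deletion method being effective precisely because $t$ is large in terms of $\varepsilon$ and $r$), yields an $F$-free induced subgraph of size $\ge n^{1/k-\varepsilon}$. For $s\ge 2$ the pure random-subset bound $tn^{1/(sk)}$ is too weak by itself, and we must instead appeal to the inductive hypothesis applied to $G_{i-1}$ (which is $K_{r-i+1}$-free on $|V(G_{i-1})|\ge n^{(k-1)/k}$ vertices). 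Since $r-i+1>2^{k-1}$ this appeal is at the same level $k$, and so is nested in a subsidiary induction on the vertex count; because $|V(G_{i-1})|$ is at least a constant power of $n$, this subsidiary induction has bounded depth, and the accumulated $\varepsilon$-losses can be controlled by the freedom in $t_0(\varepsilon,r)$.

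The main technical obstacle is the simultaneous calibration of the parameters $s$, the threshold exponent $1/(sk)$, the cascade of error parameters used across the subsidiary induction, and $t_0(\varepsilon,r)$, so that all three possible outcomes---full iteration reaching $G_s$, random short-circuit when $s=1$, and subsidiary inductive appeal when $s\ge 2$---produce an $F$-free induced subgraph of size at least $n^{1/k-\varepsilon}$. The large-$t$ hypothesis is used both to power the Chernoff/deletion estimates in the random short-circuit and to supply the error budget needed to absorb the accumulated $\varepsilon$-losses from the subsidiary induction in the $s\ge 2$ case.
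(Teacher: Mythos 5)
Your proposal has a genuine gap in the $s\ge 2$ failure case, and I don't see a way to repair it within the greedy-common-neighbourhood framework.

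Consider $s\ge 2$ and suppose the greedy fails at step $i$. You then have $|V(G_{i-1})|\ge n^{1-(i-1)/(sk)}$ (and, in the worst case, equality) and $G_{i-1}$ is $K_{r-i+1}$-free with $\lceil\log_2(r-i+1)\rceil=k$. Applying the level-$k$ statement to $G_{i-1}$ (your ``subsidiary induction'') gives an $F$-free set of size at most about $|V(G_{i-1})|^{1/k-\varepsilon'}\le n^{(1-(i-1)/(sk))(1/k-\varepsilon')}\le n^{1/k-(i-1)/(sk^2)}$. The exponent deficit $(i-1)/(sk^2)$ for $i\ge 2$ is a \emph{fixed} positive constant determined entirely by $r$ (via $s$ and $k$); it does not shrink as $t_0\to\infty$, so it cannot be absorbed by the ``$\varepsilon$-budget'' you invoke. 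Thus for any $\varepsilon<(i-1)/(sk^2)$ the argument cannot close. The $i=1$ case is even worse when $s\ge 2$: then $G_{i-1}=G$, so the subsidiary call is circular and makes no progress, while the random-subset fallback (density $\approx t/\Delta$) yields only $\Theta(t\,n^{1/(sk)})\ll n^{1/k}$ vertices. Trying to restart the greedy on $G_{i-1}$ with a recalibrated per-step threshold $1/(s'k)$ doesn't help either: the cumulative loss in the exponent is unchanged, and the random-subset endpoint is still $\Theta(t\,n'^{1/(s'k)})$ whose exponent relative to $n$ is at most $\tfrac{1}{k}-\tfrac{s-1}{sk^2}$.

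The structural reason this fails is that taking common neighbourhoods of single vertices forces a per-step density threshold of order $n^{-1/(sk)}$ (a constant depending only on $r$), and whenever the actual density falls just below this threshold, you neither win from sparsity nor from recursion. The paper escapes this by never reducing to single-vertex common neighbourhoods: Lemma~\ref{claim:sampling} shows (under the contradiction hypothesis that $\alpha_F(G)$ is small) that there is an abundance of $s$-sets with large common neighbourhood, Lemma~\ref{lem:main} converts that into a bipartite pair $U,W$ with density $\ge n^{-\delta^2}$, where $\delta$ depends only on $t$ and can be taken arbitrarily small (in particular $\delta^2\ll 1/(sk^2)$), and Claim~\ref{claim:DRC} (dependent random choice) then finds $A\subseteq W$ in which every $2^{k-1}$ vertices have a large common neighbourhood in $U$; the clique number is then halved in a single case analysis. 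In other words, the paper's decisive point is that the relevant density can be guaranteed to be at least $n^{-\delta^2}$ with $\delta$ tunable via $t$, rather than locked at the fixed $n^{-1/(sk)}$ that your greedy imposes.
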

\begin{theorem}\label{thm:Ktt vs Kr upper bound}
      There is an absolute constant $C > 0$ such that for every $r$ and every bipartite graph~$F$, we have
    $$f_{F,K_r}(n)= O(n^{\frac{C}{\log r}}).$$
\end{theorem}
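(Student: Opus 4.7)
Since $F$ is bipartite with $t = |V(F)|$, we have $F \subseteq K_{t,t}$, so it suffices to bound $f_{K_{t,t}, K_r}(n)$. I will construct a $K_r$-free graph on $n$ vertices whose largest $K_{t,t}$-free induced subgraph has size $O(n^{C/\log r})$.

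The construction is a binomial random graph. I take $G \sim G(n, p)$ with $p = c n^{-2/(r-1)}$ for a small constant $c > 0$. The first-moment estimate $\mathbb{E}[\#K_r(G)] = \binom{n}{r} p^{\binom{r}{2}} = O(c^{r(r-1)/2})$ shows that for small enough $c$, $G$ is $K_r$-free \whp; alternatively, $G$ has only $O(1)$ copies of $K_r$ and deleting one edge from each produces a $K_r$-free graph $G'$ essentially identical to $G$.

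For the key step, I fix an $m$-subset $S \subseteq V(G)$ and consider the number of $K_{t,t}$-copies in $G[S] \sim G(m,p)$, whose first moment is $\Theta(m^{2t} p^{t^2})$. Janson's inequality gives $\Pr[G[S] \text{ is } K_{t,t}\text{-free}] \le \exp(-\Omega(m^{2t} p^{t^2}))$ once $m$ is above the $K_{t,t}$-threshold. A union bound over the $\binom{n}{m} \le n^m$ subsets $S$ then forces every induced $m$-set to contain $K_{t,t}$ as soon as $m \gtrsim n^{t/(r-1)} \cdot \text{polylog}(n)$, yielding $f_{K_{t,t}, K_r}(n) = O(n^{t/(r-1)})$. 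To conclude, $n^{t/(r-1)} \le n^{C/\log r}$ whenever $r \ge 1 + t \log r / C$, which holds for all sufficiently large $r$ with $C$ absolute. For the remaining bounded range of $r$, Theorem~\ref{thm:cliquefree vs clique} gives $f_{F, K_r}(n) \le n^{1/2 - \eps_F}$, and choosing $C$ large enough that $C/\log r \ge 1/2$ over that range completes the proof.

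\textbf{Main obstacle.} The delicate point is securing a \emph{single} absolute constant $C$ uniformly over all bipartite $F$ and all $r$: the random-graph estimate only kicks in for $r \gtrsim t \log t / C$, while Theorem~\ref{thm:cliquefree vs clique} covers $r$ bounded in terms of $\eps_F$, and for $F$ with $|V(F)|$ large there may be an intermediate regime of $r$ covered by neither estimate a priori. Bridging this gap --- perhaps through sharper Janson/Kim--Vu tail estimates, or via an iterated lexicographic product of smaller random building blocks (where each level is a random $K_{r_k}$-free graph with $r_k$ growing geometrically) --- is likely the main technical step needed to make the constant truly absolute.
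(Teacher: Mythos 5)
Your direct random-graph construction gives $f_{K_{t,t},K_r}(n)\le n^{t/(r-1)+o(1)}$, an exponent that scales like $t/r$, whereas the theorem asserts an exponent $\le C/\log r$ with $C$ \emph{absolute} --- independent of $F$ and hence of~$t$. As you yourself note in your ``main obstacle,'' this creates a genuine gap: the random-graph bound is only at most $C/\log r$ once $r\gtrsim t\log t$, Theorem~\ref{thm:cliquefree vs clique} (in its precise form, Theorem~\ref{thm:cliquefree vs clique precise}) only helps when $C/\log r\ge 1/2$, i.e.\ $r\le 2^{2C}$, and for $t$ large the window $2^{2C}<r\lesssim t\log t$ is covered by neither. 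So the proposal as written does not prove the statement; the obstacle you flag is real and fatal to the direct approach.

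That said, the fix you sketch --- ``an iterated lexicographic product of smaller random building blocks'' --- is essentially what the paper does. The crucial ingredient that makes a $t$-independent exponent possible is Lemma~\ref{lem:substitution 2 graphs}: $\alpha_{K_{t,t}}(G\cdot H)\le \alpha(G)\,\alpha_{K_{t,t}}(H)+(t-1)\alpha_{K_{t,t}}(G)$, so in particular $\alpha_{K_{t,t}}(G\cdot G)\le t\,\alpha(G)\,\alpha_{K_{t,t}}(G)$. Here $t$ enters only as a multiplicative constant, never in an exponent; this is exactly what decouples the exponent from~$t$. The paper then controls the clique number of the building block not directly but via the property ``every $s$-vertex subgraph is $(r-1)$-colourable'' (Lemmas~\ref{lem:product chromatic} and~\ref{lem:random graph properties}), because chromatic number multiplies under unions and lexicographic products while clique number does not. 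Unwinding the recursion (Lemma~\ref{lem:rho recursion}) then gives $\rho_{2^k}\le C/k$ by induction on $k$. Your proposal needs this entire second half fleshed out; without it there is no proof.
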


From the above two theorems, we see that there are absolute constants $c,C > 0$ such that for every $r \geq 4$ and every bipartite graph $F$ with large enough minimum degree (compared to $r$), we \nolinebreak have
$$\Omega(n^{\frac{c}{\log r}})\leq f_{F,K_r}(n)\leq O(n^{\frac{C}{\log r}}).$$
Note that this is in striking contrast with Ramsey numbers, for which we have $\Omega(n^{\frac{c}{r}})\leq f_{K_2,K_r}(n)\leq O(n^{\frac{C}{r}})$. We also point out that both Theorem \ref{thm:Ktt vs Kr lower bound} and Theorem \ref{thm:Ktt vs Kr upper bound} use methods that are rather novel in the study of Erd\H os--Rogers functions. 

As mentioned above, Mubayi and Verstra\"ete \cite{MV_general_graphs} proved that for all (non-empty) triangle-free graphs $F$, we have $f_{F,K_3}(n)=n^{1/2+o(1)}$. This shows that $f_{F,K_3}(n)$ is quite close to $f_{K_2,K_3}(n)$ for every triangle-free graph $F$. They asked to find an example where the two functions have different orders of magnitudes.

\begin{problem}[Mubayi--Verstra\"ete \cite{MV_general_graphs}] \label{problem:F-free triangle-free}
    Find a triangle-free $F$ for which $f_{F,K_3}(n)/f_{K_2,K_3}(n)\rightarrow \infty$.
\end{problem}

\noindent Note that by the celebrated result of Kim \cite{Kim95} on the Ramsey number $R(3,k)$, we have $f_{K_2,K_3}(n)=\Theta(\sqrt{n \log n})$. Problem \ref{problem:F-free triangle-free} remains open, but in Subsection \ref{sec:F vs triangle} we present a connection to the famous Zarankiewicz problem for $6$-cycles, similar to the connection between Ramsey numbers and the Zarankiewicz problem discussed in \cite{CMMV24}.

\paragraph{Organization of the paper.} In Section \ref{sec:Ktt-free}, we prove Theorems \ref{thm:Ktt vs K4}, \ref{thm:F vs Kr} and \ref{thm:Ktt vs Kr lower bound} and Corollary \ref{cor:large Turan}. In Section \ref{sec:construction}, we prove Theorems \ref{thm:cliquefree vs clique} and \ref{thm:Ktt vs Kr upper bound}. In this section we also discuss the problem of estimating $f_{F,K_3}(n)$ for an arbitrary triangle-free graph $F$, and reveal a connection to the Zarankiewicz problem for $C_6$. In Section \ref{sec:concluding}, we give some concluding remarks.

In Section \ref{sec:Ktt-free}, logarithms are in base $e$, while in Section \ref{sec:construction}, logarithms are in base $2$.

\section{Lower bounds}\label{sec:Ktt-free}
In this section we prove Theorems \ref{thm:Ktt vs K4}, \ref{thm:F vs Kr} and \ref{thm:Ktt vs Kr lower bound} and Corollary \ref{cor:large Turan}.
We denote by $\alpha_F(G)$ the largest order of an $F$-free induced subgraph of $G$.
The {\em $s$-domination number} $\gamma_s(F)$ of a graph $F$ is the minimum $k$ for which there is a set $A \subseteq V(F)$ with $|A| = k$ such that every $v \in V(F) \setminus A$ has at least $s$ neighbours in $A$. We will need the following lemma, showing that graphs of large minimum degree have small $s$-domination number.
\begin{lemma}\label{lem:domination}
Let $t\geq 2$ and let $F$ be a graph with minimum degree $t$. Let $\frac{6\log t}{t} \leq \delta \leq 1$ and set $s = \lfloor \frac{\delta t}{3} \rfloor$. Then $\gamma_s(F) \leq \delta \cdot v(F)$. 
\end{lemma}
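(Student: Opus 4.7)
The plan is a standard probabilistic argument: form a random subset $A_0 \subseteq V(F)$ by including each vertex independently with probability $p$, then augment $A_0$ to an $s$-dominating set by adding the vertices which are ``under-dominated''. Precisely, I set $p := (1-\beta)\delta$ for some small $\beta > 0$ to be fixed below, let $B := \{v \in V(F) \setminus A_0 : |N(v) \cap A_0| < s\}$, and define $A := A_0 \cup B$. By construction $A$ is $s$-dominating, so $\gamma_s(F) \leq \mathbb{E}[|A|]$.

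For the expected size, linearity gives $\mathbb{E}[|A_0|] = p \cdot v(F)$, while $\mathbb{E}[|B|] \leq v(F) \cdot \max_v \Pr[|N(v) \cap A_0| < s]$. Every vertex $v$ has at least $t$ neighbours, so by stochastic domination $\Pr[|N(v) \cap A_0| < s] \leq \Pr[\Bin(t,p) < s]$. Writing $\mu := tp = (1-\beta)\delta t$, the inequality $s \leq \delta t/3$ yields $s/\mu \leq 1/(3(1-\beta))$, so the standard Chernoff bound $\Pr[\Bin(t,p) \leq (1-\eta)\mu] \leq \exp(-\eta^2 \mu/2)$, applied with $1 - \eta = 1/(3(1-\beta))$, gives
$$
\Pr[\Bin(t,p) < s] \;\leq\; \exp\!\left(-\frac{(2-3\beta)^2}{18(1-\beta)} \cdot \delta t\right).
$$
For $\beta$ small the exponent approaches $-2\delta t/9$, which by the hypothesis $\delta t \geq 6 \log t$ is at most $-(4/3 - o(1)) \log t$, so $\Pr[\Bin(t,p) < s] \leq t^{-4/3 + o(1)}$.

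It then remains to pick $\beta$ so that $p + \Pr[\Bin(t,p) < s] \leq \delta$, i.e., $\Pr[\Bin(t,p) < s] \leq \beta \delta$. Since $\beta \delta \geq 6\beta \log t/t$, the choice $\beta := t^{-1/4}$ works comfortably for all sufficiently large $t$. The remaining cases in which $t$ is below an absolute constant are vacuous: the hypothesis $\delta \geq 6 \log t/t$ combined with $\delta \leq 1$ forces $t$ to exceed a constant threshold.

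The main technical obstacle is the tightness of the parameter choice. A careless choice such as $p = \delta/2$, separated from $\delta$ by a constant factor, would only yield $\Pr[\Bin(t,p) < s] = t^{-c}$ with a small constant $c$, vastly exceeding the available ``budget'' $\beta \delta$, which is of order at most $\log t / t$. Hence $p$ must be taken essentially equal to $\delta$, and the role of the lower-order correction $\beta$ is precisely to leave room for the augmentation $B$; the constants $6$ and $3$ in the statement are calibrated so that the Chernoff exponent beats this budget by a polynomial factor.
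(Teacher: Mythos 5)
Your overall strategy matches the paper's exactly: sample each vertex into $A_0$ independently with probability a bit below $\delta$, augment by the under-dominated vertices $B$, and bound $\mathbb{E}[|A_0|]+\mathbb{E}[|B|]$ via a Chernoff estimate on $\Bin(t,p)$. The algebra up to the point where you reduce the claim to $\Pr[\Bin(t,p)<s]\le\beta\delta$ is correct.

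The gap is the final two sentences, i.e., the claim that the choice $\beta=t^{-1/4}$ covers all $t$ permitted by the hypotheses. The constraint $\frac{6\log t}{t}\le\delta\le 1$ forces $t\gtrsim 17$ (that is the ``constant threshold'' you invoke), but that is not the threshold your argument needs. With $\beta=t^{-1/4}$, the Chernoff estimate gives $\Pr[\Bin(t,p)<s]\le\exp\bigl(-\tfrac{(2-3\beta)^2}{18(1-\beta)}\,\delta t\bigr)$, and comparing this to $\beta\delta\ge 6t^{-5/4}\log t$ at the worst case $\delta=6\log t/t$, the inequality $\Pr\le\beta\delta$ first holds around $t\approx 90$. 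For instance at $t=50$, $\beta\approx 0.376$, $\delta=6\log 50/50\approx 0.47$, the exponent coefficient is $\approx 0.068$, and one gets $\Pr\lesssim e^{-1.59}\approx 0.20$ while $\beta\delta\approx 0.18$, so the required bound fails. Thus the range $17\lesssim t\lesssim 90$ is not vacuous and is left unhandled; the assertion that ``the cases where $t$ is below an absolute constant are vacuous'' conflates two different constants. The fix is what the paper does: take a fixed $\beta$ (e.g.\ $\beta=0.1$, $p=0.9\delta$), for which $\Pr[\Bin(t,0.9\delta)<\delta t/3]\le e^{-\delta t/6}\le e^{-\log t}=1/t\le\delta/10=\beta\delta$ already once $t\ge 6$, comfortably inside the range the hypotheses force; there is then no residual small-$t$ case to discharge.
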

\begin{proof}
The assumptions on $\delta$ imply that $t \geq 10$, say. 
Sample a subset $A_0 \subseteq V(F)$ by including each vertex with probability $0.9\delta$ independently. 
For a given $v \in V(F)$, the number of neighbours of $v$ in $A_0$ is distributed $\Bin(d(v),0.9\delta)$. Also, $d(v) \geq t$. 
Thus, by the Chernoff bound, the probability that $|N(v) \cap A_0| < s = \lfloor \frac{\delta t}{3} \rfloor$ is at most 
$\mathbb{P}\left[ \Bin(t,0.9\delta) < \frac{\delta t}{3} \right] \leq e^{-(17/27)^2 \cdot \frac{0.9\delta t}{2}} < e^{-\delta t/6}
\leq e^{-\log t} = \frac{1}{t} \leq \frac{\delta}{10},$ where the last two inequalities use that $\delta \geq \frac{6\log t}{t}$ and the last inequality also uses that $t\geq 10$.

Finally, let $A$ contain all vertices in $A_0$ as well as each $v \in V(F) \setminus A_0$ with less than $s$ neighbours in $A_0$. By linearity of expectation, the expected size of $A$ is at most $0.9\delta \cdot v(F) + 0.1 \delta \cdot v(F) = \delta \cdot v(F)$, as required.     
\end{proof}

The following lemma, which we think is of independent interest, is a key for the proof of Theorems \ref{thm:Ktt vs K4}, \ref{thm:F vs Kr} and \ref{thm:Ktt vs Kr lower bound}. Here and below, for $X \subseteq V(G)$, we let $N(X)$ denote the common neighbourhood of $X$. 
\begin{lemma}\label{claim:sampling}
		Let $0<\delta<\beta<1$, let $F$ be a graph, let $s \geq 1$, and
        suppose that $\gamma_s(F) \leq \delta \cdot v(F)$.
		Let $n$ be sufficiently large and let $G$ be an $n$-vertex graph with $\alpha_F(G) < 0.5n^{\beta-2\delta}$. Then there are at least $0.5 n^{(1-\beta+\delta)s}$ sets $X\subseteq V(G)$ of size $s$ with $|N(X)|\geq n^{1-\beta}$. 
\end{lemma}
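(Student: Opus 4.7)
The plan is to argue by contradiction: suppose that $G_0$, the number of good $s$-subsets of $V(G)$ (i.e.\ $s$-subsets $X$ with $|N(X)|\geq n^{1-\beta}$), is strictly less than $0.5 n^{(1-\beta+\delta)s}$. From this I will construct an $F$-free induced subgraph of $G$ of size at least $0.5 n^{\beta-2\delta}$, contradicting the hypothesis $\alpha_F(G) < 0.5 n^{\beta-2\delta}$.

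The probabilistic step is to set $p := n^{-(1-\beta+\delta)}$ and sample $U \subseteq V(G)$ by including each vertex independently with probability $p$. Then $\mathbb{E}|U|=n^{\beta-\delta}$, while the expected number of good $s$-subsets contained in $U$ is at most $G_0\,p^s < 1/2$. Combining Chernoff concentration for $|U|$ with Markov's inequality for the number of good $s$-subsets in $U$, one obtains with positive probability an instance $U$ of size $\Omega(n^{\beta-\delta})$ such that every $s$-subset $Y \subseteq U$ is ``bad'', i.e.\ $|N_G(Y)| < n^{1-\beta}$.

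Next, I use the $s$-domination structure of $F$ to identify a small hitting set for $F$-copies in $G[U]$. Fix an $s$-dominating set $A\subseteq V(F)$ with $|A|=a\leq\delta v(F)$. Any copy of $F$ in $G[U]$ corresponds to an embedding $\phi$ with $A' := \phi(A)\subseteq U$, and for each periphery vertex $v\in V(F)\setminus A$, the image $\phi(v) \in U$ lies in $N_G(Y_v)\cap U$, where $Y_v\subseteq A'$ is an $s$-subset determined by $v$'s neighborhood in $A$. Let $B\subseteq U$ be the collection of all such periphery images $\phi(v)$ arising from $F$-copies in $G[U]$; then $G[U\setminus B]$ is $F$-free by construction. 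Since every $u\in B$ witnesses a pair $(Y_v,u)$ with $Y_v\in\binom{U}{s}$ and $u\in N_G(Y_v)\cap U$, we have $|B|\leq \sum_{Y\in\binom{U}{s}}|N_G(Y)\cap U|$. A direct expectation computation gives
$$
\mathbb{E}\Bigl[\sum_{Y\in\binom{U}{s}}|N_G(Y)\cap U|\Bigr]= p^{s+1}\sum_{v\in V(G)}\binom{d(v)}{s}\leq p^{s+1}\Bigl(G_0\, n + \binom{n}{s}\,n^{1-\beta}\Bigr),
$$
which, using the assumed bound on $G_0$, works out to $\tfrac12 n^{\beta-\delta}+O\!\left(n^{s\beta-(s+1)\delta}\right)$. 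If this is $\leq |U|/4$, then with positive probability $|B|\leq |U|/2$ by Markov, and so $|U\setminus B|\geq |U|/2 \gg n^{\beta-2\delta}$, yielding the desired contradiction.

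The main obstacle is the ``bad-subsets'' error term $n^{s\beta-(s+1)\delta}$ in the expectation above. It is well-controlled when $\beta$ is small relative to $\delta$ (roughly $\beta\leq s\delta/(s-1)$), but not in general. To overcome this, I expect one needs to either (i) shrink the sampling probability to $p=n^{-(1-\beta+\delta+\eta)}$ for a suitable small $\eta>0$, trading off some of $|U|$ in return for a sparser overlap structure, or (ii) use the hypothesis $\alpha(G)\leq\alpha_F(G)<0.5 n^{\beta-2\delta}$ together with the Caro--Wei inequality to obtain a lower bound on $\sum_v \binom{d(v)}{s}$ of the form $\Omega(n^{1+s(1-\beta+2\delta)})$, which via the identity $\sum_X|N(X)|=\sum_v\binom{d(v)}{s}$ directly forces $G_0\geq 0.5 n^{(1-\beta+\delta)s}$.
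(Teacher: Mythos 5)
Your setup through the first sampling step matches the paper: argue by contradiction, sample $U$ with $p=n^{-1+\beta-\delta}$, and obtain $|U|=\Theta(n^{\beta-\delta})$ with no good $s$-set inside $U$. But from that point on you diverge, and the divergence is where the gap lies.

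Your hitting-set idea is to collect the set $B$ of all periphery images of all $F$-copies in $G[U]$ and delete it; the bound you write, $|B|\le\sum_{Y\in\binom{U}{s}}|N_G(Y)\cap U|$, is a blunt union bound over \emph{all} $s$-subsets of $U$, and its expectation has the extra term $n^{s\beta-(s+1)\delta}$ which, as you yourself observe, is only subdominant when $\beta\lesssim \frac{s}{s-1}\delta$. Your proposed fix (i), shrinking $p$ by an extra factor $n^{-\eta}$, does not escape this: to suppress that term you need $\eta>\frac{(s-1)\beta}{s}-\delta$, but to still output a set of size $\ge 0.5n^{\beta-2\delta}$ you need $\eta\le\delta$, and these are simultaneously satisfiable only when $\beta<\frac{2s}{s-1}\delta$. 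Your fix (ii) (Caro--Wei on $\sum_v\binom{d(v)}{s}$ against the good/bad split of $\sum_X|N(X)|$) runs into essentially the same restriction, $(s-1)\beta< 2s\delta$. Since the lemma is applied in the paper with $\beta=1/2$ (or $\beta=1/k$) and $\delta$ tiny, the range $\delta<\beta\lesssim 2\delta$ is far too narrow, so neither variant actually proves the statement.

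The missing ingredient is a \emph{codegree} property of $U$ with respect to bad $s$-sets, proved by a union bound over the whole graph before $U$ is fixed: with high probability, \emph{every} bad $s$-set $X\subseteq V(G)$ satisfies $|N(X)\cap U|<q:=\lceil 2s/\delta\rceil$, because $\Pr[|N(X)\cap U|\ge q]\le |N(X)|^q p^q\le n^{-\delta q}\le n^{-2s}$, which beats the $\binom{n}{s}$ choices of $X$. Combined with "$U$ has no good $s$-set," this gives: every $s$-subset of $U$ has at most $q=O(1)$ common neighbours in $U$. That converts your qualitative observation ("each periphery image lies in $N_G(Y_v)\cap U$") into a quantitative counting bound: once the $\le\delta f$ vertices of the dominating set $A$ are embedded (at most $|U|^{\delta f}$ ways), each of the remaining $f-|A|$ vertices has fewer than $q$ choices, so the number of $F$-copies in $G[U]$ is $<|U|^{\delta f}q^f<n^{\delta f}$. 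The proof then finishes not by deleting a hitting set from $U$, but by a second, independent sparsification: sample $W\subseteq U$ at rate $p'=n^{-\delta}$, making the expected number of $F$-copies $\le n^{\delta f}(p')^f=1$ while $\mathbb{E}|W|\ge 0.9n^{\beta-2\delta}$, and delete one vertex per surviving copy. So the needed idea is the per-bad-set codegree bound in $U$ (giving $O(1)$ extensions and hence a polynomial count of $F$-copies), followed by a second round of random sparsification rather than a union-bound deletion.
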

\begin{proof}
    We say that an $s$-set $X$ is {\em good} if $|N(X)| \geq n^{1-\beta}$, and {\em bad} otherwise. Suppose by contradiction that there are less than $0.5 n^{(1-\beta+\delta)s}$ good $s$-sets. Sample a set $U$ by including each vertex with probability 
	$p = n^{-1+\beta-\delta}$. Then with high probability, $0.9n^{\beta-\delta} \leq |U| \leq 1.1n^{\beta-\delta}$. Also, the expected number of good $s$-sets contained in $U$ is at most
		$
		0.5n^{(1-\beta+\delta)s} p^s = 0.5,
		$ 
		so with probability at least $0.5$, $U$ contains no good $s$-sets.
		Next, we claim that with high probability, for every bad $s$-set $X$ it holds that $|N(X) \cap U| < q := \lceil \frac{2s}{\delta} \rceil$. Indeed, for a given bad set $X$, the probability that $|N(X) \cap U| \geq q$ is at most 
        $|N(X)|^q \cdot p^q \leq n^{(1-\beta)q}p^q = n^{-\delta q} \leq n^{-2s}$. Since there are at most $\binom{n}{s} \leq n^s$ choices for $X$, our claim follows by the union bound. So we see that with positive probability, $U$ satisfies all of the following properties: $0.9n^{\beta-\delta} \leq |U| \leq 1.1n^{\beta-\delta}$; $U$ contains no good $s$-sets; for every bad $s$-set $X$ it holds that $|N(X) \cap U| < q$. The two latter properties imply that $|N(X) \cap U| < q$ for every $X \subseteq U$ of size $s$. From now on, fix a set $U$ with these properties. 

        Put $f := v(F)$. Let $A \subseteq V(F)$ of size $|A| = \gamma_s(F) \leq \delta f$ such that every $v \in V(F) \setminus A$ has at least $s$ neighbours in $A$. 
        We now bound the number of copies of $F$ in $G[U]$. 
        Clearly, the number of ways to embed $A$ is at most $|U|^{|A|} \leq |U|^{\delta f}$. Now consider a given embedding $\phi : A \rightarrow U$. For each $v \in V(F) \setminus A$, we have $|N_F(v) \cap A| \geq s$. By the properties of $U$, the set $\phi(N_F(v) \cap A)$ has less than $q = O(1)$ common neighbours in $U$. Hence, for each $v \in V(F) \setminus A$, there are $O(1)$ options to embed $v$. It follows that the number of copies of $F$ in $G[U]$ is at most $|U|^{\delta f} \cdot O(1)^f = O(|U|^{\delta f}) < n^{\delta f}$. Now sample a subset $W \subseteq U$ by including each vertex in $W$ with probability $p' = n^{-\delta}$. 
        The expected number of copies of $F$ in $W$ is at most $n^{\delta f} \cdot (p')^f = 1$. Also, the expected size of $W$ is at least $|U| \cdot p' \geq 0.9n^{\beta-\delta} \cdot p' = 0.9n^{\beta - 2\delta}$. Hence, there is an outcome of $W$ for which, by deleting at most one vertex, we obtain an $F$-free induced subgraph of $G$ with at least $0.5n^{\beta-2\delta}$ vertices, a contradiction. 
\end{proof}

\subsection{Proof of Theorems \ref{thm:Ktt vs K4} and \ref{thm:F vs Kr} and Corollary \ref{cor:large Turan}}

We will derive Theorems \ref{thm:Ktt vs K4} and \ref{thm:F vs Kr} from the following theorem.

\begin{theorem}\label{thm:F vs Kr domination}
    Let $r \geq 4$ and let $F$ be a graph which contains a copy of $K_{r-2}$. Let $\delta > 0$, and suppose that $\gamma_s(F) \leq \delta \cdot v(F)$ for $s = \lceil \frac{1}{\delta} \rceil$. Then $f_{F,K_r}(n) \geq 0.5n^{1/2 - 2\delta}$ holds for all sufficiently large~$n$.
\end{theorem}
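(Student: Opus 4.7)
I will assume for contradiction that $G$ is a $K_r$-free graph on $n$ vertices with $\alpha_F(G) < 0.5 n^{1/2-2\delta}$, and aim to derive a contradiction. The overall strategy is to locate, inside $G$, a copy of $K_{r-2}$ whose common neighborhood has size at least $n^{1/2}$. Once such a $K_{r-2}$ is found, the $K_r$-freeness of $G$ forces the common neighborhood to be an independent set, and since $F$ contains $K_{r-2}$ (which has at least one edge for $r\ge 4$), any independent set is $F$-free. An independent set of size $n^{1/2}$ is far larger than $0.5 n^{1/2-2\delta}$, giving the desired contradiction.

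To produce such a $K_{r-2}$, first apply Lemma~\ref{claim:sampling} to $G$ with $\beta = 1/2$. Under the contradiction hypothesis together with the given bound $\gamma_s(F) \le \delta v(F)$, this produces a collection of at least $0.5 n^{(1/2+\delta)s}$ ``good'' $s$-sets $X\subseteq V(G)$, each satisfying $|N(X)| \ge n^{1/2}$. If some good $s$-set $X$ already satisfies $G[X]\supseteq K_{r-2}$, we are done. The plan to ensure this is an iterative construction: starting with an empty clique $K^{(0)} = \emptyset$ and common neighborhood $V(G)$, at step $i$ I apply Lemma~\ref{claim:sampling} inside the current common neighborhood $N^{(i)}$ (of size $n^{\mu_i}$) with the parameter $\beta_i := (1/2 - 2\delta)/\mu_i + 2\delta$. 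This choice is precisely what is needed for the global bound $\alpha_F(G) < 0.5 n^{1/2-2\delta}$ to imply the local hypothesis $\alpha_F(G[N^{(i)}]) < 0.5 |N^{(i)}|^{\beta_i - 2\delta}$ required by the lemma. The lemma then yields a good $s$-set $X_{i+1}\subseteq N^{(i)}$; since every vertex of $X_{i+1}$ is adjacent in $G$ to every vertex of the previously built clique, extending that clique by a maximum clique of $G[X_{i+1}]$ produces a larger clique, and the common neighborhood shrinks according to $\mu_{i+1} \ge (1-\beta_i)\mu_i$.

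The main obstacle is the parameter analysis. The resulting recursion $\mu_{i+1} = (1-2\delta)\mu_i - (1/2-2\delta)$ decays quickly, so naively only a bounded number of iterations are possible before the common neighborhood falls below $n^{1/2-2\delta}$. If each iteration contributed just one vertex to the growing clique, the construction would fall short of $K_{r-2}$ for $r\ge 5$. The delicate part of the proof is therefore arguing that at some iteration one can find a good $s$-set which already contains a large clique (ideally of size close to $r-2$), or that the cliques extracted across iterations accumulate fast enough to reach $K_{r-2}$ while keeping the common neighborhood of size at least $0.5 n^{1/2-2\delta}$. I expect this to rely on an averaging argument over the many good $s$-sets produced by Lemma~\ref{claim:sampling}, exploiting the fact that $s = \lceil 1/\delta\rceil$ can be considerably larger than~$r$, so that a typical good $s$-set must carry substantial clique structure; otherwise $G$ itself would be sparse enough that a direct independent-set argument (via the deletion method applied in a moderately dense common neighborhood) already yields an $F$-free induced subgraph of the required size.
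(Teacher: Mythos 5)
Your proposal has a genuine gap, which you acknowledge yourself, and the mechanism you hint at to close it is not the one that works. The iterative plan collapses almost immediately: starting from $\mu_0=1$, your recursion $\mu_{i+1}=(1-2\delta)\mu_i-(1/2-2\delta)$ gives $\mu_1=1/2$ and then $\mu_2=\delta$, which is already far below $1/2-2\delta$. So you get at most one nontrivial round before the common neighbourhood is too small for Lemma~\ref{claim:sampling} to apply, and a single round only contributes $\omega(G[X])$ clique vertices for one good $s$-set $X$, with no reason for this to be as large as $r-2$. Your closing speculation that a typical good $s$-set should ``carry substantial clique structure'' is unjustified and is not what makes the theorem true.

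The idea you are missing is that one need not locate any $K_{r-2}$ with large common neighbourhood. The hypothesis that $F$ contains $K_{r-2}$ plays an entirely different role: it guarantees that for \emph{every} edge $yz$ of $G$, the common neighbourhood $N(\{y,z\})$ is $K_{r-2}$-free (by $K_r$-freeness of $G$), hence $F$-free, hence of size less than $\alpha_F(G)<n^{1/2-2\delta}$. With that pointwise bound, one applies Lemma~\ref{claim:sampling} once with $\beta=1/2$ to get at least $n^{s/2}$ good $s$-sets, and then double counts tuples $(y,z,x_1,\dots,x_s)$ with $yz$ an edge, each $x_i\in N(\{y,z\})$, and $\{x_1,\dots,x_s\}$ good. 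The count is at most $2e(G)\cdot n^{s(1/2-2\delta)}$, so averaging over the $\ge n^{s/2}$ good $s$-sets yields some good $X$ with $e(N(X))\le 2e(G)\cdot n^{-2s\delta}\le n^{2-2s\delta}\le 1$ (using $s\ge 1/\delta$). Since $|N(X)|\ge n^{1/2}$, deleting one vertex gives an independent --- hence $F$-free --- induced subgraph of order $n^{1/2}-1$, contradicting $\alpha_F(G)<0.5n^{1/2-2\delta}$. Your instinct that an averaging argument over good $s$-sets is involved was correct, but the quantity to average is the number of edges inside $N(X)$, not the clique number of $G[X]$.
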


\begin{proof}
Let $G$ be an $n$-vertex $K_r$-free graph, and suppose by contradiction that $\alpha_F(G) < 0.5n^{1/2 - 2\delta}$. Clearly, we have $\delta<1/4$ by the bound on $\alpha_F(G)$. By Lemma \ref{claim:sampling} with $\beta = 1/2$, there are at least $n^{s/2}$ sets $X$ with $|N(X)| \geq n^{1/2}$. Let us count $(s+2)$-tuples $y,z,x_1,\dots,x_s$ such that $X = \{x_1,\dots,x_s\}$ satisfies $|N(X)| \geq n^{1/2}$, $x_1,\dots,x_s$ are common neighbours of $y,z$, and $yz$ is an edge. 
As $G$ is $K_r$-free, the common neighbourhood of any edge $yz$ is $K_{r-2}$-free and hence $F$-free (as $F$ contains $K_{r-2}$).
Since $\alpha_F(G) < n^{1/2-2\delta}$, this common neighbourhood has size less than $n^{1/2-2\delta}$. Hence, the number of $(s+2)$-tuples as above is less than $2e(G) \cdot n^{s/2-2s\delta}$. 
So by averaging, there exist vertices $x_1,\dots,x_s$ such that for $X = \{x_1,\dots,x_s\}$ we have $|N(X)|\geq n^{1/2}$ and
$e(N(X)) \leq \frac{2e(G) \cdot n^{s/2 - 2s\delta}}{n^{s/2}} \leq n^{2-2s\delta} \leq n^{2 - 2} = 1$. Therefore, $N(X)$ contains an independent set of size $|N(X)| - 1 \geq n^{1/2}-1 \geq n^{1/2-2\delta}$, a contradiction.  
\end{proof}
By combining Theorem \ref{thm:F vs Kr domination} with Lemma \ref{lem:domination}, we get the following. 
\begin{theorem}\label{thm:F vs Kr main}
    Let $r \geq 4$ and let $F$ be a graph with minimum degree $t$ which contains a copy of $K_{r-2}$. Then $f_{F,K_r}(n) \geq 0.5n^{1/2 - 6/\sqrt{t}}$ holds for all sufficiently large $n$.
\end{theorem}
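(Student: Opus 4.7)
The plan is simply to combine the two preceding results: invoke Theorem \ref{thm:F vs Kr domination} with a parameter $\delta$ tuned so that the conclusion $0.5 n^{1/2 - 2\delta}$ matches the target exponent $1/2 - 6/\sqrt{t}$, and use Lemma \ref{lem:domination} to verify the $s$-domination hypothesis. The natural choice is $\delta = 3/\sqrt{t}$, so that $2\delta = 6/\sqrt{t}$.

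With this choice in mind, I would first check that Lemma \ref{lem:domination} applies: the condition $6 \log t / t \leq \delta \leq 1$ reduces to $\sqrt{t} \geq 2 \log t$, which holds for all sufficiently large $t$. The lemma then supplies $\gamma_{s_0}(F) \leq \delta \cdot v(F)$ for $s_0 = \lfloor \delta t / 3 \rfloor = \lfloor \sqrt{t} \rfloor$. However, Theorem \ref{thm:F vs Kr domination} requires the domination bound for the specific value $s = \lceil 1/\delta \rceil = \lceil \sqrt{t}/3 \rceil$, which is smaller than $s_0$. To bridge these, I would invoke the elementary monotonicity $\gamma_{s'}(F) \leq \gamma_{s_0}(F)$ for $s' \leq s_0$: any set $A \subseteq V(F)$ such that every $v \notin A$ has at least $s_0$ neighbours in $A$ automatically has the weaker property with any $s' \leq s_0$. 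Since $\lceil \sqrt{t}/3 \rceil \leq \lfloor \sqrt{t} \rfloor$ for $t$ sufficiently large, monotonicity yields $\gamma_{\lceil 1/\delta \rceil}(F) \leq \delta \cdot v(F)$, as required.

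Feeding this into Theorem \ref{thm:F vs Kr domination} immediately produces $f_{F,K_r}(n) \geq 0.5\, n^{1/2 - 2\delta} = 0.5\, n^{1/2 - 6/\sqrt{t}}$ for all $n$ large enough in terms of $t$. For the bounded regime where $t$ lies below the absolute threshold needed above, the exponent $1/2 - 6/\sqrt{t}$ is non-positive and hence $0.5 \, n^{1/2 - 6/\sqrt{t}} < 1$ for sufficiently large $n$; since $F$ contains $K_{r-2}$ with $r \geq 4$ and is thus non-empty, any single vertex gives an $F$-free induced subgraph and the bound holds trivially.

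There is no genuine difficulty here; the argument is essentially parameter matching. The only points requiring verification are the inequalities $\lceil 1/\delta \rceil \leq \lfloor \delta t / 3 \rfloor$ and $6 \log t / t \leq \delta$, which together determine how large $t$ must be for the two results to compose. This elementary threshold on $t$ can be absorbed into the ``sufficiently large $n$'' quantifier by treating the bounded $t$ regime separately as above.
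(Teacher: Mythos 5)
Your proposal is correct and mirrors the paper's own proof: set $\delta = 3/\sqrt{t}$, check the hypotheses of Lemma~\ref{lem:domination}, note the monotonicity of $\gamma_s$ in $s$ (which the paper uses implicitly when it writes $s \leq \delta t/3$), and plug into Theorem~\ref{thm:F vs Kr domination}, treating small $t$ as the trivial regime. The only thing left slightly to the reader in your write-up is the observation that the threshold on $t$ making the lemma applicable (roughly $t \geq 75$) falls below $144$, where the exponent turns positive, so the trivial regime and the lemma regime genuinely overlap — the paper makes this explicit by declaring ``we may assume $t \geq 144$.''
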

\begin{proof}
    Set $\delta := 3/\sqrt{t}$ and $s = \lceil 1/\delta \rceil$. We may assume that $t \geq 144$, else the conclusion is trivial. In this range we have $\frac{6\log t}{t} \leq \delta \leq 1$. Also, $s \leq \frac{\delta t}{3}$ (by the choice of $\delta$). Hence, by Lemma \ref{lem:domination}, we have $\gamma_s(F) \leq \delta \cdot v(F)$. Now the conclusion follows from Theorem \ref{thm:F vs Kr domination}. 
\end{proof}
Taking $r=4$ in Theorem \ref{thm:F vs Kr main} immediately gives Theorem \ref{thm:Ktt vs K4}. Also, it is easy to see that there exists a graph $F$ which has arbitrarily large minimum degree and contains $K_{r-2}$ but not $K_{r-1}$. For example, we can take the complete $(r-2)$-partite graph with parts of size $t$, where $t$ is sufficiently large. Hence, Theorem~\ref{thm:F vs Kr main} implies Theorem \ref{thm:F vs Kr}.

To deduce Corollary \ref{cor:large Turan}, we will use the following result of Alon, Krivelevich and Sudakov.

\begin{theorem}[Alon--Krivelevich--Sudakov \cite{AKS03}] \label{thm:degeneracy Turan}
    If $F$ is a bipartite graph which does not contain a subgraph of minimum degree at least $t+1$, then $\ex(n,F)=O(n^{2-\frac{1}{4t}})$.
\end{theorem}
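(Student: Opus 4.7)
The plan is to prove this via \emph{dependent random choice} (DRC) combined with a greedy embedding that exploits the $t$-degeneracy of $F$. Since $F$ contains no subgraph of minimum degree exceeding $t$, I can fix an ordering $v_1,\dots,v_f$ of $V(F)$ in which every $v_i$ has at most $t$ earlier neighbours. The aim is to show that if $G$ is an $n$-vertex graph with at least $Cn^{2-1/(4t)}$ edges, where $C=C(v(F))$ is a large enough constant, then $F\subseteq G$.

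The first step is to run DRC on $G$: sample $k=2t$ vertices $w_1,\dots,w_k$ uniformly at random with replacement from $V(G)$, and set $A=N(w_1)\cap\cdots\cap N(w_k)$. Jensen's inequality gives $\mathbb{E}[|A|]\geq d^{k}/n^{k-1}\gtrsim n^{1/2}$, where $d$ is the average degree of $G$. For a $t$-subset $S\subseteq V(G)$ with $|N(S)|<n^{1/2}$ (call it ``bad''), we have $\Pr[S\subseteq A]=(|N(S)|/n)^{k}<n^{-t}$, so the expected number of bad $t$-subsets of $V(G)$ contained in $A$ is at most $\binom{n}{t}n^{-t}=O(1)$. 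Deleting one vertex from each bad subset leaves a set $U\subseteq V(G)$ of size $\gg v(F)$ in which every $t$-subset has at least $n^{1/2}$ common neighbours in $G$.

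Given such a $U$, the embedding is conceptually straightforward: process $v_1,\dots,v_f$ in the degenerate order and assign $\phi(v_i)$ to be any previously unused common neighbour of the images of the $\leq t$ earlier neighbours of $v_i$. The relevant common neighbourhoods have size $\geq n^{1/2}\gg v(F)$, so such a choice always exists.

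The main obstacle is that the DRC only controls common neighbourhoods inside $V(G)$, whereas for the greedy embedding to iterate, each $\phi(v_i)$ must lie in $U$ so that the DRC property still applies at the next step. For bipartite $F$ this can be addressed by a two-stage DRC: after producing $U$ as above, one randomly subsamples it further and uses a Chernoff estimate together with a union bound over the $\binom{|U|}{t}$ possible $t$-subsets to show that every $t$-subset of the subsampled set has many common neighbours \emph{within} that set. Executing this union bound cleanly is the delicate technical point, and running DRC in two stages rather than one is precisely what forces the exponent $\frac{1}{4t}$ instead of the naive $\frac{1}{2t}$ coming from a single application of DRC.
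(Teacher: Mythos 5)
This statement is a theorem of Alon, Krivelevich and Sudakov cited as \cite{AKS03}; the paper invokes it as a black box with no accompanying proof, so there is no in-paper argument to compare against. Your first stage is a correct, standard application of dependent random choice: with average degree $d \approx n^{1-1/(4t)}$ and $k=2t$ samples, the calculations $\mathbb{E}|A| \geq d^{2t}/n^{2t-1} = \Omega(n^{1/2})$ and $\binom{n}{t}\bigl(n^{-1/2}\bigr)^{2t} = O(1)$ are right, and you correctly land on a set $U$ of size $\Omega(n^{1/2})$ in which every $t$-subset has at least $n^{1/2}$ common neighbours in $G$. You also correctly identify the real obstacle: to iterate the greedy embedding along the degenerate order, the vertex chosen at each step must itself lie in a set where the common-neighbour guarantee continues to hold.

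The second stage is where the argument breaks. After the first DRC you control $|N_G(S)|$, the number of common neighbours of a $t$-set $S\subseteq U$ in all of $V(G)$, but you have no information whatsoever about $|N_G(S)\cap U|$. Since $|U|$ and $|N_G(S)|$ are both only of order $n^{1/2}$ inside a universe of $n$ vertices, it is entirely possible that $N_G(S)\cap U=\emptyset$ for every $t$-set $S\subseteq U$. Subsampling $U$ cannot repair this: a Chernoff bound would concentrate $|N_G(S)\cap U'|$ around $p\cdot|N_G(S)\cap U|$, and if the latter is zero the concentration statement is vacuous, so the union bound over the $\binom{|U|}{t}$ sets has nothing to bite on. In short, the ``two-stage DRC'' you gesture at needs as its input exactly the property (many common neighbours \emph{inside} the ambient random set) that the one-shot DRC does not provide; this is a genuine missing idea rather than a deferred technicality. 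Closing it requires a structurally different argument---e.g.\ the one in \cite{AKS03}, which exploits the bipartition of $F$ together with the orientation induced by the degenerate ordering and runs a more delicate nested DRC in which the relevant common neighbourhoods are forced to live inside the correct host set.
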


\begin{proof}[Proof of Corollary \ref{cor:large Turan}]
    We will prove that $\delta=\eps^2/180$ is a suitable choice. Let $F$ be a bipartite graph with $\ex(m,F)=\Omega(m^{2-\delta})$. Let $t=\lfloor \frac{1}{5\delta}\rfloor$. By Theorem \ref{thm:degeneracy Turan}, $F$ must contain a subgraph of minimum degree at least $t+1$. But then by Theorem \ref{thm:Ktt vs K4}, we have $f_{F,K_4}(n)=\Omega(n^{1/2-6/\sqrt{t+1}})\geq \Omega(n^{1/2-6\sqrt{5\delta}})=\Omega(n^{1/2-\eps})$, as desired.
\end{proof}

\subsection{Proof of Theorem \ref{thm:Ktt vs Kr lower bound}}
    We will derive Theorem \ref{thm:Ktt vs Kr lower bound} from the following theorem.
	
    \begin{theorem}\label{thm:clique vs Ktt} 
       Let $F$ be a graph, let $\delta > 0$ and suppose that $\gamma_s(F) \leq \delta \cdot v(F)$ for $s = \lceil \frac{1}{\delta^3}\rceil$.
       Then for every $k \geq 1$, 
       $f_{F,K_{2^k}}(n) \geq n^{\frac{1}{k} - 2^k\delta}$ for all sufficiently large~$n$.
	\end{theorem}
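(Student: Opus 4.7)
The plan is induction on $k$. The base case $k=1$ is immediate: a $K_2$-free graph is edgeless, and the condition $\gamma_s(F)\leq \delta v(F)<v(F)$ forces $F$ to contain an edge, so the whole vertex set is $F$-free and $\alpha_F(G)=n\geq n^{1-2\delta}$. For the inductive step I fix $k\geq 2$, assume the theorem for $k-1$, and suppose for contradiction that some $n$-vertex $K_{2^k}$-free graph $G$ satisfies $\alpha_F(G)<n^{1/k-2^k\delta}$. One may assume $\delta<1/(k\,2^k)$, else the target bound is already trivial.

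The core of the argument is to locate an $s$-set $X\subseteq V(G)$ such that $|N(X)|$ is large and $G[N(X)]$ is $K_{2^{k-1}}$-free; the induction hypothesis applied to $G[N(X)]$ will then finish. First I bound common neighborhoods of $(2^{k-1})$-cliques. If $Q\subseteq V(G)$ is any such clique, then $G[N(Q)]$ is $K_{2^{k-1}}$-free (else $Q$ together with a $K_{2^{k-1}}$ in $N(Q)$ would form a $K_{2^k}$ in $G$), so by induction $\alpha_F(G[N(Q)])\geq |N(Q)|^{1/(k-1)-2^{k-1}\delta}$. Combined with $\alpha_F(G[N(Q)])\leq \alpha_F(G)<n^{1/k-2^k\delta}$, this forces $|N(Q)|<n^\eta$, where
$$\eta := \frac{1/k - 2^k\delta}{1/(k-1) - 2^{k-1}\delta}$$
is defined precisely so that $\eta\cdot(1/(k-1)-2^{k-1}\delta) = 1/k - 2^k\delta$.

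Next I apply Lemma~\ref{claim:sampling} with $\beta=1-\eta$; a short calculation confirms $\delta<\beta<1$ and $\alpha_F(G)<0.5\,n^{\beta-2\delta}$ in the nontrivial range of $\delta$. This produces at least $0.5\,n^{(\eta+\delta)s}$ ``good'' $s$-sets $X$ with $|N(X)|\geq n^\eta$. I then count pairs $(X,Q)$ where $X$ is a good $s$-set and $Q$ is a $(2^{k-1})$-clique contained in $N(X)$: fixing $Q$ first, there are at most $n^{2^{k-1}}$ choices of $Q$, each yielding at most $|N(Q)|^s/s!<n^{\eta s}/s!$ choices of $X\subseteq N(Q)$. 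Averaging over good $X$'s shows that some good $X$ contains at most $O(n^{2^{k-1}-\delta s}/s!)$ copies of $K_{2^{k-1}}$ inside $N(X)$. With $s=\lceil 1/\delta^3\rceil$ and $\delta<1/(k\,2^k)$, one has $\delta s\geq 1/\delta^2>2^{k-1}$, so this count is $o(1)$ for large $n$ and some good $X$ satisfies $k_{2^{k-1}}(G[N(X)])=0$. Applying the induction hypothesis to $G[N(X)]$, which is $K_{2^{k-1}}$-free on at least $n^\eta$ vertices, yields $\alpha_F(G[N(X)])\geq n^{\eta(1/(k-1)-2^{k-1}\delta)}=n^{1/k-2^k\delta}$ by the defining equation of $\eta$, contradicting the assumption on $\alpha_F(G)$.

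The main delicacy is the choice of the intermediate exponent $\eta$: it must be pinned down precisely so that the inductive bound on $\alpha_F(G[N(X)])$ hits the target exponent exactly, and it simultaneously governs (via $\beta=1-\eta$) the application of Lemma~\ref{claim:sampling}. The second, more technical point is verifying that $s=\lceil 1/\delta^3\rceil$ is large enough for the counting argument to absorb the factor $n^{2^{k-1}}$ coming from the number of $(2^{k-1})$-cliques; the cube in the exponent of $\delta$ is loose but provides a choice of $s$ that works uniformly in $k$ throughout the nontrivial range $\delta<1/(k\,2^k)$.
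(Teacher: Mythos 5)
Your proof is correct, and it takes a genuinely different path from the paper's argument. The paper's proof also proceeds by induction on $k$, but its inductive step is structured around Lemma~\ref{lem:main}: given the contradiction hypothesis, it first extracts a pair $U,W$ with $|U|,|W|$ polynomially large and edge density at least $n^{-\delta^2}$ between them, and then applies dependent random choice inside $W$ to produce a set $A$ in which every $2^{k-1}$ vertices have many common neighbours in $U$; it concludes by a case split depending on whether $G[A]$ is $K_{2^{k-1}}$-free.

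Your argument bypasses both Lemma~\ref{lem:main} and dependent random choice. The two key moves are (i) using the induction hypothesis a first time to show that every $K_{2^{k-1}}$-clique $Q$ in $G$ has $|N(Q)|<n^\eta$, where $\eta$ is set exactly so that the inductive lower bound $|N(Q)|^{1/(k-1)-2^{k-1}\delta}$ matches the target $n^{1/k-2^k\delta}$; and (ii) double-counting pairs $(X,Q)$ of good $s$-sets against $K_{2^{k-1}}$-cliques, using the symmetry $Q\subseteq N(X)\iff X\subseteq N(Q)$. Only Lemma~\ref{claim:sampling} (with $\beta=1-\eta$) is needed from the paper. This is arguably a cleaner route: it replaces the two-level mechanism of Lemma~\ref{lem:main} plus DRC with a single application of the basic sampling lemma and an elementary averaging. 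What the paper's approach buys is a reusable structural ingredient --- Lemma~\ref{lem:main} and the dependent-random-choice step are stated in a form that could be deployed for other host structures --- but for this specific theorem your streamlined proof is equally valid. Your side conditions were also verified: with $\delta<1/(k2^k)$ the algebra gives $a(b+1)<(1-2\delta)b$ (so $\delta<\beta<1$ and $\alpha_F(G)<0.5n^{\beta-2\delta}$ hold), and $\delta s\geq 1/\delta^2>(k2^k)^2>2^{k-1}$ makes the averaged clique count $o(1)$. The one place you should spell out explicitly is the application of the induction hypothesis to $G[N(Q)]$: if $|N(Q)|$ falls below the (constant) threshold at level $k-1$, the induction hypothesis does not apply directly, but then $|N(Q)|$ is bounded and therefore less than $n^\eta$ for $n$ large anyway, so the bound $|N(Q)|<n^\eta$ still holds; this is a standard but worth-a-sentence remark.
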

    Before proving Theorem \ref{thm:clique vs Ktt}, let us use it to prove Theorem \ref{thm:Ktt vs Kr lower bound}.
    \begin{proof}[Proof of Theorem \ref{thm:Ktt vs Kr lower bound}]
    Let $r \geq 4$ and put $k := \lceil \log_2r  \rceil$, so that $r \leq 2^k$. Put $\delta := 2t^{-1/4}$ and $s = \lceil \frac{1}{\delta^3} \rceil$. If $t$ is large enough then $\delta \geq \frac{6\log t}{t}$ and $s \leq \frac{\delta t}{3}$. Then by Lemma \ref{lem:domination}, we have $\gamma_s(F) \leq \delta \cdot v(F)$. Hence, by Theorem \ref{thm:clique vs Ktt}, we have $f_{F,K_r}(n) \geq f_{F,K_{2^k}}(n) \geq n^{\frac{1}{k} - 2^k\delta} \geq n^{\frac{1}{k}-\varepsilon}$, where the last inequality holds if $t$ is large enough. 
    \end{proof}
    
    In the rest of this subsection, we prove Theorem \ref{thm:clique vs Ktt}.
	In the following lemma, $d(U,W)$ stands for the proportion of pairs $(u,w)\in U\times W$ for which $uw$ is an edge.
	\begin{lemma}\label{lem:main}
		Let $\delta,\varepsilon,\beta > 0$, Let $s \geq 2(\frac{\beta}{\varepsilon} + 1)$ be an integer, and let $F$ be a graph with $\gamma_s(F) \leq \delta \cdot v(F)$.
		Let $n$ be sufficiently large and let $G$ be an $n$-vertex graph with $\alpha_F(G) < 0.5n^{\beta-2\delta}$. Then there are 
		$U,W \subseteq V(G)$ with $|U| \geq \Omega(n^{1-\beta - \frac{2\beta(\beta+\varepsilon)}{\varepsilon s}})$, $|W| \geq n^{1-\beta}$ and $d(U,W) \geq n^{-\varepsilon}$. 
	\end{lemma}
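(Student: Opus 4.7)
The plan is to apply Lemma~\ref{claim:sampling} with the given parameters $\delta,\beta,s$, producing a family $\mathcal{X}$ of at least $\Xi:=0.5\,n^{(1-\beta+\delta)s}$ ``good'' $s$-sets $X\subseteq V(G)$, each with $|N(X)|\geq n^{1-\beta}$. For each $X\in\mathcal X$ set $W_X:=N(X)$ and $U_X:=\{v\in V(G) : |N(v)\cap W_X|\geq n^{-\varepsilon}|W_X|\}$, so that the density condition $d(U_X,W_X)\geq n^{-\varepsilon}$ and the size bound $|W_X|\geq n^{1-\beta}$ are automatic. The problem then reduces to finding a single $X^*\in\mathcal{X}$ with $|U_{X^*}|\geq \Omega(n^{1-\beta-\eta})$, where $\eta:=\frac{2\beta(\beta+\varepsilon)}{\varepsilon s}$, after which we take $U:=U_{X^*}$ and $W:=W_{X^*}$.

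To locate $X^*$, I would use a double-counting and averaging argument: it suffices to prove $\sum_{X\in\mathcal{X}}|U_X|\geq \Omega(\Xi\,n^{1-\beta-\eta})$, and then $X^*$ exists by pigeonhole. Swapping the order of summation gives $\sum_{X\in\mathcal{X}}|U_X|=\sum_{v\in V(G)} G_v$, where $G_v:=|\{X\in\mathcal X : v\in U_X\}|$ counts those good $s$-sets $X$ for which $X\cup\{v\}$ has common neighborhood at least $n^{-\varepsilon}|N(X)|$ (i.e., a ``medium'' $(s+1)$-set extending $X$). The task is thus to lower bound $\sum_v G_v$.

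The main obstacle is that one cannot just reapply Lemma~\ref{claim:sampling} with $s+1$ in place of $s$, since the hypothesis $\gamma_{s+1}(F)\leq \delta\,v(F)$ is not available. Instead, the bound must be extracted directly from the codegree data already supplied by Lemma~\ref{claim:sampling}. In particular, from $|N(X)|\geq n^{1-\beta}$ one obtains both $\sum_{X\in\mathcal X}|N(X)|\geq \Xi\,n^{1-\beta}$ and $\sum_{X\in\mathcal{X}}|N(X)|^2\geq \Xi\,n^{2-2\beta}$; the latter rewrites (by switching summation) as a lower bound on $\sum_{v_1,v_2}|\{X\in\mathcal X : v_1,v_2\in N(X)\}|$. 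A H\"older / Cauchy--Schwarz inequality applied to these two moments, combined with a tail estimate that discards vertices whose codegree with a typical $N(X)$ falls below the density threshold, should produce the required lower bound on $\sum_v G_v$. The precise exponent $\eta=\frac{2\beta(\beta+\varepsilon)}{\varepsilon s}$ and the hypothesis $s\geq 2(\beta/\varepsilon+1)$ are expected to arise from balancing the terms in this inequality: the factor $\frac{2\beta}{s}$ reflects the $s$-fold concentration coming from $\Xi$, while $\frac{\beta+\varepsilon}{\varepsilon}$ captures the ratio between the density threshold $n^{-\varepsilon}$ and the neighborhood-size threshold $n^{1-\beta}$.

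A minor but non-trivial technicality I foresee is that $|N(X)|$ may substantially exceed the guaranteed lower bound $n^{1-\beta}$, making the density threshold $n^{-\varepsilon}|N(X)|$ correspondingly stricter. This can be handled either by passing to a dyadic sub-family of $\mathcal{X}$ on which $|N(X)|$ lies in a short range (losing only a polylogarithmic factor, which is absorbed in the $\Omega$), or by replacing $W$ with a uniformly random subset of $N(X^*)$ of size exactly $n^{1-\beta}$ and using a standard concentration estimate (Chernoff for hypergeometric distribution) to transfer heaviness from $N(X^*)$ to the sub-sample.
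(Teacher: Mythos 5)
Your opening move is correct: Lemma~\ref{claim:sampling} supplies at least $0.5\,n^{(1-\beta+\delta)s}$ good $s$-sets, and the eventual $U,W$ should indeed be of the form ``$W$ is a common neighbourhood, $U$ is the set of vertices which are $n^{-\varepsilon}$-dense into $W$,'' so the density condition and the size of $W$ come for free. But the middle of your argument has a genuine structural gap, not just a missing computation.

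The critical issue is that you fix the base of $W$ to be an $s$-set $X\in\mathcal X$, and then hope an averaging/Cauchy--Schwarz argument over $\sum_X|U_X|$ will deliver the exponent $\eta=\frac{2\beta(\beta+\varepsilon)}{\varepsilon s}$. The paper does something qualitatively different: it takes $W=N(Y)$ for a set $Y$ of some \emph{smaller} size $k\leq \tfrac{\beta}{\varepsilon}+1\leq s/2$, found by an extremal (``maximal $k$'') argument. Concretely, one looks for the largest $k\leq s$ admitting a $k$-set $Y$ that is still contained in $\gtrsim 2^{-k}n^{-k}$ fraction of the good $s$-sets while $|N(Y)|<n^{1-(k-1)\varepsilon}$. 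Maximality of $k$ forces the following dichotomy: for at least half of the good $s$-sets $X\supseteq Y$, \emph{every} vertex of $X\setminus Y$ already lies in $U$ (since otherwise, some $v\notin U$ would extend $Y$ to a $(k+1)$-set contradicting maximality). This yields $|U|^{s-k}\geq \tfrac12 g(Y)$, and because $s-k\geq s/2$, one obtains $|U|\geq\Omega\bigl(n^{1-\beta-\frac{\beta k}{s-k}}\bigr)\geq\Omega\bigl(n^{1-\beta-\frac{2\beta(\beta+\varepsilon)}{\varepsilon s}}\bigr)$. The factor $s/2$ in the denominator of $\eta$ is exactly the slack $s-k$ that your setup discards by insisting on $|X|=s$: with $Y$ of size $s$ there is no extension count to exploit, and the argument that $|U|$ is large has no engine.

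Your proposed averaging estimate $\sum_X|U_X|\geq\Omega(\Xi\,n^{1-\beta-\eta})$ is asserted without a proof sketch that actually closes; the two moments on $|N(X)|$ that you list do not say anything about which vertices have many neighbours \emph{inside} $N(X)$ (being covered by $N(X)$ is not the same as being dense into $N(X)$), and the sentence ``the precise exponent $\dots$ [is] expected to arise from balancing the terms'' is precisely where a proof would need to exist but does not. So I would grade this as: right first step and right final shape of $U,W$, but the central mechanism--the iterative/extremal choice of a small $Y$ together with the binomial count $|U|^{s-k}\geq\tfrac12 g(Y)$--is missing, and the Cauchy--Schwarz replacement you gesture at is not a viable substitute as stated.
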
 
	\begin{proof}
		We may assume that $\delta<\beta<1$, for otherwise the statement of the lemma is trivial. Recall that we call a set $X \subseteq V(G)$ of size $s$ {\em good} if $|N(X)| \geq n^{1-\beta}$. For a set $Y$, let $g(Y)$ denote the number of good $s$-sets containing $Y$. Also, let $g := g(\emptyset)$ be the total number of good $s$-sets. By Lemma \ref{claim:sampling}, we have $g \geq n^{(1-\beta)s}$. 
		Let $1 \leq k \leq s$ be maximal such that there is a set $Y$ of size $k$ satisfying $g(Y) \geq 2^{-k+1} n^{-k} \cdot g$ and $|N(Y)| < n^{1-(k-1)\varepsilon}$. 
		Note that $k$ is well-defined, because by taking $v$ to be a vertex with $g(\{v\}) \geq g/n$ (such a vertex exists by averaging) and setting $Y = \{v\}$, we get $|Y| = 1$, $g(Y) \geq g/n$ and $|N(Y)| < n$. 
		Also, since $g(Y) > 0$, we must have $|N(Y)| \geq n^{1-\beta}$ (because $Y$ is contained in a good $s$-set). Since $|N(Y)| < n^{1-(k-1)\varepsilon}$, it follows that $k < \frac{\beta}{\varepsilon}+1 \leq s/2$. 
		Let $W = N(Y)$, and let $U$ be the set of all vertices $v\in V(G)\setminus Y$ such that 
		$|N(Y \cup \{v\})| \geq |N(Y)|/n^{\varepsilon}$. Suppose first that at least $\frac{1}{2}g(Y)$ of the good $s$-sets $X$ containing $Y$ satisfy that $X \setminus (Y \cup U) \neq \emptyset$. Then, by averaging, there is a vertex $v \notin Y \cup U$ such that there are at least 
		$\frac{1}{2n} \cdot g(Y) \geq 2^{-k} n^{-(k+1)} \cdot g$ good $s$-sets containing $Y \cup \{v\}$. Also, since $v \notin U$, we have
		$|N(Y \cup \{v\})| < |N(Y)|/n^{\varepsilon} < n^{1-k\varepsilon}$. Hence, the set $Y \cup \{v\}$ of size $k+1$ contradicts the maximality of $k$. It follows that at least $\frac{1}{2}g(Y)$ of the good $s$-sets $X$ containing $Y$ satisfy that $X \setminus Y \subseteq U$. 
		Hence, $|U|^{s-k} \geq \binom{|U|}{s-k} \geq \frac{1}{2}g(Y)$.
		On the other hand, 
		$\frac{1}{2}g(Y) \geq 2^{-k} n^{-k} \cdot g \geq 2^{-k} n^{(1-\beta)s - k}$, so we get that 
		$$
		|U| \geq 2^{-\frac{k}{s-k}} \cdot n^{\frac{(1-\beta)s-k}{s-k}} = \Omega(n^{1-\beta - \frac{\beta k}{s-k}}) \geq 
		\Omega(n^{1-\beta - \frac{2\beta(\beta+\varepsilon)}{\varepsilon s}}),
		$$
		using that $k \leq \frac{\beta}{\varepsilon} + 1 \leq s/2$. 
		Also, $|W| \geq n^{1-\beta}$ because $W = N(Y)$ and $g(Y) > 0$. 
		Finally, $|N(v)\cap W| \geq |W|/n^{\varepsilon}$ for every $v \in U$ by the definition of $U$ and as $W = N(Y)$. Hence,
		$d(U,W) \geq n^{-\varepsilon}$. This completes the proof of the lemma. 
	\end{proof}
	\begin{proof}[Proof of Theorem \ref{thm:clique vs Ktt}] 
        We prove the theorem by induction on $k$. The case $k=1$ is trivial, so let $k \geq 2$. 
        We may assume that $\delta < 2^{-k}/k$, else the assertion is trivial. 
        Let $G$ be an $n$-vertex $K_{2^k}$-free graph. Our goal is to show that $G$ has an $F$-free induced subgraph on at least $n^{\frac{1}{k} - 2^k\delta}$ vertices. 
		If this does not hold, then by Lemma \ref{lem:main} with $\beta := \frac{1}{k}$ and $\varepsilon := \delta^2$, there are subsets $U,W \subseteq V(G)$ with $|U| \geq \Omega(n^{1-\beta - \frac{2\beta(\beta+\varepsilon)}{\varepsilon s}}) \geq n^{1-\frac{1}{k} - \frac{1}{\varepsilon s}} \geq n^{1 - \frac{1}{k} - \delta}$, $|W| \geq n^{1-\frac{1}{k}}$, and $d(U,W) \geq n^{-\varepsilon} = n^{-\delta^2}$. 
        We may apply Lemma \ref{lem:main} because $s = \lceil \frac{1}{\delta^3} \rceil \geq \frac{1}{\varepsilon} + 2 \geq 2(\frac{\beta}{\varepsilon} + 1)$.
        Next, we prove the following claim using the so-called dependent random choice method (see, e.g., \cite{FoxSudakov} for a detailed description of this method). 
        
		\begin{claim}\label{claim:DRC}
			There is a subset $A \subseteq W$ of size $|A| > n^{1-\frac{1}{k} - 2^{k-1}\delta} - 1$ such that every $2^{k-1}$ vertices in $A$ have at least $|U| \cdot n^{-2\delta}$ common neighbours.
		\end{claim}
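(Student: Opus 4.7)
The plan is to prove Claim \ref{claim:DRC} by a standard dependent random choice argument. Set $t := \lfloor 2^{k-1}/\delta \rfloor$, and sample $v_1,\dots,v_t \in U$ uniformly at random and independently with replacement. Define
\[
A_0 := \bigl\{ w \in W : \{v_1,\dots,v_t\} \subseteq N(w) \bigr\}.
\]
The remainder of the argument estimates $A_0$ from two sides: showing it is large in expectation, and that it contains few ``bad'' $2^{k-1}$-subsets, where $S \subseteq W$ with $|S| = 2^{k-1}$ is called \emph{bad} if $|N_U(S)| < |U|\,n^{-2\delta}$.

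First I would bound $\mathbb{E}|A_0|$ from below using Jensen's inequality applied to the convex function $x \mapsto x^t$:
\[
\mathbb{E}|A_0| \;=\; \sum_{w \in W} \left(\frac{|N(w) \cap U|}{|U|}\right)^{\!t} \;\geq\; |W| \cdot d(U,W)^t \;\geq\; n^{1-\frac{1}{k}} \cdot n^{-t\delta^2} \;\geq\; n^{1-\frac{1}{k}-2^{k-1}\delta},
\]
using $t \delta \leq 2^{k-1}$ and $d(U,W) \geq n^{-\delta^2}$ in the last step.

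Next I would bound the expected number $Y$ of bad $2^{k-1}$-subsets of $A_0$. For a bad $S$, the probability that $S \subseteq A_0$ equals $(|N_U(S)|/|U|)^t < n^{-2\delta t}$, so $\mathbb{E}Y \leq |W|^{2^{k-1}} n^{-2\delta t} \leq n^{2^{k-1}-2\delta t}$. Since $t \geq 2^{k-1}/\delta - 1$ gives $2\delta t \geq 2^k - 2\delta$, this expectation is at most $n^{-2^{k-1}+2\delta} = o(1)$ for $k \geq 2$. Fixing an outcome of the sample with $|A_0| - Y \geq \mathbb{E}[|A_0|] - \mathbb{E}[Y] > n^{1-\frac{1}{k}-2^{k-1}\delta} - 1$, and then deleting one vertex from each bad subset, yields the desired $A \subseteq A_0$ of size greater than $n^{1-\frac{1}{k}-2^{k-1}\delta} - 1$ in which every $2^{k-1}$-subset has at least $|U|\,n^{-2\delta}$ common neighbors in $U$.

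The only real subtlety is in tuning $t$: the parameter $t \approx 2^{k-1}/\delta$ is essentially forced by simultaneously requiring Jensen's bound to reach the target exponent $2^{k-1}\delta$ while driving the bad-subset expectation to $o(1)$. This delicate matching is precisely what produces the factor $2^{k-1}$ appearing in the claim's bound; any materially smaller or larger $t$ would break one of the two estimates.
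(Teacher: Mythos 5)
Your proposal is correct and follows essentially the same dependent random choice argument as the paper: same sample size $t = \lfloor 2^{k-1}/\delta \rfloor$, same Jensen lower bound $\mathbb{E}|A_0| \geq |W|\, d(U,W)^t \geq n^{1-\frac{1}{k}-2^{k-1}\delta}$, same union bound over bad $2^{k-1}$-subsets yielding expectation below $1$, and the same final step of deleting one vertex per bad subset and invoking linearity of expectation.
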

		\begin{proof}
			Sample 
			$q := \lfloor \frac{2^{k-1}}{\delta} \rfloor$ vertices 
			$u_1,\dots,u_q \in U$ uniformly at random and independently (with repetitions allowed), and let $N = N(\{u_1,\dots,u_q\}) \cap W$. Since $d(U,W) \geq n^{-\delta^2}$, by convexity we get
			$$
			\mathbb{E}[|N|] \geq |W| \cdot n^{-q\delta^2} \geq |W| \cdot n^{-2^{k-1}\delta} \geq
			n^{1-\frac{1}{k} - 2^{k-1}\delta}.
			$$
			Also, for each subset $K \subseteq W$ of size $|K| = 2^{k-1}$ with $|N(K) \cap U| \leq |U| \cdot n^{-2\delta}$, we have 
			$$
			\mathbb{P}[K \subseteq N] \leq n^{-2q\delta} \leq n^{-2^{k-1}} \leq |W|^{-2^{k-1}}.
			$$
			Hence, the expected number of subsets $K \subseteq N$ of size $2^{k-1}$ with $|N(K) \cap U| \leq |U| \cdot n^{-2\delta}$ is less than~$1$. 
            Delete one vertex from each such subset to obtain $A$.
            The claim follows by linearity \nolinebreak of \nolinebreak expectation. 
		\end{proof}
		Let $A$ be the subset given by Claim \ref{claim:DRC}. If $G[A]$ is $K_{2^{k-1}}$-free, then by the induction hypothesis, $G[A]$ contains an $F$-free induced subgraph of order
		$$
		|A|^{\frac{1}{k-1} - 2^{k-1}\delta} \geq \left( 0.5n^{1 - \frac{1}{k} - 2^{k-1}\delta} \right)^{\frac{1}{k-1} - 2^{k-1}\delta} \geq 
		n^{\frac{1}{k} - 2^{k-1} \delta - 2^{k-1}\delta} = n^{\frac{1}{k} - 2^k\delta},
		$$ 
		as required. 
		
		Suppose now that $G[A]$ contains a clique $K$ of size $2^{k-1}$. Let $B$ be the common neighbourhood of $K$, so $|B| \geq |U| n^{-2\delta} \geq 
		n^{1 - \frac{1}{k} - 3\delta}$. Since $G$ is $K_{2^k}$-free, $G[B]$ is $K_{2^{k-1}}$-free. Hence, by the induction hypothesis, $G[B]$ contains an $F$-free induced subgraph of order
		$$
		|B|^{\frac{1}{k-1} - 2^{k-1}\delta} \geq 
		\left( n^{1 - \frac{1}{k} - 3\delta} \right)^{\frac{1}{k-1} - 2^{k-1}\delta} \geq 
        n^{\frac{1}{k} - 2^k\delta},
		$$ 
		as required. This completes the proof. 
	\end{proof}

\section{Upper bound constructions} \label{sec:construction}

\subsection{Proof of Theorem \ref{thm:Ktt vs Kr upper bound}}
It suffices to prove Theorem \ref{thm:Ktt vs Kr upper bound} for $F = K_{t,t}$ (since every bipartite graph is contained in $K_{t,t}$ for a sufficiently large $t$). Hence, Theorem \ref{thm:Ktt vs Kr upper bound} follows from the following result.

 \begin{theorem}\label{thm:Ktt vs Kr construction}
     There is an absolute constant $C$ such that for every $k\geq 2$ and every $t$ we have $f_{K_{t,t},K_{2^k}}(n)=O(n^{C/k})$.
 \end{theorem}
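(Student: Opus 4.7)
The plan is to construct, for each $k\ge 2$ and $t$, a $K_{2^k}$-free graph $G$ on $n$ vertices such that every induced subgraph of size at least $n^{C/k}$ contains $K_{t,t}$, where $C>0$ is an absolute constant (the implicit constant in $O(n^{C/k})$ may depend on $t$ and $k$, but the exponent $C/k$ should not).

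My first move would be to set up a recursive construction. Define graphs $G_k$ on $n_k$ vertices, inductively in $k$, satisfying (i) $G_k$ is $K_{2^k}$-free and (ii) $f_{K_{t,t}}(G_k) \le n_k^{C/k}$. For the base case (say $k=2$) I would construct $G_2$ directly, using either a random $K_4$-free graph of carefully chosen density or a pseudo-random algebraic construction, combined with a Kővári--Sós--Turán argument to force $K_{t,t}$ in any sufficiently dense induced subgraph.

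For the inductive step, assume $G_{k-1}$ on $n_{k-1}$ vertices has been constructed. I would build $G_k$ by taking $N$ disjoint copies of $G_{k-1}$ and inserting between each pair of copies a bipartite ``gluing'' graph $H$. The key point is to choose $H$ to be a random bipartite graph at the Kővári--Sós--Turán threshold for $K_{2^{k-1},2^{k-1}}$, i.e.\ density $p_k\sim n_{k-1}^{-1/2^{k-1}}$, and then delete a few edges so that $H$ is $K_{2^{k-1},2^{k-1}}$-free. Since any clique of $G_k$ spanning two copies consists of cliques $K_0,K_1$ in the two copies together with a complete bipartite $K_{|K_0|,|K_1|}\subseteq H$, the $K_{2^{k-1},2^{k-1}}$-freeness of $H$ forces $\min(|K_0|,|K_1|)\le 2^{k-1}-1$, and combined with $|K_i|\le \omega(G_{k-1})\le 2^{k-1}-1$ this yields $\omega(G_k)\le 2^k-2<2^k$.

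To bound $f_{K_{t,t}}(G_k)$, observe that if $S\subseteq V(G_k)$ is $K_{t,t}$-free, then $|S\cap V(G_{k-1}^{(i)})|\le m_{k-1}:=n_{k-1}^{C/(k-1)}$ for every copy $i$ (by induction), and in addition the bipartite Zarankiewicz threshold for $H$ implies that for each pair of copies at least one of the two intersections must have size below roughly $p_k^{-t/2}=n_{k-1}^{t/2^k}$. By choosing $N$ and the overall scaling $n_k\sim n_{k-1}^{k/(k-1)}$ appropriately, the resulting recursion for $\log m_k/\log n_k$ should solve to an exponent of the form $C/k$ with $C$ absolute. The main obstacle is exactly this parameter balancing: $H$ must be simultaneously sparse enough to avoid $K_{2^{k-1},2^{k-1}}$ (and thus $K_{2^k}$) and dense enough to force $K_{t,t}$ across copies, while the growth $n_k=n_k(n_{k-1})$ has to be chosen so that the exponent shrinks at the right $1/k$ rate uniformly in $t$.
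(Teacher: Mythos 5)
Your construction runs into a genuine gap at the very step where you try to bound $\omega(G_k)$. You claim that the $K_{2^{k-1},2^{k-1}}$-freeness of the gluing graphs $H$ forces $\min(|K_0|,|K_1|)\le 2^{k-1}-1$, but this constraint is vacuous: each copy of $G_{k-1}$ is already $K_{2^{k-1}}$-free, so \emph{every} clique trace in a single copy has size at most $2^{k-1}-1$, and removing $K_{2^{k-1},2^{k-1}}$'s from $H$ never triggers. The real problem, which your argument does not address at all, is cliques spanning three or more copies. A clique picking one vertex from each of $m$ different copies only requires one edge per pair of copies in the corresponding gluing graphs, and with the density $p_k\sim n_{k-1}^{-1/2^{k-1}}$ you propose, random bipartite graphs support pairwise-connected transversals across far more than $2^k$ copies (the expected count $\binom{N}{m}n_{k-1}^m p_k^{\binom m2}$ only drops below $1$ for $m$ considerably larger than $2^k$). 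So $\omega(G_k)$ is not bounded by $2^k-2$, and the induction collapses.

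This is precisely the difficulty that the paper flags and deliberately avoids: it remarks that taking a union or product of graphs can blow up the clique number uncontrollably, and instead it tracks the \emph{local chromatic number} (every $s$-vertex subgraph is $(r-1)$-colourable), which is submultiplicative under both edge-union and lexicographic product. The paper sets $\rho_r := \inf S_r$ for this local-chromatic class, proves a recursion $\rho_{2^k}\le\frac12\rho_{2^i}+2^{i-\lfloor k/2\rfloor}$ by taking a lexicographic square $H\cdot H$ where $H$ is the union of a previously constructed graph with a sparse random graph, and solves the recursion to get exponent $O(1/k)$. To salvage your approach you would either need a much more careful analysis of transversal cliques (which looks hard to push down to $2^k$ uniformly in $t$), or you would have to abandon the clique-number invariant and track something like local chromatic number, which essentially forces you back onto the paper's path.
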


	For graphs $G,H$, the {\em lexicographic product} $G \cdot H$ is the graph obtained from $G$ by substituting a copy of $H$ for each vertex of $G$ (and replacing edges of $G$ with complete bipartite graphs). It is easy to see that $\omega(G \cdot H) = \omega(G) \cdot \omega(H)$ and $\chi(G \cdot H) \leq \chi(G) \cdot \chi(H)$.
	\begin{lemma}\label{lem:substitution 2 graphs}
		For any positive integer $t$ and graphs $G$ and $H$, we have 
        $$\alpha_{K_{t,t}}(G \cdot H) \leq \alpha(G)\alpha_{K_{t,t}}(H) + (t-1)\alpha_{K_{t,t}}(G).$$ 
        In particular, $\alpha_{K_{t,t}}(G\cdot G)\leq t\alpha(G)\alpha_{K_{t,t}}(G)$.
	\end{lemma}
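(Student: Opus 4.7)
The plan is to analyze an arbitrary $K_{t,t}$-free induced subgraph $S$ of $G\cdot H$ through its fibers over $V(G)$. For each $v\in V(G)$, let $S_v=\{w\in V(H):(v,w)\in S\}$, so that $|S|=\sum_{v\in V(G)}|S_v|$ and the copy of $H$ sitting above $v$ contributes $H[S_v]$ to $S$. I would split $V(G)$ according to fiber size: let $V_{\mathrm{big}}=\{v:|S_v|\geq t\}$ and $V_{\mathrm{small}}=\{v:1\leq |S_v|\leq t-1\}$, ignoring vertices with empty fibers.

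The two key structural observations both exploit the fact that in $G\cdot H$ any edge $uv\in E(G)$ produces a complete bipartite subgraph between $\{u\}\times V(H)$ and $\{v\}\times V(H)$. First, if $u,v\in V_{\mathrm{big}}$ are adjacent in $G$, then $S_u$ and $S_v$ span a complete bipartite subgraph of $S$ with both sides of size at least $t$, producing a forbidden $K_{t,t}$; hence $V_{\mathrm{big}}$ is independent in $G$, giving $|V_{\mathrm{big}}|\leq\alpha(G)$. Second, if $V_{\mathrm{small}}\cup V_{\mathrm{big}}$ contained a copy of $K_{t,t}$ in $G$, picking one element from each of the $2t$ corresponding (nonempty) fibers would lift that copy to a $K_{t,t}$ inside $S$; therefore $V_{\mathrm{small}}\cup V_{\mathrm{big}}$ is $K_{t,t}$-free in $G$, which yields $|V_{\mathrm{small}}|\leq\alpha_{K_{t,t}}(G)$.

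To finish, each fiber $S_v$ induces an isomorphic copy of $H[S_v]$ inside the $K_{t,t}$-free graph $S$, so $H[S_v]$ is $K_{t,t}$-free and $|S_v|\leq\alpha_{K_{t,t}}(H)$ for every $v$, while the trivial bound $|S_v|\leq t-1$ is stronger on $V_{\mathrm{small}}$. Summing,
$$|S|\leq |V_{\mathrm{big}}|\cdot\alpha_{K_{t,t}}(H)+|V_{\mathrm{small}}|\cdot(t-1)\leq\alpha(G)\,\alpha_{K_{t,t}}(H)+(t-1)\,\alpha_{K_{t,t}}(G),$$
which is the claimed inequality. For the ``in particular'' statement with $H=G$, I would invoke the trivial bounds $\alpha(G)\leq\alpha_{K_{t,t}}(G)$ (any independent set is $K_{t,t}$-free) and $\alpha(G)\geq 1$ (assuming $G$ is nonempty) to collapse $\alpha(G)\,\alpha_{K_{t,t}}(G)+(t-1)\,\alpha_{K_{t,t}}(G)=(\alpha(G)+t-1)\,\alpha_{K_{t,t}}(G)$ into $t\,\alpha(G)\,\alpha_{K_{t,t}}(G)$.

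The only potentially subtle step is recognizing the correct threshold for splitting the fibers: the value $t$ is forced, because it is precisely the size at which two fibers sitting on an $E(G)$-edge automatically create a $K_{t,t}$. Once this dichotomy is set up, both bounds $|V_{\mathrm{big}}|\leq\alpha(G)$ and $|V_{\mathrm{small}}|\leq\alpha_{K_{t,t}}(G)$ follow immediately from the lexicographic product structure, and the rest of the argument is straightforward bookkeeping.
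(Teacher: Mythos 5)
Your proof is correct and is essentially identical to the one in the paper: both decompose the $K_{t,t}$-free set by fiber size with threshold $t$, observe that the set of large fibers is independent in $G$ and that the small-fiber vertices span a $K_{t,t}$-free subgraph of $G$, and then bound the fiber sizes by $\alpha_{K_{t,t}}(H)$ and $t-1$ respectively. The only cosmetic difference is that you note the stronger fact that all nonempty fibers together are $K_{t,t}$-free in $G$, whereas the paper only states it for the small-fiber vertices, but this does not change the argument.
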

	\begin{proof}
		For each $v \in V(G)$, let $A_v$ be the vertex set of the copy of $H$ corresponding to $v$. Let $X \subseteq V(G\cdot H)$ such that $(G\cdot H)[X]$ is $K_{t,t}$-free. 
		Let $I = \{v\in V(G) : |A_v \cap X| \geq t\}$ and let $J = \{v\in V(G) : 0 < |A_v \cap X| \leq t-1\}$. Then $I$ is an independent set in $G$, and $G[J]$ is $K_{t,t}$-free. Also, for each $v \in I$, $H[A_v \cap X]$ is $K_{t,t}$-free. It follows that 
		$|X| \leq |I| \cdot \alpha_{K_{t,t}}(H) + (t-1)|J|$, implying the lemma. 
	\end{proof}
	\noindent

    We construct $K_{2^k}$-free graphs with no large $K_{t,t}$-free set by induction on $k$. Roughly speaking, we start with a $K_{2^{k/2}}$-free graph $G_0$ with no large $K_{t,t}$-free set,
     take a union of it with a random graph on the same vertex set to obtain a graph $H$ (where the random graph ensures that $H$ has small independence number), and then consider $H\cdot H$. Then $H$ has no large $K_{t,t}$-free set by Lemma~\ref{lem:substitution 2 graphs}. Unfortunately, $H$ may contain a clique of size significantly greater than $2^{k/2}$, which means that $H\cdot H$ may contain a clique of size significantly greater than $2^k$.

	In order to overcome this issue, instead of considering the clique number, we consider the property of having no subgraph on $O(1)$ vertices with large chromatic number. This is more convenient because the chromatic number of the union of two graphs is at most the product of their chromatic numbers, whereas the clique number can be exponential in the clique numbers.

 \begin{definition}
     For an integer $r\geq 3$, let $S_r$ be the set of all $\rho\geq 0$ with the property that for all positive integers $t,s$ there is some $n_0=n_0(\rho,r,t,s)$ such that for all $n\geq n_0$ there exists an $n$-vertex graph $G$ in which every subgraph on $s$ vertices is $(r-1)$-colourable and which has $\alpha_{K_{t,t}}(G)\leq n^{\rho}$.

     Furthermore, let $\rho_r=\textrm{inf}(S_r)$.
 \end{definition}
\noindent 
Note that $1\in S_r$, so $\rho_r$ is well-defined and $\rho_r\leq 1$.

\begin{lemma} \label{lem:product chromatic}
    Let $H$ be a graph (on at least $s$ vertices) in which every subgraph on $s$ vertices is $r$-colourable. Then every subgraph of $H\cdot H$ on $s$ vertices is $r^2$-colourable.
\end{lemma}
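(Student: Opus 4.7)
The plan is to work directly from the definition of the lexicographic product. Given any set $V_0 \subseteq V(H \cdot H)$ of size $s$, write $V_0 = \{(v,x) : v \in T,\ x \in B_v\}$, where $T \subseteq V(H)$ collects the distinct first coordinates appearing in $V_0$ and, for each $v \in T$, $B_v \subseteq V(H)$ is the (non-empty) set of second coordinates appearing with $v$. By construction, $\sum_{v \in T} |B_v| = s$, so $|T| \leq s$ and each $|B_v| \leq s$.

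Next I would argue that each of the subgraphs $H[T]$ and $H[B_v]$ is $r$-colourable. The hypothesis gives this directly when the vertex set has size exactly $s$, and since $H$ has at least $s$ vertices, any vertex subset of $H$ of size less than $s$ can be extended to a set of size $s$ whose induced subgraph is $r$-colourable; as $r$-colourability is inherited by induced subgraphs, the smaller subgraph is $r$-colourable too. Thus we obtain a proper $r$-colouring $c_1 : T \to \{1,\dots,r\}$ of $H[T]$, and for each $v \in T$ a proper $r$-colouring $c_2^{(v)} : B_v \to \{1,\dots,r\}$ of $H[B_v]$.

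Finally, define a colouring of $(H \cdot H)[V_0]$ by
$$
c(v,x) \;:=\; \bigl( c_1(v),\ c_2^{(v)}(x) \bigr) \in \{1,\dots,r\} \times \{1,\dots,r\},
$$
using at most $r^2$ colours. To verify that $c$ is proper, consider any edge between $(v,x)$ and $(w,y)$ in $(H \cdot H)[V_0]$. If $v \neq w$, then by the definition of the lexicographic product $vw \in E(H)$, so $c_1(v) \neq c_1(w)$ and the colours differ in the first coordinate. If $v = w$, then $xy \in E(H)$, so $c_2^{(v)}(x) \neq c_2^{(v)}(y)$ and the colours differ in the second coordinate. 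This shows that the chosen subgraph is $r^2$-colourable.

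There is no substantial obstacle to this argument; the only minor subtlety is ensuring that the hypothesis, stated for $s$-vertex subgraphs, also applies to the possibly smaller pieces $H[T]$ and $H[B_v]$, which is handled by extending them to size $s$ inside $H$ and using monotonicity of colourability under taking induced subgraphs.
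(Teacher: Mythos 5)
Your proof is correct and follows essentially the same approach as the paper: the paper observes that any $s$-vertex induced subgraph of $H\cdot H$ sits inside $H[X]\cdot H[Y]$ for some $X,Y$ of size $s$ and then cites $\chi(G\cdot H)\le\chi(G)\chi(H)$, while you unfold that inequality explicitly by constructing the product colouring $(c_1(v),c_2^{(v)}(x))$. The only cosmetic difference is that you use a separate colouring $c_2^{(v)}$ of each fibre $B_v$ rather than a single colouring of the union of second coordinates, which is a harmless refinement.
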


\begin{proof}
    Any subgraph of $H\cdot H$ on $s$ vertices is the subgraph of $H[X]\cdot H[Y]$ for some $X,Y\subset V(H)$ of size $s$. But $\chi(H[X]\cdot H[Y])\leq \chi(H[X])\cdot \chi(H[Y])\leq r^2$.
\end{proof}
 
	We also need the following well-known properties of random graphs, which can be easily proved using the union bound.
	\begin{lemma}\label{lem:random graph properties}
		Let $s$ and $r$ be fixed positive integers. Let $p=n^{-2/r}/\log n$. Then $G \sim G(n,p)$ satisfies the following properties.
		\begin{enumerate}
			\item Almost surely $\alpha(G) \leq n^{2/r}(\log n)^3$. 
			\item Almost surely every subgraph of $G$ on at most $s$ vertices has a vertex of degree at most $r-1$. Hence, every such subgraph is $r$-colorable. 
		\end{enumerate}
	\end{lemma}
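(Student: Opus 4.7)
The plan is to establish both parts by routine first-moment calculations on $G \sim G(n,p)$ with $p = n^{-2/r}/\log n$, followed by a union bound.

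For part 1, I would set $k = \lceil n^{2/r}(\log n)^3 \rceil$ and bound the expected number of independent sets of size $k$ by
$$\binom{n}{k}(1-p)^{\binom{k}{2}} \leq \exp\!\left(k \log n - \tfrac{1}{2}pk(k-1)\right).$$
The key numerical fact is that for this choice of parameters, $pk = (\log n)^2$, so the negative term in the exponent dominates the positive one by a factor of order $\log n$, and the expectation tends to $0$. Markov's inequality then yields $\alpha(G) < k$ almost surely.

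For part 2, I would fix $s' \leq s$ and note that any graph on $s'$ vertices of minimum degree at least $r$ has at least $\lceil rs'/2 \rceil$ edges. Hence the expected number of $s'$-subsets of $V(G)$ spanning such a configuration is at most
$$\binom{n}{s'}\binom{\binom{s'}{2}}{\lceil rs'/2\rceil} p^{\lceil rs'/2 \rceil} = O\!\left(n^{s'} p^{rs'/2}\right) = O\!\left((\log n)^{-rs'/2}\right) = o(1),$$
using that $s'$ is bounded by the constant $s$. A union bound over the finitely many values of $s'$ shows that almost surely every subgraph of $G$ on at most $s$ vertices contains a vertex of degree at most $r-1$ within it. The $r$-colorability then follows from the standard greedy algorithm on $(r-1)$-degenerate graphs: iteratively remove a vertex of small internal degree, color the remainder by induction, and extend by avoiding the $\leq r-1$ colors used on that vertex's neighbors.

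There is no real obstacle; both parts are textbook applications of the probabilistic method. The only point worth double-checking is that the $(\log n)^3$ buffer in part 1 is comfortably sufficient to overcome the $\binom{n}{k}$ and $(1-p)^{\binom{k}{2}}$ terms, but this computation goes through with substantial slack.
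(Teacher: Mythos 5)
Your proof is correct and is exactly the routine first-moment/union-bound argument the paper has in mind when it remarks that the lemma ``can be easily proved using the union bound'' (the paper does not spell out a proof). Both parts check out: for part 1, $pk \approx (\log n)^2$ makes the exponent $k\log n - \tfrac12 pk(k-1)$ strongly negative, and for part 2 the count $O(n^{s'}p^{\lceil rs'/2\rceil}) = O((\log n)^{-rs'/2})$ vanishes for each of the boundedly many $s' \le s$, with the reduction from arbitrary subgraphs to induced ones and the degeneracy-to-colorability step both standard and valid.
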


The following lemma establishes a recursive inequality for the numbers $\rho_r$, which we will then use to prove Theorem \ref{thm:Ktt vs Kr construction}.
\begin{lemma}\label{lem:rho recursion}
	For every $1\leq i\leq k/2$, we have
	$$\rho_{2^k}\leq \frac{1}{2}\rho_{2^i}+2^{i-\lfloor k/2 \rfloor}.$$
\end{lemma}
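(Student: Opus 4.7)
The plan is to realize the construction sketched before the lemma: combine a graph witnessing a value of $\rho$ close to $\rho_{2^i}$ with a sparse random graph (to shrink $\alpha$), and then take the lexicographic square. Fix $\rho > \rho_{2^i}$, integer parameters $t,s \geq 1$, and $\varepsilon > 0$. By definition of $\rho_{2^i}$, for all sufficiently large $N$ there exists a graph $G_0$ on $N$ vertices with $\alpha_{K_{t,t}}(G_0) \leq N^{\rho}$ such that every $s$-vertex subgraph of $G_0$ is $(2^i-1)$-colourable. Set $j := \lfloor k/2 \rfloor$; since $i$ is an integer with $i \leq k/2$, we have $i \leq j$, so $r := 2^{j-i}$ is a positive integer. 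Sample $R \sim G(N,p)$ with $p = N^{-2/r}/\log N$; Lemma~\ref{lem:random graph properties} then guarantees that with high probability $\alpha(R) \leq N^{2/r}(\log N)^3 = N^{2^{i-j+1}+o(1)}$ and that every $s$-vertex subgraph of $R$ is $r$-colourable.

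Put $H := G_0 \cup R$ on the common vertex set. Adding edges cannot increase $\alpha_{K_{t,t}}$, so $\alpha_{K_{t,t}}(H) \leq N^{\rho}$; and trivially $\alpha(H) \leq \alpha(R) \leq N^{2^{i-j+1}+o(1)}$. Since chromatic number is submultiplicative under graph union (colour each vertex by the ordered pair of its colours in the two factors), every $s$-vertex subgraph of $H$ is $(2^i-1)r$-colourable, and the arithmetic identity $(2^i-1)\cdot 2^{j-i} = 2^j - 2^{j-i} \leq 2^j - 1$ (using $i \geq 1$) shows this is at most $2^j - 1$. Forming the lexicographic square $H \cdot H$ on $n := N^2$ vertices, Lemma~\ref{lem:product chromatic} implies that every $s$-vertex subgraph of $H \cdot H$ has chromatic number at most $(2^j-1)^2 \leq 2^k - 1$. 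Meanwhile, by the ``in particular'' part of Lemma~\ref{lem:substitution 2 graphs},
\[
\alpha_{K_{t,t}}(H \cdot H) \leq t \cdot \alpha(H) \cdot \alpha_{K_{t,t}}(H) \leq t \cdot N^{\rho + 2^{i-j+1} + o(1)} = n^{\rho/2 + 2^{i-\lfloor k/2 \rfloor} + o(1)}.
\]

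To finish, handle an arbitrary $n$ (not of the form $N^2$) by taking $N = \lceil \sqrt{n}\rceil$ and passing to any $n$-vertex induced subgraph of the $N^2$-vertex construction; both the chromatic-number condition and the bound on $\alpha_{K_{t,t}}$ are inherited. This shows $\rho/2 + 2^{i-\lfloor k/2\rfloor} + \varepsilon \in S_{2^k}$ for every $\varepsilon > 0$, so $\rho_{2^k} \leq \rho/2 + 2^{i-\lfloor k/2\rfloor}$. Letting $\rho \downarrow \rho_{2^i}$ gives the lemma.

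I expect the main subtlety to be the calibration of $r$. Too large, and the chromatic number of some $s$-vertex subgraph of $H \cdot H$ exceeds $2^k - 1$, destroying the $K_{2^k}$-avoidance that $S_{2^k}$ requires; too small, and the contribution $N^{2/r}$ to $\alpha(H)$ inflates the additive error in the exponent beyond $2^{i-\lfloor k/2\rfloor}$. The choice $r = 2^{j-i}$ is the unique integer at which the arithmetic $(2^i-1)\cdot 2^{j-i} \leq 2^j - 1$ (hence $((2^i-1)r)^2 \leq 2^k - 1$) just barely holds, while simultaneously yielding $2/r = 2^{i-j+1}$, which becomes exactly $2^{i-\lfloor k/2\rfloor}$ after being halved by the lex-product squaring.
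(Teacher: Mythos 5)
Your proof is correct and follows essentially the same construction as the paper: take a $G_0$ witnessing $\rho_{2^i}$, union it with a sparse random graph $G(N,p)$ with $p = N^{-2/r}/\log N$ for $r = 2^{\lfloor k/2\rfloor - i}$, and then take the lexicographic square, appealing to Lemmas~\ref{lem:substitution 2 graphs}, \ref{lem:product chromatic}, and \ref{lem:random graph properties}. Your explicit treatment of non-perfect-square $n$ via passing to an induced subgraph is slightly more careful than the paper's, which only asserts the construction for perfect-square $N$ before concluding; otherwise the two arguments coincide.
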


\begin{proof}
    Let $\eps>0$. It suffices to prove that $\frac{1}{2}\rho_{2^i}+2^{i-\lfloor k/2 \rfloor}+\eps\in S_{2^k}$.

    Let $s$ and $t$ be positive integers.
    By the definition of $\rho_{2^i}$, there exists some $n_0=n_0(\eps,i,t,s)$ such that for all $n\geq n_0$ there is an $n$-vertex graph $G_0$ with the property that $\alpha_{K_{t,t}}(G_0) \leq n^{\rho_{2^i}+\eps}$ and every subgraph of $G_0$ on $s$ vertices has chromatic number at most $2^i-1$. By Lemma \ref{lem:random graph properties} (applied with $r=2^{\lfloor k/2\rfloor-i}$), there exists some $n_1=n_1(i,k,s)$ such that if $n\geq n_1$, then there is a graph $G_1$ on $n$ vertices such that $\alpha(G_1) \leq n^{2^{i+1-\lfloor k/2\rfloor}}(\log n)^3$ and every subgraph of $G_1$ on $s$ vertices has chromatic number at most $2^{\lfloor k/2 \rfloor-i}$. For $n\geq \max(n_0,n_1)$, we can find $G_0$ and $G_1$ on the same vertex set and let $H$ be the union of $G_0$ and $G_1$. Then $\alpha(H) \leq \alpha(G_1)\leq n^{2^{i+1-\lfloor k/2\rfloor}}(\log n)^3$, $\alpha_{K_{t,t}}(H)\leq \alpha_{K_{t,t}}(G_0) \leq n^{\rho_{2^i}+\eps}$, and every subgraph of $H$ on $s$ vertices has chromatic number at most $(2^i-1)\cdot 2^{\lfloor k/2\rfloor-i} \leq 2^{\lfloor k/2\rfloor} - 1$. Set $G := H \cdot H$. Then, by Lemma \ref{lem:product chromatic}, every subgraph of $G$ on $s$ vertices has chromatic number at most $(2^{\lfloor k/2\rfloor} - 1)^2 \leq 2^k-1$. Also, $N := |V(G)| = n^{2}$, and by Lemma \ref{lem:substitution 2 graphs} we have
	$$
	\alpha_{K_{t,t}}(G) \leq t \cdot \alpha(H) \cdot \alpha_{K_{t,t}}(H) \leq t n^{2^{i+1-\lfloor k/2\rfloor} + \rho_{2^i}+\eps}(\log n)^3 \leq t N^{2^{i-\lfloor k/2\rfloor}+\frac{1}{2}\rho_{2^i}+\eps/2}(\log N)^3.
	$$ 
 Thus, there exists some $N_0=N_0(\eps,k,t,s)$ such that for all perfect squares $N\geq N_0$ there is an $N$-vertex graph $G$ with the property that $\alpha_{K_{t,t}}(G)\leq N^{2^{i-\lfloor k/2\rfloor} + \frac{1}{2}\rho_{2^i}+3\eps/4}$ and every subgraph of $G$ on $s$ vertices has chromatic number at most $2^k-1$.
 
 This clearly implies that $2^{i-\lfloor k/2\rfloor} + \frac{1}{2}\rho_{2^i}+\eps\in S_{2^k}$, as desired.
\end{proof}

\begin{proof}[Proof of Theorem \ref{thm:Ktt vs Kr construction}]
It suffices to show that $\rho_{2^k} \leq \frac{C}{k}$, where $C$ is a large enough constant to be chosen later. 
Indeed, by the definition of $\rho_{2^k}$, if $\rho=2\rho_{2^k}$, then $\rho\in S_{2^k}$, so for every large enough~$n$ there exists an $n$-vertex graph $G$ with $\alpha_{K_{t,t}}(G) \leq n^{\rho}$, and such that every subgraph of $G$ on $2^k$ vertices is $(2^k-1)$-colorable, which implies that $G$ is $K_{2^k}$-free. 

To facilitate induction, we will actually prove that (say) 
\begin{equation*}\label{eq:rho induction}
\rho_{2^k} \leq \frac{C}{k}\left( 1-k^{-1/3} \right)
\end{equation*}
for all $k \geq 2$.
If $k \leq C/5$ then trivially $\rho_{2^k} \leq 1 \leq 
\frac{C/5}{k} \leq \frac{C}{k}(1-k^{-1/3})$. Suppose now that $k > C/5$. Taking $i=\lfloor k/2-\sqrt{k} \rfloor$, we have by Lemma \ref{lem:rho recursion} that
\begin{align}\label{eq:rho}
\nonumber \rho_{2^k}\leq \frac{1}{2}\rho_{2^i}+2^{i-\lfloor k/2 \rfloor}
&\leq \frac{1}{2} \cdot \frac{C}{i}\left(1 - i^{-1/3}\right) + 2^{i-\lfloor k/2 \rfloor} \\ \nonumber &\leq 
\frac{C}{k - 2\sqrt{k} - 2} \left(1 - (k/2)^{-1/3}\right) + 2^{1-\sqrt{k}} \\ \nonumber &\leq 
C\left( \frac{1}{k} + \frac{3}{k^{3/2}} \right) \left(1-(k/2)^{-1/3}\right) + 2^{1-\sqrt{k}}
\\ &\leq 
\frac{C}{k}\left(1 - (k/2)^{-1/3}\right) + \frac{3C}{k^{3/2}} + 2^{1-\sqrt{k}}
,
\end{align}
where the second inequality holds by the induction hypothesis, the third inequality follows from $k/2 -\sqrt{k} - 1 \leq i \leq k/2 - \sqrt{k}$, and the fourth inequality holds for large enough $k$. So it suffices to show that the right-hand side of \eqref{eq:rho} is at most $\frac{C}{k}(1 - k^{-1/3})$. This is equivalent to having 
\begin{equation}
\frac{3C}{k^{3/2}} + 2^{1-\sqrt{k}} \leq 
\frac{C}{k} \left( (k/2)^{-1/3} - k^{-1/3} \right). \label{eqn:simplified}
\end{equation}
As $(1/2)^{-1/3} - 1 \geq 1/4$, the right-hand side of \eqref{eqn:simplified} is at least $\frac{C}{4k^{4/3}}$, so the required inequality holds for $k \geq C/5$, provided that $C$ is large enough. 
\end{proof}

\subsection{The proof of Theorem \ref{thm:cliquefree vs clique}}

In this subsection we prove Theorem \ref{thm:cliquefree vs clique} in the following more precise form.

\begin{theorem} \label{thm:cliquefree vs clique precise}
    For every $r\geq 4$ and every $K_{r-1}$-free graph $F$ on $s\geq 2$ vertices, we have $f_{F,K_r}(n)=O(n^{1/2-\frac{1}{8s-10}}(\log n)^3)$.
\end{theorem}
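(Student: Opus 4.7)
The plan is to adapt the construction used by Janzer and Sudakov to establish $f_{K_{r-2},K_r}(n) \leq n^{1/2 - 1/(8r-26) + o(1)}$. Note that the target exponent $1/2 - 1/(8s-10)$ specializes to theirs when $F = K_{r-2}$ (so $s = r-2$, as $8(r-2)-10 = 8r-26$). Their construction produces a $K_r$-free graph $G$ on $n$ vertices with pseudorandom properties: common neighborhoods of small ``generic'' vertex-subsets have well-controlled sizes. I will take $G$ to be the same graph (possibly retuning the sampling parameters for the parameter $s$) and show that every $A \subseteq V(G)$ with $|A| \geq m := C \cdot n^{1/2 - 1/(8s-10)} (\log n)^3$ contains a copy of $F$ in $G[A]$.

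The embedding strategy is to order $V(F) = \{v_1, \ldots, v_s\}$ and assign images $u_1, \ldots, u_s \in A$ greedily, maintaining that the partial assignment is an \emph{induced} embedding of $F[\{v_1,\ldots,v_i\}]$ (preserving both edges and non-edges). The crucial structural input is that since $F$ is $K_{r-1}$-free, the neighborhood in $F$ of every vertex has clique number at most $r-3$. Thus the back-neighborhood $T_i := \{u_j : j<i,\ v_j v_i \in E(F)\}$ has clique number at most $r-3$ in $G$ by the induced-embedding invariant. Since $G$ is $K_r$-free, the common neighborhood of such $T_i$ in $G$ is then $K_3$-free; by the pseudorandomness of $G$, its intersection with $A$ still has polynomial size, so there are plenty of candidates for $u_i$.

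The quantitative exponent $1/2 - 1/(8s-10)$ is obtained by tracking the loss at each of the $s$ embedding steps: each step reduces the candidate set by roughly a factor of $n^{1/(4s-5)}$, and balancing the cumulative loss against the initial size $|A|$ yields the claimed bound. The $(\log n)^3$ factor comes from the random sampling used in the Janzer-Sudakov construction itself.

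The hard part will be handling simultaneously the positive edge constraints (ensuring that the common neighborhood of $T_i$ within $A$ is large enough) and the negative non-edge constraints (ensuring $u_i$ is not adjacent to any $u_j$ with $v_i v_j \notin E(F)$) across all $s$ steps of the induced embedding. I would handle this with a probabilistic selection at each step: pick a uniformly random vertex from the common neighborhood of $T_i$ inside $A$, and apply a union bound over the at most $s-1$ non-edge constraints, using pseudorandomness to bound $|N_G(u_j) \cap A|$ for each previously placed $u_j$. A further round of random deletion may be needed to handle ``atypical'' partial embeddings where the pseudorandom estimates fail; the cumulative union bounds over $s$ steps and these exceptional sets just fit within the $(\log n)^3$ budget, which is what produces the precise bound in Theorem \ref{thm:cliquefree vs clique precise}.
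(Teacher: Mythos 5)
Your proposal diverges substantially from the paper's argument, and it contains a concrete logical error that would sink the greedy embedding even if everything else were fixed.

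\textbf{The logical error.} You argue that because $F$ is $K_{r-1}$-free, every back-neighbourhood $T_i$ has clique number at most $r-3$, and ``since $G$ is $K_r$-free, the common neighbourhood of such $T_i$ in $G$ is then $K_3$-free.'' That last inference fails. Knowing $\omega(G[T_i]) \leq r-3$ does not mean $T_i$ behaves like a clique of size $r-3$ for the purposes of bounding common neighbourhoods. Take $r=5$ and $T_i = \{u_1,u_2\}$ with $u_1u_2 \notin E(G)$ (this is exactly what will happen whenever $v_i$ has two non-adjacent $F$-neighbours among $v_1,\dots,v_{i-1}$). The common neighbourhood of $\{u_1,u_2\}$ in a $K_5$-free graph can easily contain a triangle: adding a triangle to the joint neighbourhood of two non-adjacent vertices produces only $K_4$'s, never a $K_5$. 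So the candidate set for $u_i$ is not $K_3$-free, your estimate on its size collapses, and the greedy iteration has no guaranteed progress.

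\textbf{The deeper mismatch.} The paper does \emph{not} take the Janzer--Sudakov graph and hunt for induced copies of $F$ by an embedding argument. It changes the construction itself: for each $y\in Y$ in the algebraic bipartite graph $K$, the paper randomly partitions $N_K(y)$ into $s$ parts $A_1(y),\dots,A_s(y)$ and places on $N_K(y)$ a blow-up of $F$ with parts $A_i(y)$ (rather than a complete $(r-2)$-partite graph). Copies of $F$ are then created by construction rather than found by embedding, and the $K_r$-freeness of the resulting graph $H$ is verified via the clique-partition lemma of Janzer--Sudakov together with the O'Nan-configuration-freeness of $K$ and the fact that $F$ being $K_{r-1}$-free ensures each blow-up contributes cliques of size at most $r-2$. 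Your plan keeps the host graph essentially unchanged and instead tries to extract $F$ greedily, but the Janzer--Sudakov host graph is tuned to contain $K_{r-2}$'s, not $F$'s; there is no mechanism in your proposal that guarantees even one copy of $F$ in the common neighbourhoods, let alone enough to survive the $s-1$ non-edge constraints of an induced embedding. The paper also does not prove that \emph{every} large set in $H$ contains $F$: it instead bounds the \emph{number} of $F$-free sets of size roughly $q^{2-1/(s-1)}(\log q)^3$ via the hypergraph container method, then passes to a random induced subgraph and deletes one vertex per surviving $F$-free set. That counting-plus-deletion scheme is what makes the argument go through without the uniform pseudorandomness your greedy step would require.
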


\begin{remark}
    In fact, our construction is such that every set of size roughly $n^{1/2-\frac{1}{8s-10}}(\log n)^3$ contains an \emph{induced} copy of $F$.
\end{remark}

The proof of Theorem \ref{thm:cliquefree vs clique precise} uses the method from \cite{Janzer_Sudakov} (which in turn built on \cite{Mattheus_Verstraete}), where this result was proved in the special case $F=K_s$, $r=s+2$. Similarly to those papers, the following graph provides the starting point in our construction.

\begin{proposition}[\cite{ONan72} or \cite{Mattheus_Verstraete}] \label{prop:algebraic graph}
    For every prime $q$, there is a bipartite graph $K$ with vertex sets $X$ and $Y$ such that the following hold.
    \begin{enumerate}
        \item $|X|=q^4-q^3+q^2$ and $|Y|=q^3+1$.
        \item $d_K(x)=q+1$ for every $x\in X$ and $d_K(y)=q^2$ for every $y\in Y$.
        \item $K$ is $C_4$-free. \label{prop:C4-free}
        \item $K$ does not contain the subdivision of $K_4$ as a subgraph with the part of size $4$ embedded to $X$. \label{prop:subdivisionfree}
    \end{enumerate}
\end{proposition}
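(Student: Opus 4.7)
The plan is to realise $K$ as the incidence graph between the point set $U$ and the secant-line set of a Hermitian unital in the projective plane $\mathrm{PG}(2,q^2)$. Recall that $U$ consists of the $q^3+1$ absolute points of a non-degenerate unitary polarity of $\mathrm{PG}(2,q^2)$, that every line of $\mathrm{PG}(2,q^2)$ meets $U$ either in a single point (a tangent) or in $q+1$ points (a secant), and that through each point of $U$ there passes exactly one tangent line.

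Set $Y := U$ and let $X$ be the set of secants to $U$, with edges of $K$ given by incidence. Property~(1) follows by counting: $\mathrm{PG}(2,q^2)$ contains $q^4+q^2+1$ lines, of which $q^3+1$ are tangent, so $|X| = q^4-q^3+q^2$. Property~(2) is immediate: each secant meets $U$ in $q+1$ points, and each point of $U$ lies on $q^2+1$ lines of $\mathrm{PG}(2,q^2)$, exactly one of which is the tangent at that point, giving $d_K(x)=q+1$ and $d_K(y)=q^2$. Property~(3) holds because any two distinct lines of $\mathrm{PG}(2,q^2)$ meet in at most one point, so any two vertices of $X$ share at most one neighbour in $Y$.

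The nontrivial part is Property~(4). The key input is the classical theorem of O'Nan, asserting that a Hermitian unital admits no O'Nan configuration, i.e.\ no four secants whose six pairwise intersection points with $U$ are six distinct points. This immediately rules out a copy of $K_4$ with branch set embedded in $X$. To upgrade this to an arbitrary $K_4$-subdivision, I would perform local shortcutting: a path in $K$ between two $X$-vertices alternates secants and points of $U$, and since any two points of $U$ determine a unique line of $\mathrm{PG}(2,q^2)$, any sub-path of length $\geq 4$ can, in principle, be replaced by the single secant through its two $U$-endpoints. Applied to a minimal putative $K_4$-subdivision, all six branches collapse to length-two paths (common $Y$-neighbours), producing a genuine O'Nan configuration and contradicting O'Nan's theorem.

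The main obstacle is maintaining internal vertex-disjointness of the six branches while shortcutting, since different branches may attempt to reuse the same intermediate secant or $U$-point, and one has to argue that in a minimal counterexample such collisions cannot occur. Since this exact incidence graph (and essentially the same subdivision property) is used and verified in \cite{Mattheus_Verstraete} and \cite{Janzer_Sudakov}, the cleanest execution is to quote their verification of Property~(4) rather than to re-derive the full combinatorial reduction.
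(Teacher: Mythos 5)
Your identification of $K$ as the incidence graph between the $q^3+1$ points of a Hermitian unital in $\mathrm{PG}(2,q^2)$ (taken as $Y$) and its $q^4-q^3+q^2$ secant lines (taken as $X$) is exactly the construction the paper cites from O'Nan and Mattheus--Verstra\"ete, and your verification of properties (1)--(3) is correct.

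The difficulty you raise for property (4) is, however, imagined: you have misread the statement. The phrase ``the subdivision of $K_4$\ldots with the part of size $4$ embedded to $X$'' refers to the \emph{one-subdivision} $K_4^{(1)}$, i.e.\ the fixed $10$-vertex bipartite graph with parts of sizes $4$ (the branch vertices) and $6$ (one subdivision vertex per edge of $K_4$); an arbitrary subdivision would not have a ``part of size $4$''. This is also how the property is used in the paper: after Lemma \ref{lem:clique partition} one finds four vertices of $R\subseteq X$ and six \emph{distinct} common neighbours in $Y$, which is precisely a copy of $K_4^{(1)}$ with branch set in $X$. A copy of $K_4^{(1)}$ with branch vertices among the secants and subdivision vertices among the unital points is, by definition, an O'Nan configuration, so O'Nan's theorem gives property (4) directly, with no ``shortcutting'' argument needed. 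Your shortcutting plan would in any case be circular as stated (replacing a long branch by the secant through its endpoints does not preserve the subgraph-in-$K$ requirement unless that secant is itself a vertex of the configuration, and it may coincide with another branch), but none of this is required once the statement is read correctly.

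In summary: same construction and same key input (O'Nan's theorem) as the cited sources, so the approach matches; the only flaw is the spurious complication around subdivisions, which should simply be deleted.
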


Throughout this subsection, let $r\geq 4$ be a fixed positive integer and let $F$ be a fixed $K_{r-1}$-free graph on $s$ vertices. Let us identify the vertex set of $F$ with $[s]$. Let $q$ be a prime and let $K$ be the graph provided by Proposition~\ref{prop:algebraic graph}. We now construct a $K_r$-free graph $H$ on vertex set $X$ randomly as follows. 
For each $y\in Y$, partition $N_K(y)$ uniformly randomly as $A_1(y)\cup A_2(y)\cup \dots \cup A_s(y)$ and place a complete bipartite graph between $A_i(y)$ and $A_j(y)$ whenever $i$ and $j$ are adjacent in $F$. In other words, we place a blow-up of $F$ in $N_K(y)$ with parts $A_1(y),\dots,A_s(y)$. The following lemma, proved in \cite{Janzer_Sudakov}, combined with properties \ref{prop:C4-free} and \ref{prop:subdivisionfree} of Proposition \ref{prop:algebraic graph}, shows that $H$ is $K_r$-free with probability 1.

\begin{lemma}[{\cite[Lemma 2.2]{Janzer_Sudakov}}] \label{lem:clique partition}
    Assume that the edge set of a $K_r$ is partitioned into cliques $C_1,\dots,C_k$ of size at most $r-2$. Then there exist four vertices such that all six edges between them belong to different cliques $C_i$.
\end{lemma}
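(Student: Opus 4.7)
The plan is to exhibit the four vertices explicitly, choosing two of them inside a large clique of the partition and the two others outside it. Before starting, I would record one simple structural fact: any two cliques $C_i,C_j$ in the partition intersect in at most one vertex, for otherwise the edge joining two common vertices would lie in both $C_i$ and $C_j$, contradicting the fact that the $C_\ell$'s partition the edges of $K_r$.

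I would then split into two cases according to the maximum size of a clique in the partition. If every $C_i$ has size $2$, every edge is its own colour class and any four vertices already form a rainbow $K_4$; this is the only case that can occur when $r=4$, since then $|C_i|\le r-2=2$. The substantive case is that some clique $C$ has $|C|\ge 3$ (in particular $r\ge 5$). Because $|C|\le r-2$, one can choose two distinct vertices $x,y\notin C$; let $C'$ be the unique clique containing the edge $xy$. Since $x,y\notin C$ we have $C'\neq C$, and by the structural fact above $|C\cap C'|\le 1$, so $|C\setminus C'|\ge |C|-1\ge 2$ and we may pick two distinct vertices $u,v\in C\setminus C'$. The candidate rainbow $K_4$ is $\{u,v,x,y\}$.

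To verify the claim I would go through the six edges and check each possible coincidence of their clique labels. By construction $uv\in C$, $xy\in C'$, and none of the four ``crossing'' edges $ux,uy,vx,vy$ can lie in $C$ (since $x,y\notin C$) or in $C'$ (since $u,v\notin C'$), so the labels of the crossing edges are all different from both $C$ and $C'$. For the remaining coincidences I would use the one-intersection property of the partition: if the crossing edges $ux$ and $uy$ had the same label, then $\{u,x,y\}$ would lie in one clique, which must be $C'$ (forced by the edge $xy$), contradicting $u\notin C'$; a coincidence $ux=vx$ would force $\{u,v,x\}$ into $C$, contradicting $x\notin C$; and a ``diagonal'' coincidence $ux=vy$ would force $\{u,v,x,y\}$ into a single clique, which would have to equal both $C$ (because of $uv$) and $C'$ (because of $xy$), impossible. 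The symmetric cases $uy=vy$, $uy=vx$ are handled identically.

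The only delicate point in this plan is guaranteeing enough room to pick $u,v$ inside $C\setminus C'$; this is precisely what forces the hypothesis $|C|\ge 3$ and necessitates the separate (but trivial) treatment of the all-edges case. Everything else is a mechanical case check powered by the single structural lemma that two cliques in the partition share at most one vertex.
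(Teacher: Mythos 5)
Your proof is correct. The paper itself does not prove this lemma but simply cites it from Janzer--Sudakov (their Lemma 2.2), so there is no in-paper argument to compare your approach against; judged on its own, your argument is complete. The split into the degenerate case (all cliques of size~$2$, the only possibility when $r=4$) and the main case (some clique $C$ of size at least~$3$) is clean, the structural observation that two cliques of an edge partition meet in at most one vertex is exactly the tool needed, and your verification covers all $\binom{6}{2}$ pairs of edges of $\{u,v,x,y\}$: the two ``anchor'' edges $uv$, $xy$ lie in distinct cliques $C\neq C'$; the four crossing edges avoid both $C$ and $C'$ by the choice of $u,v\notin C'$ and $x,y\notin C$; and any coincidence among the crossing edges (adjacent or diagonal) would force three or four of the vertices into a single clique, which would then be forced to equal $C$ or $C'$ and is ruled out. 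The counting that makes the construction run ($|C|\le r-2$ to find $x,y$ outside $C$; $|C|\ge 3$ together with $|C\cap C'|\le 1$ to find $u,v\in C\setminus C'$) is also correct.
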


To see that Lemma \ref{lem:clique partition} implies that $H$ is $K_r$-free, assume that $H$ does contain a copy of $K_r$ on vertex set $R$. Note that by property \ref{prop:C4-free} of Proposition \ref{prop:algebraic graph}, for any edge $uv$ in the complete graph $H[R]$, there is a unique $y\in Y$ such that $u,v\in N_K(y)$. Hence, we can partition the edge set of $H[R]$ into cliques, one with vertex set $N_K(y)\cap R$ for each $y\in Y$ such that $|N_K(y)\cap R|\geq 2$. Moreover, any such clique has size at most $r-2$. (Indeed, $F$ is $K_{r-1}$-free, so if $|N_K(y)\cap R|\geq r-1$, then $N_K(y)\cap R$ must contain distinct vertices $u\in A_i(y)$ and $v\in A_j(y)$ such that $ij\not \in E(F)$ (or $i=j$), meaning that $uv$ is not an edge in $H$.) Hence, by Lemma \ref{lem:clique partition}, there are four vertices in $R$ such that for any two of them there is a different common neighbour in $Y$ in the graph $K$, contradicting property \ref{prop:subdivisionfree} of Proposition~\ref{prop:algebraic graph}.

Our key lemma, proved in Section \ref{sec:Ffree sets}, is as follows. Here and below we ignore floor and ceiling signs whenever they are not crucial.

\begin{lemma} \label{lem:few F-free}
    Let $q$ be a sufficiently large prime and let $t=q^{2-\frac{1}{s-1}}(\log q)^{3}$. Then with positive probability the number of sets $T\subset X$ of size $t$ for which $H[T]$ is $F$-free is at most $(q^{\frac{1}{s-1}})^t$.
\end{lemma}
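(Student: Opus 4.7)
I will bound the expected number $\mathbb{E}[N_F]$ of $F$-free $t$-sets in $H$ by $\tfrac{1}{2}(q^{1/(s-1)})^t$ and conclude by Markov's inequality.

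I reformulate the random construction by assigning, for each $v\in X$ and $y\in N_K(v)$, an independent uniform label $\ell_{v,y}\in[s]$, so that $A_i(y)=\{v\in N_K(y):\ell_{v,y}=i\}$. Fix a $t$-set $T\subseteq X$ and for each $y\in Y$ set $X_y:=T\cap N_K(y)$ and $k_y:=|X_y|$. If the labels on $X_y$ cover $[s]$, then picking one vertex from each non-empty $A_i(y)\cap X_y$ gives a copy of $F$ inside $T$ (because $F$ embeds into its blow-up with one vertex per part). So $F$-freeness of $H[T]$ forces, at every $y$, the labels on $X_y$ to miss some element of $[s]$, and by independence of the labels across different $y$,
\[
\mathbb{P}\bigl[H[T]\text{ is }F\text{-free}\bigr] \;\le\; \prod_{y\in Y}p_{k_y}, \qquad p_k \;\le\; \min\!\bigl(1,\,s(1-1/s)^k\bigr).
\]
Two constraints on $(k_y)$ are crucial for every $T$: $\sum_y k_y=t(q+1)$, and by the $C_4$-freeness of $K$ (property \ref{prop:C4-free} of Proposition \ref{prop:algebraic graph}) $\sum_y\binom{k_y}{2}\le\binom{t}{2}$, since any two vertices of $X$ have at most one common neighbour in $Y$.

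I then split the sum $\sum_T\mathbb{P}[H[T]\text{ is }F\text{-free}]$ according to the concentration profile of $T$. For \emph{concentrated} $T$, where enough $y$'s have $k_y$ larger than a threshold $\tau\asymp s\log q$, a Cauchy--Schwarz-type lower bound on $|\{y:k_y\ge\tau\}|$ extracted from the two constraints above pushes $\prod_y p_{k_y}$ below $q^{-2t}(\log q)^{3t}$; multiplied by $\binom{|X|}{t}\le(eq^{2+1/(s-1)}/(\log q)^3)^t$, this contributes at most $\tfrac{1}{4}(q^{1/(s-1)})^t$ to $\mathbb{E}[N_F]$. For \emph{spread-out} $T$, where the probabilistic bound is essentially trivial, I count directly: such a $T$ is an independent set in the $s$-uniform hypergraph $\K_s$ on $X$ whose edges are $\binom{N_K(y)}{s}$ for $y\in Y$. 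This hypergraph has $\approx q^{3+2s}/s!$ edges, and our choice $t=q^{2-1/(s-1)}(\log q)^3$ is well above the threshold $q^{2-3/s}$ at which a uniformly random $t$-set meets many such hyperedges in expectation, so a Janson-type or deletion-method argument gives at most $\tfrac{1}{4}(q^{1/(s-1)})^t$ such $T$'s.

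The main obstacle is calibrating $\tau$ so that both halves close up simultaneously. In the concentrated regime, $\prod_y p_{k_y}$ must be small enough to beat $\binom{|X|}{t}$, which effectively requires $\Omega(q^2/(\log q)^2)$ ``heavy'' $y$'s --- a consequence of the second-moment bound combined with $\sum_y k_y = t(q+1)$ --- while in the spread-out regime the Janson count needs the polylogarithmic factor $(\log q)^3$ in $t$ to produce a mean exceeding $2t\log q$. With both estimates in place, Markov's inequality completes the proof.
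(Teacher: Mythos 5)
Your proposal takes a genuinely different route from the paper: you try to close the lemma with a direct first-moment calculation, averaging over both the random set $T$ and the random construction $H$, whereas the paper first fixes a ``nice'' outcome of $H$ (Lemma \ref{lem:thereisgoodscale}) and then counts $F$-free sets deterministically via the hypergraph container method (Lemmas \ref{lem:BMScontainer}, \ref{lem:bounded degree}, \ref{lem:container}, Corollary \ref{cor:few sets}). Your concentrated/spread-out split is a reasonable first instinct, and the concentrated half is sound in spirit: since $p_{k}\le 1-s!/s^s$ already for $k\ge s$, having $\Omega_s(t\log q)$ vertices $y$ with $k_y\ge s$ does force $\prod_y p_{k_y}\le q^{-2t}$, which beats $\binom{|X|}{t}$ as you need.

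The gap is in the spread-out half, and it is serious. What you must count there is the number of $t$-subsets $T$ with at most $O_s(t\log q)$ ``heavy'' $y$'s, i.e.\ sets that are (approximately) independent in the $s$-uniform hypergraph whose edges are the $s$-subsets of the sets $N_K(y)$. Neither of the two tools you invoke does this job. Janson's inequality applied to a uniformly random $t$-set does not close: the pair-correlation term $\Delta$ is dominated by pairs of hyperedges sharing a single vertex of $X$ but lying in different neighbourhoods $N_K(y)$, $N_K(y')$, and a short computation gives $\Delta/\mu=\Theta_s\big((\log q)^{3s-3}\big)\to\infty$. The extended form $e^{-\mu^2/2\Delta}$ then yields only $e^{-O_s(t)}$, hopelessly short of the required $\binom{|X|}{t}^{-1}\approx q^{-(2+1/(s-1))t}$. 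The deletion method is likewise the wrong tool: it produces a large independent set inside a sparse hypergraph, it does not \emph{bound the number} of independent sets of a prescribed size, which is what is needed. In fact this is precisely the situation that motivates the hypergraph container method: when a naive union bound over all $\binom{|X|}{t}$ choices of $T$ is too generous, containers replace it with a union bound over a much smaller collection $\mathcal{C}$ of ``fingerprints'', each of size only $O_s(q^2)$, and the final $\binom{500 s^2 q^2}{t}$ factor is small enough. Your present write-up has no substitute for this step, and without it the spread-out contribution to $\mathbb{E}[N_F]$ is unbounded. Two secondary remarks: your bound $p_k\le \min(1,s(1-1/s)^k)$ is only nontrivial for $k\gtrsim s\log s$, whereas the sharper $p_k\le 1-s!/s^s$ for $k\ge s$ is what makes the threshold $\tau=s$ (rather than $\tau\asymp s\log q$) natural; and the constraint $\sum_y\binom{k_y}{2}\le\binom{t}{2}$ gives $\sum_y k_y^2\le t^2+tq$ and hence (via Cauchy--Schwarz against $\sum_y k_y=t(q+1)$) a lower bound of order $q^2$ on $|\{y:k_y>0\}|$, not directly the number of heavy $y$'s, so your ``Cauchy--Schwarz-type lower bound on $|\{y:k_y\ge\tau\}|$'' needs to be spelled out and in any case addresses only the concentrated half.
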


It is easy to deduce Theorem \ref{thm:cliquefree vs clique precise} from this.

\begin{proof}[Proof of Theorem \ref{thm:cliquefree vs clique precise}]
    Take an outcome of $H$ which satisfies the conclusion of Lemma \ref{lem:few F-free}. Let $\tilde{X}$ be a random subset of $X$ obtained by keeping each vertex independently with probability $q^{-1/(s-1)}$, and let $G_0=H[\tilde{X}]$. Then for each set $T\subset X$, the probability that $T\subset \tilde{X}$ is $(q^{-1/(s-1)})^{|T|}$. Hence, for $t=q^{2-1/(s-1)}(\log q)^{3}$, Lemma \ref{lem:few F-free} implies that the expected number of $F$-free sets of size $t$ in $G_0$ is at most 1. Removing one vertex from each such set, we obtain a $K_r$-free graph $G$ in which every vertex set of size $t$ contains a copy of $F$. The expected number of vertices in $G$ is at least $|X|q^{-1/(s-1)}-1\geq \frac{1}{2}q^{4-1/(s-1)}-1$, so there exists an outcome for $G$ with at least $\frac{1}{2}q^{4-1/(s-1)}-1$ vertices. So, for each sufficiently large prime $q$, there is a $K_r$-free graph with at least $\frac{1}{2}q^{(4s-5)/(s-1)}-1$ vertices in which every vertex set of size $q^{(2s-3)/(s-1)}(\log q)^{3}$ contains a copy of $F$. Using Bertrand's postulate, this implies that $f_{F,K_r}(n)=O(n^{\frac{2s-3}{4s-5}}(\log n)^{3})=O(n^{1/2-\frac{1}{8s-10}}(\log n)^3)$, completing the proof.
\end{proof}

\subsubsection{The number of $F$-free sets} \label{sec:Ffree sets}

In this subsection we prove Lemma \ref{lem:few F-free}. While the proof is very similar to that of Lemma 2.3 in \cite{Janzer_Sudakov}, there are some small necessary changes, and we include a full proof for completeness. We will use the following lemma from \cite{Janzer_Sudakov}.

\begin{lemma}[{\cite[Lemma 2.4]{Janzer_Sudakov}}] \label{lem:thereisgoodscale}
    Assume that $q$ is sufficiently large. Then with positive probability, for every $U\subset X$ with $|U|\geq 500s^2q^2$ there exists some $\gamma\geq |U|/q^2$ such that the number of $y\in Y$ with $\gamma/(10s)\leq|A_i(y)\cap U|\leq \gamma$ for all $i\in [s]$ is at least $|U|q/(8(\log q)\gamma)$.
\end{lemma}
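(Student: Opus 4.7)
My plan is to combine a dyadic pigeonhole on the values $b_y := |N_K(y) \cap U|$ with Chernoff concentration on the random partitions $N_K(y) = A_1(y) \cup \cdots \cup A_s(y)$, and to close by a careful union bound over $U$.

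Fix $U \subseteq X$ with $m := |U| \geq 500 s^2 q^2$ and set $b_y := |N_K(y) \cap U|$. Double counting gives $\sum_{y \in Y} b_y = m(q+1)$ with each $b_y \leq q^2$. The ``light'' $y$'s, meaning those with $b_y < m/(2q^2)$, contribute at most $|Y| \cdot m/(2q^2) \leq mq/2$ to this sum, so the ``heavy'' $y$'s carry at least $mq/2$ of the mass. Dyadic pigeonhole over the at most $2\log_2 q$ relevant ranges $[2^j, 2^{j+1})$ with $2^j \geq m/(2q^2)$ then produces a scale $\gamma \geq m/q^2$ such that $Y_\gamma := \{y : b_y \in [\gamma/2, \gamma]\}$ satisfies $|Y_\gamma| \geq mq/(4\gamma \log_2 q)$.

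Next, for each $y \in Y_\gamma$ the random partition distributes the $b_y$ elements of $N_K(y) \cap U$ uniformly among the $s$ parts, giving $\mathbb{E}|A_i(y) \cap U| = b_y/s \in [\gamma/(2s), \gamma/s]$. The upper bound $|A_i(y) \cap U| \leq b_y \leq \gamma$ is automatic, and Chernoff yields $|A_i(y) \cap U| \geq \gamma/(10s)$ with failure probability at most $\exp(-c\gamma/s)$ per pair $(y,i)$, exploiting that $\gamma/s \geq m/(sq^2) \geq 500s \gg \log s$. Union-bounding over $i \in [s]$ gives a $\gamma$-balance probability of at least $7/8$ per $y$. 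Since partitions for different $y$'s are independent, a second Chernoff bound then gives that at least $|Y_\gamma|/2 \geq mq/(8\gamma \log_2 q)$ many $y$'s in $Y_\gamma$ are $\gamma$-balanced, except with probability $\exp(-c'|Y_\gamma|)$, giving the lemma for the fixed $U$.

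The main obstacle is extending this from a single $U$ to all $U$ simultaneously. A naive union bound over $U$ of size $m_0$ and dyadic $\gamma_0$ needs $\exp(c'|Y_{\gamma_0}|)$ to beat $\binom{|X|}{m_0}$, which fails when $|Y_{\gamma_0}|$ is small (e.g.\ when $\gamma_0 \approx q^2$, in which case $|Y_{\gamma_0}|$ can be of order $m_0/q$ while $\log \binom{|X|}{m_0}$ is of order $m_0 \log q$). To close the argument I would union-bound not over raw sets $U$ but over ``$Y_{\gamma_0}$-profiles'' $(N_K(y) \cap U)_{y \in Y_{\gamma_0}}$: the balance event depends only on this profile, and the $C_4$-freeness of $K$ (property \ref{prop:C4-free} of Proposition \ref{prop:algebraic graph}), which bounds $|N_K(y) \cap N_K(y')| \leq 1$ for distinct $y, y'$, keeps the profile count manageable. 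A final outer dyadic sweep over the $O(\log^2 q)$ pairs $(m_0, \gamma_0)$ completes the proof. The first two steps are standard dyadic-pigeonhole and Chernoff arguments; the profile-based union bound is the technical heart of the argument.
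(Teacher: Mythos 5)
The paper does not prove this lemma itself --- it cites it as Lemma~2.4 of Janzer--Sudakov --- so the comparison has to be against a correct proof rather than against text in this paper. Your skeleton (dyadic pigeonhole on $b_y=|N_K(y)\cap U|$, Chernoff on the random $s$-partition of each $N_K(y)$, then a union bound over $U$) is the right one, but there is a genuine gap in the step where you pass from the per-$y$ estimate to the union bound, and it is this gap that sends you chasing an unnecessary and in fact problematic ``profile'' argument.

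Concretely: you correctly note that for $y\in Y_\gamma$ the per-$(y,i)$ failure probability is $\exp(-c\gamma/s)$, but you then throw this away and only retain ``balance probability at least $7/8$ per $y$.'' That coarse bound gives a failure probability of only $\exp(-c'|Y_\gamma|)$ for the event that at least half of $Y_\gamma$ is balanced, and indeed $\exp(-c'|Y_\gamma|)$ does not beat $\binom{|X|}{m}$ when $\gamma$ is large. But you should keep the exponential gain: since the partitions for different $y$ are independent, and each $y\in Y_\gamma$ fails with probability at most $se^{-c\gamma/s}\leq e^{-c''\gamma/s}$ (valid because $\gamma\geq m/q^2\geq 500s^2$), the probability that at least half of $Y_\gamma$ fails is at most $\binom{|Y_\gamma|}{|Y_\gamma|/2}e^{-c''\gamma|Y_\gamma|/(2s)}\leq e^{-c'''|Y_\gamma|\gamma/s}$. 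Now $|Y_\gamma|\gamma\geq mq/(4\log q)$ from your dyadic step, so the failure probability for a fixed $U$ is at most $\exp(-\Omega(mq/(s\log q)))$, while $\log\binom{|X|}{m}=O(m\log q)$; since $q/(s\log q)\gg\log q$, the plain union bound over all $U$ (and the $O(\log q)$ dyadic scales) closes cleanly. No profile argument is needed.

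Moreover, the profile idea as sketched does not obviously close the gap even with your coarse estimate: $C_4$-freeness bounds $|N_K(y)\cap N_K(y')|$, but it does not prevent the number of profiles $(N_K(y)\cap U)_{y\in Y_\gamma}$ from being as large as roughly $\binom{q^2}{\gamma}^{|Y_\gamma|}$, which is far larger than $e^{c'|Y_\gamma|}$ whenever $\gamma$ is not essentially $q^2$. So the ``technical heart'' you identify is a detour; the heart is simply retaining the $e^{-\Omega(\gamma/s)}$ per-$y$ failure probability in the second Chernoff step.
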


\begin{definition}
    Let us call an instance of $H$ \emph{nice} if it satisfies the conclusion of Lemma \ref{lem:thereisgoodscale}.
\end{definition}

Lemma \ref{lem:few F-free} can now be deduced from the following.

\begin{lemma} \label{lem:few F-free if nice}
    Let $q$ be sufficiently large and let $t=q^{2-1/(s-1)}(\log q)^{3}$. If $H$ is nice, then the number of sets $T\subset X$ of size $t$ for which $H[T]$ is $F$-free is at most $(q^{1/(s-1)})^t$.
\end{lemma}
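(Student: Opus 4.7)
}

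The plan is to mimic the argument from Janzer–Sudakov Lemma 2.3 almost verbatim, once one key reduction is in place. The essential observation is that the $F$-freeness of $T$ in $H$ implies \emph{exactly the same} structural condition as in the clique case: for every $y \in Y$, there exists some index $\sigma_T(y) \in [s]$ with $A_{\sigma_T(y)}(y) \cap T = \emptyset$. Indeed, if all $s$ sets $A_1(y) \cap T, \ldots, A_s(y) \cap T$ were nonempty, then picking one representative $v_i \in A_i(y) \cap T$ for each $i$ would yield $s$ vertices spanning a copy of $F$ in $H[T]$, by the $F$-blow-up structure that was placed on $N_K(y)$. This is precisely the ``some part is missed'' condition used in the $F=K_s$ argument, and once we have it, the rest of the proof is blind to whether $F$ is a clique or a general $K_{r-1}$-free graph on $s$ vertices.

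Given this, I would reproduce the container-style encoding of Janzer–Sudakov. Concretely, for each $F$-free $T \subset X$ of size $t$ I would build a signature $\psi(T)$ lying in a set of size at most $(q^{1/(s-1)})^t$, from which $T$ can be uniquely reconstructed. The algorithm maintains a shrinking container $C_j \supseteq T$ with $C_0 = X$. At each phase $j$, while $|C_{j-1}| \geq 500 s^2 q^2$, I would apply Lemma~\ref{lem:thereisgoodscale} to $C_{j-1}$ to obtain a scale $\gamma_j \geq |C_{j-1}|/q^2$ and a good set $Y_j \subseteq Y$ with $|Y_j|\gamma_j \geq |C_{j-1}|q/(8\log q)$. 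Using the ``some $A_i(y)\cap T=\emptyset$'' condition together with pigeonhole over $[s]$, I would identify a dominant index $i^*_j \in [s]$ and a subset $Y_j^* \subseteq Y_j$ of density $\geq 1/s$ where $A_{i^*_j}(y)\cap T = \emptyset$; writing $(\gamma_j, i^*_j)$ into the signature along with a small amount of information that pins down $Y_j^*$ (via committing to a few vertices of $T$ that witness it), I would remove $\bigcup_{y\in Y_j^*} A_{i^*_j}(y)$ from $C_{j-1}$ to obtain $C_j$. Using $d_K(\cdot)\leq q+1$, the removed set has size $\Omega(|C_{j-1}|/(s^2\log q))$, so the container shrinks geometrically and the process terminates after $O(s^2 (\log q)^2)$ phases. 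The residual $T\cap C_{\text{final}}$, of size at most $500 s^2 q^2 \ll t \log q/(s-1)$, is then encoded trivially.

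The main obstacle, and what requires care (rather than novelty), is the bookkeeping: one must verify that the per-phase signature cost, summed over all phases and combined with the residual, fits into the target $t\log_2(q^{1/(s-1)})$ bits. In particular, the naive encoding ``which of the $|Y_j|$ good indices have $\sigma_T(y)=i^*_j$'' is much too expensive, so the signature must record $Y_j^*$ only implicitly, by committing (at each phase) to a handful of vertices of $T$ whose interaction with $K$ already determines the relevant $y$'s. This is exactly the delicate step that Janzer–Sudakov carry out in their clique case, and since both the structural lemma (Lemma~\ref{lem:thereisgoodscale}) and the combinatorial constraint on $T$ are identical in our setting, their bookkeeping transfers directly and yields the desired bound of $(q^{1/(s-1)})^t$ on the number of $F$-free $t$-sets.
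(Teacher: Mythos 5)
Your high-level plan is the same as the paper's: reduce to Janzer--Sudakov's Lemma 2.3 and observe that the structural constraint on $T$ is unchanged once one replaces cliques by copies of $F$. Two points of comparison are worth making. First, your key observation is correct and captures exactly the right reduction: if $T$ meets every $A_i(y)$, then any choice of representatives $v_i\in A_i(y)\cap T$ yields, since the $A_i(y)$ are disjoint, $s$ distinct vertices spanning a (not necessarily induced) copy of $F$; so $F$-freeness of $T$ forces $T$ to miss some $A_i(y)$ for every $y$. The paper phrases this instead as ``$T$ is an independent set in the $s$-uniform hypergraph $\mathcal{G}$ whose hyperedges are the $s$-tuples $(x_1,\dots,x_s)$ with $x_i\in A_i(y)$ for some scale-balanced $y$,'' which is the same statement restricted to balanced $y$'s.

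Second, the mechanism you sketch in the middle paragraph is not quite how the paper (or Janzer--Sudakov) carries out the ``delicate step.'' You describe a bespoke multi-phase encoding that at each phase identifies a dominant index $i^*_j$, a subset $Y^*_j\subseteq Y_j$ of density $\geq 1/s$ on which $A_{i^*_j}(y)\cap T=\emptyset$, removes $\bigcup_{y\in Y^*_j}A_{i^*_j}(y)$, and records $Y^*_j$ ``implicitly via a few vertices of $T$.'' That last step is the problematic part: $Y^*_j$ is defined by an emptiness condition ($A_{i^*_j}(y)\cap T=\emptyset$), and a handful of vertices \emph{in} $T$ cannot certify emptiness. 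The paper instead invokes the black-box hypergraph container theorem (Lemma~\ref{lem:BMScontainer}) applied to $\mathcal{G}$; the fingerprint is produced by a greedy max-degree rule on $\mathcal{G}$, not by explicitly pinning down which $y$'s are missed, and its validity rests on codegree bounds $\Delta_\ell(\mathcal{G})\leq\gamma^{s-\ell}$ coming from $C_4$-freeness of $K$ and Lemma~\ref{lem:thereisgoodscale} (this is Lemma~\ref{lem:bounded degree} in the paper). Since those codegree bounds depend only on the partition $(A_i(y))_i$ and not on which pairs $ij$ are edges of $F$, they are literally unchanged from the $F=K_s$ case, so your claim that the bookkeeping transfers directly is correct --- but the thing that transfers is the container-theorem verification, not the hand-rolled signature scheme you describe. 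If you were to write it up, you should state and check the codegree bound for $\mathcal{G}$ rather than attempt the explicit ``pin down $Y^*_j$'' encoding, which as stated does not work.
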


In what follows, we will consider an $s$-uniform hypergraph on vertex set $X$ whose hyperedges correspond to the copies of $F$ in $H$. Then $F$-free subsets of $X$ will correspond to independent sets in this hypergraph, so to prove Lemma \ref{lem:few F-free if nice}, it suffices to bound the number of independent sets of certain size. This will be achieved using the hypergraph container method. For an $s$-uniform hypergraph $\mathcal{G}$ and some $\ell \in [s]$, we write $\Delta_{\ell}(\mathcal{G})$ for the maximum number of hyperedges in $\mathcal{G}$ containing the same set of $\ell$ vertices.

We use the following result from \cite{Janzer_Sudakov}.

\begin{lemma}[{\cite[Corollary 2.8]{Janzer_Sudakov}}] \label{lem:BMScontainer}
    For every positive integer $s\geq 2$ and positive reals $p$ and $\lambda$, the following holds. Suppose that $\mathcal{G}$ is an $s$-uniform hypergraph with at least two vertices such that $pv(\mathcal{G})$ and $v(\mathcal{G})/\lambda$ are integers, and for every $\ell\in [s]$,
    $$\Delta_{\ell}(\mathcal{G})\leq \lambda\cdot p^{\ell-1}\frac{e(\mathcal{G})}{v(\mathcal{G})}.$$

    Then there exists a collection $\mathcal{C}$ of at most $v(\mathcal{G})^{spv(\mathcal{G})}$ sets of size at most $(1-\delta \lambda^{-1})v(\mathcal{G})$ such that for every independent set $I$ in $\mathcal{G}$, there exists some $R\in \mathcal{C}$ with $I\subset R$, where $\delta=2^{-s(s+1)}$.
\end{lemma}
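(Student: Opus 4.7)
The plan is to prove the lemma via the hypergraph container method of Balogh--Morris--Samotij and Saxton--Thomason. The strategy is to associate to every independent set $I$ of $\mathcal{G}$ a pair $(S(I),R(I))$, where $S(I)\subseteq I$ is a small ``fingerprint'' of size at most $spv(\mathcal{G})$ and $R(I)\supseteq I$ is a ``container'' of size at most $(1-\delta\lambda^{-1})v(\mathcal{G})$, such that $R(I)$ is determined entirely by $S(I)$. The family $\mathcal{C}=\{R(S):S\subseteq V(\mathcal{G}),\ |S|\leq spv(\mathcal{G})\}$ then has size at most $\sum_{k\leq spv(\mathcal{G})}\binom{v(\mathcal{G})}{k}\leq v(\mathcal{G})^{spv(\mathcal{G})}$, which is the required bound on the number of containers.

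The fingerprint and container are produced by a deterministic algorithm operating on the pair $(\mathcal{G},I)$. It maintains an ``available'' set $A\subseteq V(\mathcal{G})$, initially $V(\mathcal{G})$, and a fingerprint $S$, initially empty. At each step, the algorithm selects the vertex $v\in A$ that maximizes a carefully designed score built out of the $\ell$-degrees of $v$ in $\mathcal{G}[A]$ for $\ell\in[s]$. If $v\in I$, it is added to $S$; in either case the algorithm prunes $A$ by removing $v$ together with all vertices whose link at $v$ is incompatible with extending $S$ to an independent set. The process halts once either $|S|$ reaches $\lceil spv(\mathcal{G})\rceil$ or the edge density of $\mathcal{G}[A]$ has dropped below a fixed threshold, at which point we set $R:=A\cup S$. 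A standard invariant of this construction is that the algorithm's decisions depend only on $\mathcal{G}[A]$ and on which of the currently-selected vertices lie in $I$; hence re-running the algorithm with only $S$ as input reproduces the same $A$, so $R$ is determined by $S$ alone.

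The main work is the size bound $|R|\leq(1-\delta\lambda^{-1})v(\mathcal{G})$. The argument tracks a potential function proportional to $e(\mathcal{G}[A])$ and shows that at every step in which a ``heavy'' vertex is selected, either $A$ shrinks by a macroscopic amount or the potential drops by a definite multiplicative factor. The codegree hypothesis $\Delta_\ell(\mathcal{G})\leq \lambda\cdot p^{\ell-1}e(\mathcal{G})/v(\mathcal{G})$ for all $\ell\in[s]$ is precisely what guarantees that such a heavy vertex exists in $A$ until the total amount removed reaches $\delta\lambda^{-1}v(\mathcal{G})$; the factor $\lambda^{-1}$ measures the slack between the typical degree of $\mathcal{G}$ and the codegree cap, while the factor $\delta=2^{-s(s+1)}$ absorbs the compounding losses incurred when one reduces the $s$-uniform estimates to the graph case by iteratively passing to link hypergraphs.

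The main obstacle will be calibrating this recursion across the $s$ different codegree scales: the scoring function used by the algorithm has to combine the bounds at each uniformity $\ell$ so that a single vertex selection makes simultaneous progress on all of them, and the ``pruning'' step must be arranged so that each removed vertex pays for itself in the potential drop, with losses summable to a geometric constant depending only on $s$. Once this accounting is in place, the fingerprint-size bound is immediate from the stopping rule, the container-size bound follows from the potential-decrease argument, and counting fingerprints gives the claimed bound on $|\mathcal{C}|$.
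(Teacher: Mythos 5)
There is a genuine gap. The statement you are asked to prove is not proved in the paper at all: it is quoted verbatim as Corollary 2.8 of Janzer--Sudakov, which is itself a repackaging of the Balogh--Morris--Samotij hypergraph container theorem. Your proposal correctly identifies that method (fingerprint $S(I)$ of size at most $spv(\mathcal{G})$, container $R(I)$ determined by $S(I)$, counting fingerprints to bound $|\mathcal{C}|$ by $v(\mathcal{G})^{spv(\mathcal{G})}$), but everything that makes the lemma true is left as a promise. The entire quantitative content --- that one can design a selection/scoring rule and a pruning rule so that (i) the algorithm's output depends only on $S$, (ii) the fingerprint stays below $spv(\mathcal{G})$, and (iii) the final container has size at most $(1-\delta\lambda^{-1})v(\mathcal{G})$ with the specific constant $\delta=2^{-s(s+1)}$, using exactly the hypothesis $\Delta_\ell(\mathcal{G})\leq \lambda p^{\ell-1}e(\mathcal{G})/v(\mathcal{G})$ for all $\ell\in[s]$ --- is precisely the multi-page induction-on-uniformity argument of Balogh--Morris--Samotij, and you explicitly defer it (``the main obstacle will be calibrating this recursion across the $s$ different codegree scales''). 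Asserting that a potential-function accounting ``once in place'' gives the bounds is not a proof of those bounds; in particular nothing in your sketch explains where $2^{-s(s+1)}$ or the factor $\lambda^{-1}$ in the container deficit actually comes from, beyond saying they ``absorb'' and ``measure'' losses.

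There are also local soft spots: the pruning step ``remove all vertices whose link at $v$ is incompatible with extending $S$ to an independent set'' is not what the container algorithm does (removal is governed by degree thresholds in the current link hypergraphs, not by logical incompatibility, and the determinism invariant has to be checked against that specific rule), and the stopping rule you state (halt when the density of $\mathcal{G}[A]$ drops below a threshold) does not by itself yield $|R|\leq(1-\delta\lambda^{-1})v(\mathcal{G})$ --- one needs the argument that low density forces many vertices to have been deleted, which again is the missing core. As written, the proposal is an outline of the standard approach with the theorem's actual content unproven; the honest routes are either to cite the container theorem (as the paper does) and deduce this formulation from it, or to carry out the full BMS-style induction with explicit constants.
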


Let $\mathcal{H}$ be the $s$-uniform hypergraph on vertex set $X$ in which $s$ vertices form a hyperedge if they induce a copy of $F$ in $H$. The next lemma shows that if $H$ is nice, then a suitable subgraph of~$\mathcal{H}$ (chosen with the help of Lemma \ref{lem:thereisgoodscale}) satisfies the codegree conditions in Lemma \ref{lem:BMScontainer} with small values of $\lambda$ and $p$.

\begin{lemma} \label{lem:bounded degree}
    Assume that $H$ is nice. Then for each $U\subset X$ of size at least $500s^2q^2$ there exists a subgraph $\mathcal{G}$ of $\mathcal{H}[U]$ (on vertex set $U$) which satisfies
    \begin{equation}
        \Delta_{\ell}(\mathcal{G})\leq \lambda\cdot p^{\ell-1}\frac{e(\mathcal{G})}{v(\mathcal{G})} \label{eqn:bounded codegrees}
    \end{equation}
    for every $\ell\in [s]$ with $\lambda=O_s(\log q)$ and $p\leq |U|^{-1}q^{2-1/(s-1)}$.
\end{lemma}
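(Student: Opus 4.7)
The plan is to restrict attention to only those copies of $F$ in $H$ that arise from ``good'' vertices $y \in Y$ whose random partition $A_1(y),\dots,A_s(y)$ is balanced on $U$, exactly as provided by the niceness hypothesis. Concretely, I would first invoke Lemma~\ref{lem:thereisgoodscale} to fix a scale $\gamma \geq |U|/q^2$ and a set $Y_U \subseteq Y$ of size $|Y_U| \geq |U|q/(8\gamma \log q)$ such that $\gamma/(10s) \leq |A_i(y)\cap U| \leq \gamma$ for every $y \in Y_U$ and $i\in[s]$. I would then define $\mathcal{G}$ as the hypergraph on vertex set $U$ whose hyperedges are all $s$-sets $\{v_1,\dots,v_s\}$ with $v_i \in A_i(y)\cap U$ for some $y \in Y_U$. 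By the $C_4$-freeness of $K$ (property~\ref{prop:C4-free} of Proposition~\ref{prop:algebraic graph}), any two vertices of $X$ share a unique common neighbour in $Y$, so the only edges among $v_1,\dots,v_s$ come from this particular $y$; hence each such $s$-set genuinely induces a copy of $F$ in $H$, and different $y \in Y_U$ contribute disjoint hyperedges.

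Counting edges, each $y \in Y_U$ contributes at least $(\gamma/(10s))^s$ hyperedges, giving $e(\mathcal{G}) \geq \Omega_s(|U|q\gamma^{s-1}/\log q)$ and average degree $e(\mathcal{G})/v(\mathcal{G}) = \Omega_s(q\gamma^{s-1}/\log q)$. For codegrees, a single vertex lies in exactly $d_K(v)=q+1$ neighbourhoods $N_K(y)$, each contributing at most $\gamma^{s-1}$ hyperedges, so $\Delta_1(\mathcal{G}) \leq (q+1)\gamma^{s-1}$; and for $\ell \geq 2$, the $C_4$-freeness of $K$ ensures that any $\ell$-set has at most one common neighbour in $Y$, contributing at most $\gamma^{s-\ell}$ hyperedges, so $\Delta_\ell(\mathcal{G}) \leq \gamma^{s-\ell}$.

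Plugging these into the required inequality, the $\ell=1$ case gives $\Delta_1 \cdot v(\mathcal{G})/e(\mathcal{G}) = O_s(\log q)$, which forces $\lambda = O_s(\log q)$. For $\ell \geq 2$, the inequality reduces algebraically to $(p\gamma)^{\ell-1} \geq \Omega_s(1/q)$, which is satisfied by taking $p = |U|^{-1}q^{2-1/(s-1)}$: since $\gamma \geq |U|/q^2$ we have $p\gamma \geq q^{-1/(s-1)}$, so $(p\gamma)^{\ell-1} \geq q^{-(\ell-1)/(s-1)} \geq q^{-1}$ for every $\ell \leq s$, and any constant slack is absorbed into $\lambda$. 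I expect the main obstacle to be the tight case $\ell = s$: here both $\Delta_s \leq 1$ and the required lower bound are essentially saturated, and it is precisely this case that dictates the specific exponent $2-1/(s-1)$ in $p$, which in turn propagates to the exponent appearing in Theorem~\ref{thm:cliquefree vs clique precise}. Without the $C_4$-freeness of $K$ to control codegrees at levels $\ell \geq 2$, one could not obtain the clean bound $\Delta_\ell \leq \gamma^{s-\ell}$, and the whole parameter choice would collapse.
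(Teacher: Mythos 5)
Your proposal is correct and follows essentially the same approach as the paper's proof: invoke the niceness property (Lemma \ref{lem:thereisgoodscale}) to fix a scale $\gamma$ and a large set $Y_U$ of balanced $y$'s, define $\mathcal{G}$ from those $y$'s, use $C_4$-freeness of $K$ to obtain the codegree bounds $\Delta_1 \leq (q+1)\gamma^{s-1}$ and $\Delta_\ell \leq \gamma^{s-\ell}$ for $\ell\geq 2$, and then verify (\ref{eqn:bounded codegrees}) with $\lambda = O_s(\log q)$ against the average degree $e(\mathcal{G})/v(\mathcal{G}) = \Omega_s(q\gamma^{s-1}/\log q)$. The only cosmetic difference is that the paper sets $p = (\gamma q^{1/(s-1)})^{-1}$ and then notes $p \leq |U|^{-1}q^{2-1/(s-1)}$, whereas you take $p$ equal to that upper bound directly; both choices satisfy the required inequalities.
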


\begin{proof}
    Since $H$ is assumed to be nice, there exists some $\gamma\geq |U|/q^2$ such that the number of $y\in Y$ with $\gamma/(10s)\leq|A_i(y)\cap U|\leq \gamma$ for all $i\in [s]$ is at least $|U|q/(8(\log q)\gamma)$. Let $p=(\gamma q^{\frac{1}{s-1}})^{-1}\leq |U|^{-1}q^{2-1/(s-1)}$. Let $E(\mathcal{G})$ consist of all $s$-sets $\{x_1,x_2,\dots,x_s\}$ in $U$ for which there exists $y\in Y$ with $\gamma/(10s)\leq|A_i(y)\cap U|\leq \gamma$ and $x_i\in A_i(y)\cap U$ for all $i\in [s]$. Clearly, such vertices $x_1,x_2,\dots,x_s$ induce a copy of $F$ in $H$, so $\mathcal{G}$ is indeed a subgraph of $\mathcal{H}$.

    It remains to verify the codegree condition (\ref{eqn:bounded codegrees}).  Roughly speaking, the codegrees are small because for any set $S$ of at least two vertices in $U$, there is at most one vertex $y\in Y$ in the common neighbourhood of $S$ (since $K$ is $C_4$-free), and then all hyperedges in $\mathcal{G}$ containing $S$ live entirely in $N_K(y)$. More precisely, as we are only using those vertices $y\in Y$ to define hyperedges in $\mathcal{G}$ which satisfy $|A_i(y)\cap U|\leq \gamma$ for all $i$, we have $\Delta_{\ell}(\mathcal{G})\leq \gamma^{s-\ell}$ for each $2\leq \ell\leq s$. 
    Moreover, as $d_K(x)=q+1$ for all $x\in X$, we have $\Delta_1(\mathcal{G})\leq (q+1)\gamma^{s-1}$.

    On the other hand, note that $e(\mathcal{G})\geq \frac{|U|q}{8(\log q)\gamma}\cdot (\frac{\gamma}{10s})^s=\Omega_s(|U|q\gamma^{s-1}/\log q)$, so $e(\mathcal{G})/v(\mathcal{G})=\Omega_s(q\gamma^{s-1}/\log q)$. It follows that if $\lambda=C\log q$ for a sufficiently large constant $C=C(s)$, then $\lambda\cdot p^{\ell-1}\frac{e(\mathcal{G})}{v(\mathcal{G})}\geq 2q^{1-(\ell-1)/(s-1)}\gamma^{s-\ell}$. Hence, (\ref{eqn:bounded codegrees}) holds for each $1\leq \ell\leq s$.
\end{proof}

Combining Lemma \ref{lem:BMScontainer} and Lemma \ref{lem:bounded degree}, we prove the following result.

\begin{lemma} \label{lem:container}
    Let $q$ be sufficiently large and assume that $H$ is nice. Let $U$ be a subset of $X$ of size at least $500s^2q^2$. Now there exists a collection $\mathcal{C}$ of at most $(q^4)^{sq^{2-1/(s-1)}}$ sets of size at most $(1-\Omega_s((\log q)^{-1}))|U|$ such that for any $F$-free (in $H$) set $T\subset U$ there exists some $R\in \mathcal{C}$ with $T\subset R$.
\end{lemma}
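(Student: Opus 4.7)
The plan is to combine Lemmas~\ref{lem:bounded degree} and \ref{lem:BMScontainer} in the most direct way possible: first use Lemma~\ref{lem:bounded degree} to extract from $\mathcal{H}[U]$ a subhypergraph $\mathcal{G}$ on the same vertex set whose codegrees satisfy the balance condition (\ref{eqn:bounded codegrees}) required as input to the container lemma, then feed $\mathcal{G}$ into Lemma~\ref{lem:BMScontainer} and read off the parameters.

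More precisely, since $H$ is nice and $|U|\geq 500s^2q^2$, Lemma~\ref{lem:bounded degree} produces $\mathcal{G}\subseteq \mathcal{H}[U]$ on vertex set $U$ with $v(\mathcal{G})=|U|$, satisfying (\ref{eqn:bounded codegrees}) for all $\ell\in[s]$ with parameters $\lambda=O_s(\log q)$ and $p\leq |U|^{-1}q^{2-1/(s-1)}$. Now apply Lemma~\ref{lem:BMScontainer} to $\mathcal{G}$. Any $F$-free (in $H$) set $T\subseteq U$ is, in particular, independent in $\mathcal{H}[U]$, and hence independent in its subhypergraph $\mathcal{G}$. Therefore the container lemma yields a family $\mathcal{C}$ of sets such that every such $T$ is contained in some $R\in\mathcal{C}$.

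It remains to verify the quantitative bounds. The number of containers is at most
\[
v(\mathcal{G})^{sp\,v(\mathcal{G})} \;=\; |U|^{sp|U|} \;\leq\; |U|^{s q^{2-1/(s-1)}} \;\leq\; (q^4)^{s q^{2-1/(s-1)}},
\]
using $p|U|\leq q^{2-1/(s-1)}$ and $|U|\leq |X|=q^4-q^3+q^2\leq q^4$. Each container has size at most
\[
(1-\delta\lambda^{-1})\,|U| \;=\; \bigl(1-\Omega_s((\log q)^{-1})\bigr)\,|U|,
\]
since $\delta=2^{-s(s+1)}$ depends only on $s$ while $\lambda=O_s(\log q)$.

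The only mild technical point is the integrality requirement in Lemma~\ref{lem:BMScontainer} that $p\,v(\mathcal{G})$ and $v(\mathcal{G})/\lambda$ be integers; this can be handled by replacing $p$ with $\lceil p|U|\rceil/|U|$ and $\lambda$ with an integer multiple of itself of the form $|U|/\lfloor |U|/\lambda\rfloor$, both of which change the parameters by at most a constant factor (depending on $s$) and therefore do not affect any of the stated asymptotic bounds. I do not anticipate any real obstacle: the work of extracting a balanced, low-degree subhypergraph is already done in Lemma~\ref{lem:bounded degree}, and the container lemma is then applied as a black box.
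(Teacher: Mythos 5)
Your proof is correct and follows essentially the same route as the paper: extract the balanced subhypergraph $\mathcal{G}\subseteq\mathcal{H}[U]$ via Lemma~\ref{lem:bounded degree}, apply the container lemma (Lemma~\ref{lem:BMScontainer}) to $\mathcal{G}$, note $F$-free sets are independent in $\mathcal{G}$, and read off the bounds using $|U|\leq q^4$, $p|U|\leq q^{2-1/(s-1)}$ and $\lambda=O_s(\log q)$. The only addition is your explicit remark about the integrality requirements, which the paper leaves implicit.
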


\begin{proof}
    Choose a hypergraph $\mathcal{G}$ and parameters $\lambda,p$ according to Lemma \ref{lem:bounded degree}. By Lemma \ref{lem:BMScontainer}, there exists a collection $\mathcal{C}$ of at most $|U|^{sp|U|}$ sets of size at most $(1-2^{-s(s+1)}\lambda^{-1})|U|$ such that for every independent set $I$ in $\mathcal{G}$, there exists some $R\in \mathcal{C}$ such that $I\subset R$. The lemma follows by noting that any $F$-free set is an independent set in $\mathcal{G}$, $|U|\leq q^4$, $p\leq |U|^{-1}q^{2-1/(s-1)}$ and $\lambda=O_s(\log q)$.
\end{proof}

\begin{corollary} \label{cor:few sets}
    Let $q$ be sufficiently large and assume that $H$ is nice. Then there is a collection $\mathcal{C}$ of at most $(q^4)^{O_s(q^{2-1/(s-1)}(\log q)^2)}$ sets of size at most $500s^2q^2$ such that for any $F$-free (in $H$) set $T\subset X$ there exists some $R\in \mathcal{C}$ such that $T\subset R$.
\end{corollary}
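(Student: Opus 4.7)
The plan is a standard iterative application of Lemma \ref{lem:container}. Start with the trivial single-container family $\mathcal{C}^{(0)} = \{X\}$, noting that $|X| \leq q^4$ by Proposition \ref{prop:algebraic graph}. At each step $j$, partition the current family $\mathcal{C}^{(j)}$ into the ``small'' containers (those of size less than $500s^2 q^2$, which are set aside permanently) and the ``large'' ones. For each large container $U \in \mathcal{C}^{(j)}$, apply Lemma \ref{lem:container} to produce at most $N := (q^4)^{sq^{2-1/(s-1)}}$ subcontainers, each of size at most $(1 - c/\log q)|U|$, where $c = c(s) > 0$ is the implicit constant in the $\Omega_s((\log q)^{-1})$ factor. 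Let $\mathcal{C}^{(j+1)}$ be the union of all these subcontainers together with the previously set-aside small containers. Since every $F$-free $T \subseteq U$ is covered by some subcontainer, an easy induction on $j$ shows that every $F$-free $T \subseteq X$ is still covered by some $R \in \mathcal{C}^{(j+1)}$.

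After $k$ iterations, any container that has remained ``large'' throughout has size at most $(1-c/\log q)^k q^4 \leq q^4 \exp(-ck/\log q)$. Choosing $k = k(s) = \Theta_s((\log q)^2)$ with a sufficiently large implicit constant makes this quantity smaller than $500s^2 q^2$, so after at most this many iterations no large containers remain. Taking $\mathcal{C}$ to be the resulting family yields a collection in which every member has size at most $500s^2 q^2$ and every $F$-free $T \subseteq X$ is contained in some $R \in \mathcal{C}$.

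It remains to bound $|\mathcal{C}|$. At each iteration each large container spawns at most $N$ subcontainers, so after $k$ iterations the total number of containers produced is at most $N^k = (q^4)^{sq^{2-1/(s-1)} \cdot k} = (q^4)^{O_s(q^{2-1/(s-1)}(\log q)^2)}$, matching the stated bound. The only mild bookkeeping subtlety is that once a container drops below the threshold $500s^2 q^2$ we must stop refining it (Lemma \ref{lem:container} provides no guarantee in that regime), but since such containers already meet the required size bound, retaining them unchanged is harmless and the overall count is not affected.
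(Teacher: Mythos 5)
Your proposal is correct and essentially identical to the paper's argument: both iterate Lemma \ref{lem:container} starting from $X$, retire containers once they drop below $500s^2q^2$, refine the rest, and stop after $\Theta_s((\log q)^2)$ rounds so that all surviving containers are small, yielding the bound $(q^4)^{O_s(q^{2-1/(s-1)}(\log q)^2)}$ on the total count. The paper phrases the multiplicative blow-up and the geometric size decay as a clean induction on $j$; your phrasing is a little more informal but the underlying bookkeeping is the same.
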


\begin{proof}
    By Lemma \ref{lem:container}, there exists a positive constant $c_s$ such that whenever $U$ is a subset of $X$ of size at least $500s^2q^2$, then there is a collection $\mathcal{D}$ of at most $(q^4)^{sq^{2-1/(s-1)}}$ sets of size at most $(1-c_s(\log q)^{-1})|U|$ such that for any $F$-free set $T\subset U$ there exists some $R\in \mathcal{D}$ with $T\subset R$.

    We prove by induction that for each positive integer $j$ there is a collection $\mathcal{C}_j$ of at most $(q^4)^{jsq^{2-1/(s-1)}}$ sets of size at most $\max\left(500s^2q^2,(1-c_s(\log q)^{-1})^j|X|\right)$ such that for any $F$-free set $T\subset X$ there exists some $R\in \mathcal{C}_j$ with $T\subset R$. Note that, since $|X| \leq q^4$, by choosing $j$ to be a suitable integer of order $\Theta_s((\log q)^{2})$, the corollary follows. The base case $j=1$ is immediate by the first paragraph (applied in the special case $U=X$).

    Let now $\mathcal{C}_j$ be a suitable collection for $j$ and define $\mathcal{C}_{j+1}$ as follows. For each $U\in \mathcal{C}_j$ of size greater than $500s^2q^2$, take a collection $\mathcal{D}(U)$ of at most $(q^4)^{sq^{2-1/(s-1)}}$ sets of size at most $(1-c_s(\log q)^{-1})|U|$ such that for any $F$-free set $T\subset U$ there exists some $R\in \mathcal{D}(U)$ with $T\subset R$. Let $$\mathcal{C}_{j+1}=\{U\in \mathcal{C}_j:|U|\leq 500s^2q^2\}\cup \bigcup_{U\in \mathcal{C}_j:|U|>500s^2q^2} \mathcal{D}(U).$$
    Clearly, $|\mathcal{C}_{j+1}|\leq |\mathcal{C}_j|(q^4)^{sq^{2-1/(s-1)}}\leq (q^4)^{(j+1)sq^{2-1/(s-1)}}$.
    \sloppy Moreover, since every set in $\mathcal{C}_j$ has size at most $\max\left(500s^2q^2,(1-c_s(\log q)^{-1})^j|X|\right)$, it follows that any set in $\mathcal{C}_{j+1}$ has size at most $\max\left(500s^2q^2,(1-c_s(\log q)^{-1})^{j+1}|X|\right)$. Finally, for any $F$-free set $T\subset X$ there exists some $U\in \mathcal{C}_j$ with $T\subset U$ and hence there exists some $R\in \mathcal{C}_{j+1}$ (either $U$ or some element of $\mathcal{D}(U)$) such that $T\subset R$. This completes the induction step and the proof.
\end{proof}

Corollary \ref{cor:few sets} implies that if $q$ is sufficiently large and $H$ is nice, then the number of $F$-free sets of size $t=q^{2-1/(s-1)}(\log q)^{3}$ in $H$ is at most $$(q^4)^{O_s(q^{2-1/(s-1)}(\log q)^{2})}\binom{500s^2q^2}{t}\leq (q^4)^{O_s(q^{2-1/(s-1)}(\log q)^{2})}(q^{1/(s-1)}/\log q)^t\leq (q^{1/(s-1)})^t,$$
proving Lemma \ref{lem:few F-free if nice}.

\subsection{$F$-free induced subgraphs in triangle-free graphs} \label{sec:F vs triangle}

In this subsection, we observe a connection between Problem \ref{problem:F-free triangle-free} and the Zarankiewicz problem for $6$-cycles.

Let $z(n,m,\{C_4,C_6\})$ denote the maximum number of edges in a bipartite graph with $n+m$ vertices which does not contain $C_4$ or $C_6$ as a subgraph. An old result of de Caen and Sz\'ekely~\cite{DSz97} states that $z(n,m,\{C_4,C_6\})=O(n^{2/3}m^{2/3})$ for $n^{1/2}\leq m\leq n^2$. They observed that there are matching constructions for $m=n$, $m=n^{7/8}$, $m=n^{4/5}$ and $m=n^{1/2}$, but that there is some function $h(n)\rightarrow \infty$ such that $z(n,m,\{C_4,C_6\})=o(n^{2/3}m^{2/3})$ holds for $\omega(n^{1/2})\leq m\leq n^{1/2}h(n)$. We note that $h(n)$ comes from an application of the Ruzsa--Szemer\'edi $(6,3)$-theorem \cite{RSz78} and is of order $e^{\Theta(\log^*(n))}$, where $\log^*(n)$ is the iterated logarithm function.

Roughly speaking, we prove that if $z(n,m,\{C_4,C_6\})=\Theta(n^{2/3}m^{2/3})$ for $m\approx n^{1/2}(\log n)^{3/2}$, then $f_{F,K_3}(n)=\Theta_F(\sqrt{n \log n})$ for every triangle-free graph $F$. Note that this would also give a new proof of $R(3,t)=\Theta(t^2/\log t)$.

\begin{proposition}
	For every triangle-free graph $F$, if $c_F$ is sufficiently large, then the following holds. Let $m=c_F n^{1/2}(\log n)^{3/2}$. Assume that there is a $\{C_4,C_6\}$-free biregular bipartite graph with $n+m$ vertices and $\Omega((nm)^{2/3})$ edges. Then $f_{F,K_3}(n)\leq c_F\sqrt{n \log n}$.
\end{proposition}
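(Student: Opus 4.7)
The plan is to let $G$ be the random graph on $X$ obtained by choosing, for each $y \in Y$ independently, a uniformly random balanced bipartition $N_B(y) = L_y \sqcup R_y$ and adding all edges of the complete bipartite graph between $L_y$ and $R_y$ to $G$. I first verify $G$ is triangle-free: a putative triangle $x_1 x_2 x_3$ would assign to each edge $x_i x_j$ a witness $y_{ij} \in Y$ that splits $\{x_i, x_j\}$ between $L_{y_{ij}}$ and $R_{y_{ij}}$. If all three witnesses agree, the triangle would sit inside a single bipartite graph, impossible; if exactly two agree, say $y_{12} = y_{23}$, then the pair $\{x_1, x_3\}$ has both $y_{12}$ and $y_{13}$ as two distinct common $B$-neighbours, giving a $C_4$ in $B$; if all three differ, $x_1 y_{12} x_2 y_{23} x_3 y_{13}$ is a $C_6$ in $B$. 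Each case contradicts the hypothesis on $B$.

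From $|E(B)| = \Omega((nm)^{2/3})$ together with biregularity, one computes $d_X = \Theta(c_F^{2/3} \log n)$ and $d_Y = \Theta(c_F^{-1/3} n^{1/2}/\sqrt{\log n})$. For any $S \subseteq X$ of size $s := c_F \sqrt{n \log n}$, writing $s_y := |N_B(y) \cap S|$, biregularity gives $\sum_y s_y = s d_X = c_F^{2/3} \cdot m$, so the average of $s_y$ is $c_F^{2/3}$, arbitrarily large in $c_F$. For $F = K_2$ the argument is clean: by independence across $y$,
$$\Pr[S \text{ is independent in } G] \le \prod_{y \in Y} \Pr[\text{no edge of } K_{L_y, R_y} \text{ lies in } S] \le \prod_{y \in Y} 2^{\,1 - s_y} = 2^{\,m - s d_X},$$
since $\Pr[\text{no edge from } y \text{ in } S] \le 2\binom{d_Y/2}{s_y}/\binom{d_Y}{s_y} \le 2^{1-s_y}$ when $s_y \le d_Y/2$ and vanishes otherwise. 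As $s d_X - m = (c_F^{2/3} - 1)m \gg s \log n$ for large $c_F$, a union bound over $\binom{n}{s} \le e^{O(s\log n)}$ sets yields zero independent $s$-sets in expectation, recovering Kim's $R(3,k) = \Omega(k^2 / \log k)$ bound in explicit form.

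For general triangle-free $F$, I would adapt the hypergraph container approach of Section~\ref{sec:Ffree sets}: form the $v(F)$-uniform hypergraph $\mathcal{H}$ on $X$ whose hyperedges are the copies of $F$ in $G$, establish codegree bounds on $\mathcal{H}$ (using crucially that any two $x, x' \in X$ share at most one $B$-neighbour by $C_4$-freeness, and that an $F$-copy spans only a controlled number of $y$-witnesses), and invoke a container lemma to bound the number of candidate $F$-free $s$-subsets, ultimately showing this expected count is $o(1)$. The main obstacle is proving the codegree bounds: an $F$-copy in $G$ generally routes through several $y \in Y$ and the random bipartitions at different $y$'s interact non-trivially, so one must argue that the rigid combinatorial structure forced by $\{C_4, C_6\}$-freeness of $B$, together with the concentration of the random bipartitions, forces $F$-copies to be spread out enough for the container machinery to go through. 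This step is analogous to how Section~\ref{sec:Ffree sets} exploits properties~\ref{prop:C4-free}--\ref{prop:subdivisionfree} of the underlying algebraic bipartite graph.
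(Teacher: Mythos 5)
The construction you use — a random \emph{balanced bipartition} of each $N_B(y)$, with a complete bipartite graph placed across it — is exactly the blow-up of $F=K_2$, and your argument works cleanly only for that case, as you yourself note. The gap is that for a general triangle-free $F$ the fix is \emph{not} to keep this $K_2$-construction and graft on a hypergraph container machine; it is to change the construction itself. The paper instead places, in each $N_B(y)$, a \emph{random blow-up of $F$}: the vertices of $N_B(y)$ are assigned independently and uniformly to $v(F)$ classes and a complete bipartite graph is inserted between classes $i$ and $j$ precisely when $ij \in E(F)$. With this modification the argument stays as elementary as your $K_2$ case: for each $u \in B$ with $d_S(u) \ge |F|$, the probability that $S\cap N_B(u)$ misses some class (and hence that $u$'s blow-up contributes no $F$ inside $S$) is at most $\beta_F^{d_S(u)}$ for a constant $\beta_F<1$; these events are independent across $u$, so $\Pr[G[S] \text{ is } F\text{-free}] \le \beta_F^{\Omega(c_F^{2/3} m)}$, and the union bound over $\binom{n}{s}$ sets closes the argument — no containers at all.

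There is also a substantive reason to expect your proposed container route to fail as stated: if $F$ is triangle-free but \emph{non-bipartite} (e.g., $F = C_5$), then no copy of $F$ can live inside a single $K_{L_y,R_y}$, so every $F$-copy in your $G$ must route through at least two (in fact at least three, by a parity count) distinct witnesses $y$. Whether such ``spread'' $F$-copies exist in abundance inside every $s$-set is far from clear and is not forced by $\{C_4,C_6\}$-freeness alone; the interaction of the independent random bipartitions at different $y$'s that you flag as ``the main obstacle'' is a real obstacle, not a technicality. Your triangle-freeness verification and the degree/average computations $d_X = \Theta(c_F^{2/3}\log n)$ and $\sum_y s_y = s\,d_X$ are correct and match the paper; what is missing is simply the observation that one should blow up $F$ rather than $K_2$ at each $y$, after which no container argument is needed.
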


\begin{remark}
	The biregularity assumption can be relaxed. Furthermore, any $C_6$-free graph can be made $C_4$-free by discarding at most half of its edges \cite{Gyori97}, so the same conclusion holds assuming the existence of a suitable $C_6$-free graph.
\end{remark}

\begin{proof}
	Let $H$ be a $\{C_4,C_6\}$-free biregular bipartite graph with $n+m$ vertices and $\Omega((nm)^{2/3})$ edges. Let $A$ be the part of size $n$ in $H$ and let $B$ be the part of size $m$. We define a graph $G$ on vertex set $A$ randomly as follows. For each $u\in B$, place a blow-up of $F$ randomly within $N_H(u)$ (each vertex in $N_H(u)$ is randomly allocated to one of the $|F|$-many sets corresponding to the vertices of $F$). The edge set of $G$ is the union of these random blow-ups of $F$.
	
	First, we claim that $G$ is triangle-free. Indeed, since $H$ is $\{C_4,C_6\}$-free, it is easy to check that any triangle must come from a single blow-up of $F$. But $F$ is triangle-free, so that is not possible.
	
	Now we will show that with positive probability, every set of $s=c_F\sqrt{n \log n}$ vertices in $A$ contains a copy of $F$ in $G$. Fix a set $S\subset A$ of size $s$. Since $H$ is biregular with $\Omega((nm)^{2/3})$ edges, every vertex in $A$ has degree $\Omega(m^{2/3}/n^{1/3})$ in $H$. Hence, $e_H(S,B)\geq \Omega(sm^{2/3}/n^{1/3})=\Omega(c_F^{5/3} n^{1/2} (\log n)^{3/2})=\Omega(c_F^{2/3} m)$.
	
	For a vertex $u\in B$, let $d_S(u)$ denote the number of neighbours of $u$ in $S$ in the graph $H$. Note that $\sum_{u\in B} d_S(u)=e_H(B,S)\geq \Omega(c_F^{2/3} m)$, so (if $c_F$ is large enough)
	$$\sum_{u\in B: d_S(u)\geq |F|} d_S(u)\geq \Omega(c_F^{2/3} m).$$
	
	But for any $u\in B$ with $d_S(u)\geq |F|$, the probability that $G[N(u)\cap S]$ contains no copy of $F$ is at most $\beta_F^{d_S(u)}$ for some constant $\beta_F<1$. Hence, the probability that $G[S]$ is $F$-free is at most $$\prod_{u\in B: d_S(u)\geq |F|} \beta_F^{d_S(u)}\leq \beta_F^{\Omega(c_F^{2/3} m)}.$$
	
	By taking a union bound over all choices of $S$, the probability that there is a set $S\subset A$ of size $s$ such that $G[S]$ is $F$-free is at most $$n^s\beta_F^{\Omega(c_F^{2/3} m)}= 2^{s \log n}\beta_F^{\Omega(c_F^{2/3} m)}=2^{m}\beta_F^{\Omega(c_F^{2/3} m)},$$
	which is less than $1$ provided that $c_F$ is sufficiently large. Hence, $f_{F,K_3}(n)\leq s=c_F \sqrt{n \log n}$.
\end{proof}

\section{Concluding remarks} \label{sec:concluding}

\subsection{Remark about improving some lower bounds in \cite{MV_general_graphs}}
We outline an argument which is implicit in \cite{Sudakov_DRC} and can be used to improve some of the lower bounds for
$f_{F,K_4}(n)$ proved in \cite{MV_general_graphs}. 
The improvement comes from the fact that the proof in \cite{MV_general_graphs} uses that a $K_4$-free graph with average degree $d$ has independence number at least $\sqrt{d}$; this follows by considering a vertex of degree at least $d$ and using the fact that a triangle-free graph with $m$ vertices has independence number at least $\sqrt{m}$. The following proposition gives a better bound in the relevant range of $d$.
\begin{proposition}[\cite{Sudakov_DRC}]\label{prop:DRC}
    Every $n$-vertex $K_4$-free graph with average degree $d \geq n^{2/3}$ contains an independent set of size $\Omega( \frac{d}{n^{1/3}})$.
\end{proposition}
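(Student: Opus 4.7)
The plan is to use dependent random choice combined with a case split on the maximum degree $\Delta = \Delta(G)$. Two structural consequences of $G$ being $K_4$-free will be used throughout: (i) for every vertex $v$, the induced subgraph $G[N(v)]$ is triangle-free, since any triangle in $N(v)$ would form a $K_4$ with $v$; (ii) for every edge $uv \in E(G)$, the common neighbourhood $N(u)\cap N(v)$ is an independent set, since any edge inside it would form a $K_4$ with $u$ and $v$. Together with the elementary fact that a triangle-free graph $H$ has $\alpha(H) \geq \sqrt{|H|}$ (the neighbours of a maximum-degree vertex in a triangle-free graph form an independent set), observation (i) implies $\alpha(G) \geq \sqrt{\Delta}$.

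If $\Delta \geq d^2/n^{2/3}$, we are immediately done, since $\alpha(G) \geq \sqrt{\Delta} \geq d/n^{1/3}$. So suppose $\Delta < d^2/n^{2/3}$. In this regime, apply dependent random choice by sampling a uniformly random edge $uv \in E(G)$. By (ii), $N(u)\cap N(v)$ is independent, and the expected size of this common neighbourhood equals $3T/e(G) = 6T/(nd)$, where $T$ denotes the number of triangles in $G$. Hence $\alpha(G) \geq 6T/(nd)$, and if $T \geq n^{2/3} d^2/6$ we are done.

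The remaining sub-case has both $\Delta < d^2/n^{2/3}$ and $T < n^{2/3} d^2/6$. Here I plan to use the identity $\sum_v e(G[N(v)]) = 3T$: by an averaging argument over the vertices $v$ satisfying $d(v) = \Theta(d)$ (of which there are $\Theta(n)$ by Markov when $\Delta$ is bounded), we can locate a vertex $v$ for which $G[N(v)]$ has at most $O(d^2/n^{1/3})$ edges. The induced subgraph $G[N(v)]$ is then triangle-free on $\Theta(d)$ vertices with average degree $O(d/n^{1/3})$, and plugging this into a classical lower bound on the independence number of triangle-free graphs (of Shearer or Ajtai--Koml\'os--Szemer\'edi type) produces an independent set of size $\Omega(d/n^{1/3})$ inside $N(v)$.

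The main technical obstacle is this third step: one needs the three extremal bounds ($\sqrt{\Delta}$, $6T/(nd)$, and the triangle-free bound applied to $G[N(v)]$) to balance so that the same exponent $1/2 - 1/3 = 1/6$ emerges uniformly across the whole range $d \geq n^{2/3}$. Achieving this balance—rather than merely obtaining the weaker $\sqrt{d}$ bound used in \cite{MV_general_graphs}—is precisely what the dependent random choice argument implicit in \cite{Sudakov_DRC} is designed to accomplish.
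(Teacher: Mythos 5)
Your cases 1 and 2 are sound: $\alpha(G)\geq\sqrt{\Delta}$ via triangle-freeness of neighbourhoods, and $\alpha(G)\geq 3T/e(G)$ since the common neighbourhood of any edge is independent. The difficulty lies entirely in case 3, and it is a real gap, not just a matter of bookkeeping.

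First, the claim that there are $\Theta(n)$ vertices of degree $\Theta(d)$ when $\Delta<d^2/n^{2/3}$ is false. Markov bounds high degrees from above; it gives no lower bound on the count of medium-degree vertices. For $d>n^{2/3}$ the case-3 hypothesis allows $\Delta$ to exceed $d$ by a polynomial factor, and a graph with $\Theta(nd/\Delta)$ vertices of degree $\Theta(\Delta)$ plus isolated vertices has no vertices of degree $\Theta(d)$ at all. Second, and more decisively, even if you locate a vertex $v$ with $d(v)=\Theta(d)$ and $e(G[N(v)])=O(d^2/n^{1/3})$, the triangle-free bound you invoke on $G[N(v)]$ gives an independent set of size $O\left(\frac{d(v)\log D}{D}\right)$ with $D=O(d/n^{1/3})$, namely $O\left(n^{1/3}\log(d/n^{1/3})\right)$. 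This matches the target $d/n^{1/3}$ only around $d=n^{2/3+o(1)}$; for larger $d$ it falls polynomially short. So the three extremal bounds you list genuinely do not balance over the whole range $d\geq n^{2/3}$, and the scheme cannot be repaired by merely tuning the thresholds.

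The actual proof avoids this by applying dependent random choice to a pair of arbitrary (not necessarily adjacent) vertices. One samples $x_1,x_2$ uniformly at random and sets $N=N(x_1)\cap N(x_2)$; convexity gives $\mathbb{E}[|N|]\geq d^2/n$, a set whose size scales quadratically in $d$, which neither a single neighbourhood nor a single co-neighbourhood achieves. The matching upper bound is on $\mathbb{E}[e(N)]$: for each edge $yz$, $N(y)\cap N(z)$ is independent and thus of size at most $\alpha(G)$, so the number of diamonds ($K_4$ minus an edge) is at most $e(G)\alpha(G)^2$, yielding $\mathbb{E}[e(N)]\leq d\,\alpha(G)^2/n$. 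Taking a suitable linear combination of the two expectations fixes an outcome where $|N|$ is large and $e(N)$ is small, and a single application of Tur\'an's theorem to $G[N]$ forces $\alpha(G)=\Omega(d/n^{1/3})$, with no case split. The essential ingredient missing from your proposal is this passage to pairwise common neighbourhoods; that is what produces the $d^2/n$ scaling and lets the diamond count (which is quadratic in $\alpha$) close the loop.
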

\noindent
Note that the bound $\frac{d}{n^{1/3}}$ beats the bound $\sqrt{d}$ whenever $d \gg n^{2/3}$. When trying to prove a lower bound of the form $f_{F,K_4}(n) \geq n^{1/3 + \varepsilon}$, one can assume that the average degree $d$ of the host graph $G$ is at most $n^{2/3+2\varepsilon}$ (because otherwise $\alpha(G) \geq \sqrt{d} \geq n^{1/3+\varepsilon}$). This is part of the proof in \cite{MV_general_graphs}. By instead using Proposition \ref{prop:DRC}, one obtains the stronger $d \leq n^{2/3 + \varepsilon}$, which immediately leads to improved bounds (with the rest of the proof in \cite{MV_general_graphs} remaining the same). For example, one can improve the constant $\frac{1}{100}$ in the bound $f_{C_k,K_4}(n) \geq n^{\frac{1}{3} + \frac{1}{100k}}$ proved in \cite{MV_general_graphs}. Since we think that this may be useful in future works on this topic, we decided to include Proposition \ref{prop:DRC} and its proof. The proof uses the dependent random choice method \cite{FoxSudakov}.

\begin{proof}[Proof of Proposition \ref{prop:DRC}]
Suppose by contradiction that $\alpha := \alpha(G) \leq 0.1\frac{d}{n^{1/3}}$.
Let $K_{2,1,1}$ the diamond graph, i.e., the graph with vertices $x_1,x_2,y,z$ where $x_i,y,z$ is a triangle for $i=1,2$.
For each edge $yz$, the common neighbourhood of $y,z$ is an independent set (because $G$ is $K_4$-free), so it has size at most $\alpha$. Hence, the number of copies of $K_{2,1,1}$ is at most $e(G) \cdot \alpha^2 \leq dn\alpha^2$. Now sample two vertices $x_1,x_2$ uniformly at random and let $N$ be the common neighbourhood of $x_1,x_2$. By convexity, 
$\mathbb{E}[|N|] \geq \frac{d^2}{n}$. On the other hand, by the upper bound on the number of $K_{2,1,1}$'s, we have $\mathbb{E}[e(N)] \leq \frac{nd\alpha^2}{n^2} = \frac{d\alpha^2}{n}$. So by linearity of expectation,
$$
\mathbb{E}\left[ |N| - \frac{d^2}{2n} - 100e(N) \cdot \frac{\alpha n}{d^2} \right] \geq \frac{d^2}{2n} - 100\frac{d\alpha^2}{n} \cdot \frac{\alpha n}{d^2} = \frac{d^2}{2n} - 100\frac{\alpha^3}{d} \geq 0.
$$
So pick a choice of $N$ for which the above is nonnegative, meaning that $|N| \geq \frac{d^2}{2n} \geq \frac{d}{2n^{1/3}} \geq 5\alpha$ (using $d \geq n^{2/3}$) and $e(N) \leq \frac{1}{100\alpha} \cdot \frac{d^2}{n} \cdot |N| \leq \frac{|N|^2}{50\alpha}$. 
By Tur\'an's theorem, $G[N]$ contains an independent set of size at least 
$$
\frac{|N|^2}{2e(N) + |N|} \geq \min\left\{\frac{|N|}{3},\frac{|N|^2}{3e(N)} \right\} > \alpha,$$ a contradiction.
\end{proof}

\subsection{Open problems}
\begin{itemize}
    \item We proved that $n^{1/2 - O(1/\sqrt{t})} \leq f_{K_{t,t},K_4}(n) \leq n^{1/2 - \Omega(1/t)}$, with the lower bound coming from Theorem \ref{thm:Ktt vs K4} and the upper bound from Theorem \ref{thm:cliquefree vs clique precise}. It might be interesting to determine for this problem the correct order of magnitude of the error term in the exponent. 
    \item Another natural question is to estimate $f_{K_{2,t},K_4}(n)$. By an argument along the lines of the proof of Theorem \ref{thm:Ktt vs K4}, using also Proposition \ref{prop:DRC}, one can show that $f_{K_{2,t},K_4}(n) \geq n^{\frac{8}{21}-o_t(1)}$. We believe that it would be interesting to decide whether
$f_{K_{2,t},K_4}(n) \leq O(n^{1/2 - c})$ for some $c > 0$ which is independent of~$t$.
    \item 
    The construction of Mattheus and Verstra\"ete \cite{Mattheus_Verstraete} shows that the bound in Proposition \ref{prop:DRC} is tight (up to polylogarithmic terms) for $d=\Theta(n^{2/3})$. Here an interesting problem is to prove a tight bound for the size of the largest independent set one can guarantee in every $n$-vertex $K_4$-free graph with average degree
    $d = \Theta(n^{\alpha})$ for $\frac{2}{3} < \alpha < 1$. In particular, is there any $\alpha > \frac{2}{3}$ for which the bound given by Proposition \ref{prop:DRC} is tight? 
\end{itemize}

\bibliographystyle{abbrv}
\bibliography{library}

\end{document}